\tikzset{every picture/.style=very thin}
\newcommand\DOI[1]{{\tt DOI:\href{https://doi.org/#1}{#1}}}
\newcommand\arXiv[1]{{\tt arXiv:\href{https://arxiv.org/abs/#1}{#1}}}
\newcommand\ISBN[1]{{\tt ISBN:\href{https://isbnsearch.org/isbn/#1}{#1}}}
\newcommand\link[1]{{\tt \href{#1}{#1}}}
\renewcommand{\geq}{\geqslant}
\renewcommand{\leq}{\leqslant}
\newcommand{\D}{\mathbf{D}}
\newcommand{\PD}{\mathbf{PD}}
\newcommand{\RR}{\mathbb{R}}
\newcommand{\ZZ}{\mathbb{Z}}
\newcommand\eps{\varepsilon}
\newcommand\ov{\overline}
\theoremstyle{plain}
\newtheorem{theorem}{Theorem}[section]
\newtheorem{lemma}{Lemma}[section]
\newtheorem{conjecture}{Conjecture}[section]
\theoremstyle{definition}
\newtheorem*{definition}{Definition}
\theoremstyle{remark}
\newtheorem*{remark}{Remark}
\title{\bfseries The sequence of prime gaps is graphic\footnote{
This work was supported in part by NKFIH (National Research, Development and Innovation Office) Grants
SNN~135643 (PLE \& TRM), K~132696 (PLE \& TRM), K~119528 (GH \& PM), KKP~133819 (PM), FK~135218 (PM),
NSF (National Science Foundation) Grant IIS-1724297 (ZT),
and the MTA R\'enyi Int\'ezet Lend\"ulet Automorphic Research Group (GH \& PM).}}
\author[1]{P\'eter~L.~Erd\H{o}s}
\author[1]{Gergely~Harcos}
\author[2]{Shubha~R.~Kharel}
\author[1]{P\'eter~Maga}
\author[1]{Tam\'as~R\'obert~Mezei}
\author[2]{Zolt\'an~Toroczkai}
\affil[1]{\small Alfr\'ed R\'enyi Institute of Mathematics\ (LERN),\protect\\ Re\'altanoda utca 13--15, H-1053 Budapest, Hungary\protect\\\texttt{<erdos.peter,harcos.gergely,maga.peter,mezei.tamas.robert>@renyi.hu}}
\affil[2]{\small Department of Physics, 225 Nieuwland Science Hall, Notre Dame,
    IN 46556, USA\protect\\\texttt{<skharel,toro>@nd.edu}}
\begin{document}

\maketitle

\centerline{\emph{Dedicated to J\'anos Pintz on the occasion of his 71st and 73rd birthdays}}

\begin{abstract}
Let us call a simple graph on $n\geq 2$ vertices a prime gap graph if its vertex degrees are $1$ and the first $n-1$ prime
gaps. We show that such a graph exists for every large $n$, and in fact for every $n\geq 2$ if we assume the Riemann hypothesis. Moreover, an infinite sequence of prime gap graphs can be generated by the so-called degree preserving growth process. This is the first time a naturally occurring infinite sequence of positive integers is identified as graphic. That is, we show the existence of an interesting, and so far unique, infinite combinatorial object.

\smallskip

\emph{Keywords:} prime gaps, Riemann hypothesis, matching theory, degree-preserving network growth (DPG)
\end{abstract}

\section{Introduction}\label{sec:intro}

\subsection{The problem}

This paper grew out from an empirical observation by one of us (Z.T.): there are large graphs whose vertex degrees are consecutive members of the sequence of prime gaps. Moreover, such graphs can be generated recursively by the so-called degree preserving growth process~\cite{DPG21}. To turn the observation into precise mathematical statements, we introduce the following definition.

\begin{definition} Let $p_n$ denote the $n$-th prime number, and let $p_0=1$. We call a simple graph on $n\geq 2$ vertices a \emph{prime gap graph} if its vertex degrees are $p_1-p_0,\dotsc,p_n-p_{n-1}$.
\end{definition}

\begin{conjecture}[Toroczkai, 2016]\label{conj1}
For every $n\geq 2$, there exists a prime gap graph on $n$ vertices.
\end{conjecture}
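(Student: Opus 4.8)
The plan is to verify the Erdős--Gallai inequalities for the prime gap sequence, sorted into non-increasing order. Write $d_i=p_i-p_{i-1}$ for $1\le i\le n$; then $d_1=d_2=1$, each $d_i$ with $i\ge 3$ is even, and $\sum_{i=1}^n d_i=p_n-1$. For $n\ge 2$ the prime $p_n$ is odd, so this sum is even and the parity condition is automatic. Let $d_{(1)}\ge\dots\ge d_{(n)}$ be the non-increasing rearrangement (with $d_{(n-1)}=d_{(n)}=1$). By the Erdős--Gallai theorem it suffices to prove
\[
  \sum_{i=1}^k d_{(i)}\;\le\;k(k-1)+\sum_{i=k+1}^{n}\min\bigl(d_{(i)},k\bigr),\qquad 1\le k\le n .
\]
For $k=1$ this says that the largest of the first $n$ prime gaps is at most $n-1$, which follows from $p_n\sim n\log n$ together with any power-saving bound $p_i-p_{i-1}\ll p_i^{\theta}$ with $\theta<1$ (Baker--Harman--Pintz unconditionally, Cramér's bound $p_i-p_{i-1}\ll\sqrt{p_i}\log p_i$ under RH).

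For general $k$ I would split the range. Set $G=\max_{i\le n}d_i$ and $M_2=\sum_{i\le n}d_i^2$. If $k\le n^2/M_2$ (roughly), then Cauchy--Schwarz gives $\sum_{i=1}^k d_{(i)}\le\sqrt{kM_2}\le n-k\le\sum_{i>k}\min(d_{(i)},k)$, so the inequality holds. If instead $k\ge G$, then $\min(d_{(i)},k)=d_{(i)}$ for every $i$, the inequality reduces to $2\sum_{i=1}^k d_{(i)}\le k(k-1)+p_n-1$, and since $\sum_{i=1}^k d_{(i)}\le p_n-1$ this holds whenever $k(k-1)\ge p_n-1$, i.e. for all $k\gtrsim\sqrt{p_n}$. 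These two regimes cover every $k\in\{1,\dots,n\}$ as soon as both $G$ and $\sqrt{p_n}$ lie comfortably below $n^2/M_2$. Since $G\ll p_n^{\theta}\ll n^{\theta+o(1)}$ and $\sqrt{p_n}=n^{1/2+o(1)}$, this is implied by a nontrivial second-moment bound $M_2=\sum_{p_i\le x}(p_i-p_{i-1})^2\ll x^{1+\delta}$ with $\theta+\delta<1$: then $n^2/M_2\gg n^{1-\delta}\gg n^{\theta}$. Heath--Brown's estimate provides such a $\delta$ unconditionally; under RH one may take $\theta=\tfrac12+\eps$ and $M_2\ll p_n(\log p_n)^3$, with room to spare.

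The real obstacle is thus analytic rather than combinatorial, and above all it is a question of \emph{effectivity}. The unconditional inputs (Baker--Harman--Pintz, Heath--Brown) are known only for $n$ beyond a threshold that is large or not explicit, which is exactly why one gets the conclusion only for large $n$. Assuming RH, Cramér's gap bound and the second-moment estimate for $p_i-p_{i-1}$ can both be made completely explicit, so the argument above applies for every $n$ exceeding some concrete $n_0$; the finitely many remaining cases $2\le n<n_0$ are then handled by direct construction, e.g. by running the Havel--Hakimi algorithm. A more hands-on alternative, in the spirit of degree-preserving growth, would realize the few large-degree vertices as a multigraph and then absorb the resulting multiple edges by rerouting them through the abundant degree-$2$ vertices produced by small gaps; but this needs the same quantitative fact --- that large prime gaps are rare --- so I expect the Erdős--Gallai route to be the shortest.
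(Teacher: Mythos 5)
Your unconditional, large-$n$ half is essentially the paper's own argument. The paper packages your two-regime verification of the Erd\H{o}s--Gallai inequalities into a single H\"older step (Part (a) of \Cref{thm1}): since $\min(k,d_\ell)\ge 1$, it suffices to have $\sum_{\ell\le k}(2+d_{(\ell)})\le k^2+n$ for all $k$, and the single hypothesis ${\|2+\D\|}_2\le n^{3/4}$ delivers this uniformly via $k^{1/2}n^{3/4}\le\max(k^2,n)$; combined with Heath-Brown's second-moment bound (\Cref{hbtheorem}) this settles graphicality for large $n$ with no maximal-gap input at all, whereas your large-$k$ regime additionally invokes a pointwise gap bound (Ingham/Baker--Harman--Pintz). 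That difference is cosmetic (your coverage of all $k$ does work, since $n^{2}/M_2\gg n^{2/3-o(1)}$ dominates both $G$ and $\sqrt{p_n}$), so this half is sound and matches the paper.

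The genuine gap is in the ``every $n\ge 2$'' half under RH, precisely where you write that the RH gap bound and second moment ``can both be made completely explicit \dots with room to spare'' and that the remaining cases are ``handled by running the Havel--Hakimi algorithm.'' Making the Selberg-type estimate explicit with a usable constant is the analytic core of the paper (\Cref{explicitselberg}, constant $163$, which itself requires the explicit truncated von Mangoldt formula of \Cref{dudeksharpened} and explicit zero-density input); and even after that effort the threshold one reaches is around $p_n\approx 10^{18}$, i.e.\ roughly $2\cdot 10^{16}$ remaining values of $n$. No explicit constants will push $n_0$ into a range where constructing a realization for each smaller $n$ is feasible, so the Havel--Hakimi clean-up step fails as stated. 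The paper closes this range differently: it deduces graphicality of every initial segment by induction from the matching property (\Cref{tm:weak} together with the Vizing-based \Cref{thm2}), and for $p_n<10^{18}$ it verifies a criterion, condition \eqref{delta2}, that depends only on the maximal prime gap below $p_n$, so it can be read off from the Oliveira e Silva--Herzog--Pardi prime-gap record tables together with \Cref{legendrevariant}, with no graph construction at all. To complete your plan you would need both the explicit analytic work and an analogous criterion checkable from gap records on the finite range.
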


\begin{conjecture}[Toroczkai, 2016]\label{conj2}
In every prime gap graph on $n$ vertices, there exist $(p_{n+1}-p_n)/2$ independent edges.
\end{conjecture}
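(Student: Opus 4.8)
The plan is the following. Write $g_k:=p_k-p_{k-1}$, so that a prime gap graph on $n$ vertices is precisely a simple graph $G$ with degree sequence $(g_1,\dots,g_n)$; put $g:=g_{n+1}$ (an even number for $n\geq 2$) and let $\Delta:=\max_{1\leq k\leq n}g_k$ be the largest degree. We must show that the matching number satisfies $\nu(G)\geq g/2$, and I would argue by contradiction via the Tutte--Berge formula $\nu(G)=\tfrac12\bigl(n-\max_{S\subseteq V(G)}(o(G-S)-|S|)\bigr)$, where $o(\cdot)$ counts components of odd order: if $\nu(G)<g/2$, then some vertex set $S$ satisfies $o(G-S)-|S|\geq n-g+1$.

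The next step is to extract structure from such an $S$. Counting the vertices of $G$ against the components of $G-S$ (each component has at least one vertex, each odd component of order $>1$ has at least three) forces $|S|<g/2$, and also forces all but fewer than $\tfrac32 g$ vertices of $G$ to lie in singleton components of $G-S$. Let $I$ be the set of these singletons: it is an independent set with $N_G(I)\subseteq S$, so in particular each vertex of $I$ has degree at most $|S|<g/2$. I would then double-count the edges meeting $I$: since $I$ is independent, $\sum_{v\in I}\deg_G(v)$ is exactly the number of edges between $I$ and $S$, whence $\sum_{v\in I}\deg_G(v)\leq\sum_{w\in S}\deg_G(w)\leq|S|\,\Delta$; on the other hand $\sum_{v\in I}\deg_G(v)=(p_n-1)-\sum_{v\notin I}\deg_G(v)\geq(p_n-1)-\bigl(\tfrac32 g-|S|\bigr)\Delta$, because fewer than $\tfrac32 g-|S|$ vertices lie outside $I$ and each has degree at most $\Delta$. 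Comparing the two estimates gives $p_n-1\leq\tfrac32\,g\,\Delta$.

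This reduces \Cref{conj2} to establishing, for all large $n$, the purely number-theoretic inequality
\[
(p_{n+1}-p_n)\cdot\max_{1\leq k\leq n}(p_k-p_{k-1})\ <\ \tfrac23(p_n-1),
\]
the finitely many small $n$ being settled by direct inspection. Heuristically this has an enormous margin: the average gap below $p_n$ is of size $\log p_n$ and, on Cramér's model, the largest gap below $p_n$ is of size $(\log p_n)^2$, so the left-hand side should be only $(\log p_n)^{3+o(1)}$. The obstacle is that I do not see how to prove it with current technology. The sharpest pointwise bounds, $p_{m+1}-p_m\ll p_m^{21/40}$ unconditionally and $p_{m+1}-p_m\ll p_m^{1/2}\log p_m$ under the Riemann hypothesis, make the left-hand product of size $\gg p_n$ rather than $o(p_n)$; and a second-moment bound such as $\sum_{p_m\leq x}(p_{m+1}-p_m)^2\ll x(\log x)^{O(1)}$ (known under the Riemann hypothesis), applied via the AM--GM inequality to the gap after $p_n$ and the largest gap below $p_n$, only yields a bound of size $p_n(\log p_n)^{O(1)}$, still short of $p_n$ by a power of a logarithm. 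So the genuinely hard case is when a near-extremal gap occurs just after $p_n$ while another near-extremal gap has already occurred among the first $n$ primes; excluding it appears to need either a prime-gap bound of the (super-Riemann) strength $p_{m+1}-p_m\ll\sqrt{p_m}$, or a sharper combinatorial step in place of the wasteful $\sum_{w\in S}\deg_G(w)\leq|S|\,\Delta$ that exploits the fact that very few gaps below $p_n$ are anywhere near maximal, so that the Tutte--Berge barrier $S$ cannot consist solely of near-maximum-degree vertices.
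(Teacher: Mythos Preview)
Your combinatorial reduction via Tutte--Berge is sound, and you have correctly diagnosed where it stalls: the estimate $\sum_{w\in S}\deg_G(w)\leq |S|\,\Delta$ is exactly the step that is too crude, and the inequality $(p_{n+1}-p_n)\cdot\max_{k\leq n}(p_k-p_{k-1})<\tfrac23(p_n-1)$ to which you are led is indeed beyond current technology (it would imply, in particular, $p_{m+1}-p_m=o(\sqrt{p_m})$ along a subsequence of record gaps). So as it stands the proposal is incomplete by your own admission.

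The missing idea is precisely the ``sharper combinatorial step'' you gesture at in your last sentence, and the paper supplies it. Rather than comparing against the single number $\Delta$, introduce a free threshold $\delta$ and control the \emph{total} degree above the threshold. Concretely, the paper proves (via Vizing's theorem rather than Tutte--Berge) that any realization $G$ of $\D$ has a matching of size $d/2$ whenever there exists an integer $\delta\geq 1$ with
\[
\delta d\leq\sum_{d_\ell<\delta}d_\ell-\sum_{d_\ell\geq\delta}d_\ell,
\]
which for $\PD^n$ reads $N(p_{n+1}-p_n)+2\sum_{p_\ell-p_{\ell-1}\geq N}(p_\ell-p_{\ell-1})<p_n$ for some $N$. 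The tail sum is handled by the elementary inequality $\sum_{g_\ell\geq N}g_\ell\leq N^{-1}\sum_\ell g_\ell^2$, after which one only needs $g\cdot\sum_\ell g_\ell^2=o(p_n^2)$. Unconditionally this follows from Ingham's $g\ll p_n^{5/8+\eps}$ together with Heath-Brown's $\sum_{p_\ell\leq x}(p_{\ell+1}-p_\ell)^2\ll x^{4/3+\eps}$, giving a product of size $p_n^{47/24+o(1)}$; under RH the paper instead makes Selberg's bound $\sum_{g_\ell\geq N}g_\ell\ll x\log^2 x/N$ fully explicit and chooses $N\asymp\sqrt{p_n}/\log p_n$. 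Your Tutte--Berge route can be completed the same way: since $|V\setminus I|<\tfrac32 g$, one has $\sum_{v\notin I}\deg_G(v)\leq\tfrac32 gN+\sum_{g_\ell\geq N}g_\ell$ for any $N$, and the same number-theoretic input then yields the contradiction $p_n-1\leq 2\sum_{v\notin I}\deg_G(v)=o(p_n)$. The point you missed is that an $L^2$ (or Selberg-type) gap bound should be fed into the combinatorics through a threshold parameter, not squeezed through AM--GM against the single maximal gap.
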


In fact, as will become clear in the next subsection, \Cref{conj2} implies \Cref{conj1}. By combining techniques from analytic number theory and matching theory, we are able to almost fully settle these conjectures.

\begin{theorem}\label{maintheorem}
Conjectures~\ref{conj1} and~\ref{conj2} are true for every sufficiently large $n$. Assuming the Riemann hypothesis, they are true for every $n\geq 2$.
\end{theorem}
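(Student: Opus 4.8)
The plan is to derive \Cref{conj1} and the infinite sequence from \Cref{conj2} by the degree preserving growth (DPG) process, and to prove \Cref{conj2} itself by matching theory fed with second-moment information on prime gaps.

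\textbf{The DPG reduction.} For $n\geq 2$ the prime $p_n$ is odd, hence $p_{n+1}-p_n=2m$ is even. Given a prime gap graph $G$ on $n$ vertices and a matching $M\subseteq E(G)$ of size $m$, delete the edges of $M$ and add one new vertex joined to the $2m$ endpoints of $M$: the result is a simple graph on $n+1$ vertices in which every old vertex has its old degree (it either lost an edge of $M$ and gained an edge to the new vertex, or was untouched) while the new vertex has degree $2m=p_{n+1}-p_n$; this is a prime gap graph on $n+1$ vertices and is precisely one DPG step. Starting from $K_2$ — the unique prime gap graph on $2$ vertices, since $p_1-p_0=p_2-p_1=1$ — and iterating, \Cref{conj2} for all sizes below $n$ yields \Cref{conj1} for all $2\leq n'\leq n$ and an infinite DPG-generated sequence $G_2,G_3,\dots$. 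Because \Cref{conj2} is vacuous for any $n$ for which the degree sequence is not graphic, graphicality must still be proved on its own; I would do so from the Erd\H{o}s--Gallai criterion, using the same arithmetic input. Under the Riemann hypothesis the estimates below are generous enough to settle everything for all $n\geq 2$ once the finitely many small cases are checked directly; unconditionally they settle it for all sufficiently large $n$.

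\textbf{Graphicality.} Let $d_{(1)}\geq\dots\geq d_{(n)}$ be the non-increasing rearrangement of the first $n$ prime gaps; their sum is $p_n-1$, even for $n\geq 2$. The Erd\H{o}s--Gallai inequality at index $k$ reads $\sum_{i\leq k}d_{(i)}\leq k(k-1)+\sum_{i>k}\min(d_{(i)},k)$, and since each gap is at least $1$ its right-hand side is at least $k(k-1)+(n-k)=k^2-2k+n$; so it suffices to prove $\sum_{i\leq k}d_{(i)}\leq k^2-2k+n$ for every $1\leq k\leq n$. For $k\geq\sqrt{p_n}$ this follows from $\sum_{i\leq k}d_{(i)}\leq p_n-1$ and $n\sim p_n/\log p_n\gg\sqrt{p_n}$ (prime number theorem). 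For $k<\sqrt{p_n}$ one uses Cauchy--Schwarz, $\sum_{i\leq k}d_{(i)}\leq\bigl(k\sum_{i\leq n}d_{(i)}^2\bigr)^{1/2}$, together with a mean-square bound $\sum_{i\leq n}d_{(i)}^2=\sum_{i=1}^{n}(p_i-p_{i-1})^2\ll p_n^{\kappa}$ — valid with some $\kappa<3/2$ unconditionally (known mean-square estimates for prime gaps, going back to Heath-Brown) and with $\kappa=1+o(1)$ under the Riemann hypothesis (Selberg) — to get $\sum_{i\leq k}d_{(i)}\ll p_n^{(1/2+\kappa)/2}=o(p_n/\log p_n)=o(n)$, which is far below $k^2-2k+n$.

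\textbf{Matchings.} By the Berge--Tutte formula, \Cref{conj2} amounts to showing that every realization $G$ of the prime gap degree sequence satisfies $o(G-S)\leq n-(p_{n+1}-p_n)+|S|$ for all $S\subseteq V(G)$, where $o$ counts odd components. Put $s=|S|$ and $m=(p_{n+1}-p_n)/2$. If $s\geq m$ this is immediate from $o(G-S)\leq n-s$. If $s<m$, split the components of $G-S$ into the $a$ singletons and the rest; a non-singleton odd component has at least $3$ vertices, so $o(G-S)\leq a+\tfrac13(n-s-a)=\tfrac13(2a+n-s)$, and a short computation reduces the required bound to $a\leq n-3m$. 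Now $G$ has minimum degree $1$, so each singleton of $G-S$ is a vertex all of whose (at least one) edges run to $S$; hence $a\leq\sum_{u\in S}\deg_G(u)\leq\sum_{i\leq s}d_{(i)}\leq\sum_{i\leq m}d_{(i)}$. Since $m=(p_{n+1}-p_n)/2\ll p_n^{\theta}$ with $\theta<1$ (Baker--Harman--Pintz unconditionally; Cram\'er under the Riemann hypothesis), the same Cauchy--Schwarz estimate gives $\sum_{i\leq m}d_{(i)}\ll p_n^{(\theta+\kappa)/2}=o(n)$ as soon as $\theta+\kappa<2$ — true for the admissible exponents, with great room to spare under the Riemann hypothesis.

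\textbf{The main obstacle.} The graph-theoretic skeleton — the DPG step, Erd\H{o}s--Gallai, Berge--Tutte — is routine; the substance is the arithmetic, namely that the $K$ largest prime gaps below $x$ contribute only $o(x)$ to $\sum_{p_k\leq x}(p_{k+1}-p_k)=x+o(x)$ for $K$ as large as the maximal gap. Under the Riemann hypothesis this is comfortable (Cram\'er's $p_{k+1}-p_k\ll\sqrt{p_k}\,\log p_k$ for the pointwise bound, together with an essentially sharp mean square), which is what yields every $n\geq 2$. Unconditionally one is forced to invoke the best available maximal-gap bound and mean-square bound for prime gaps, to verify that the exponents they provide satisfy $\theta+\kappa<2$, and to make the resulting threshold on $n$ explicit; reconciling these analytic estimates with the combinatorial requirements is the crux of the argument.
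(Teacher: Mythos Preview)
Your asymptotic argument is correct and the overall architecture matches the paper's: reduce \Cref{conj1} to \Cref{conj2} via DPG, verify Erd\H{o}s--Gallai using Cauchy--Schwarz together with Heath-Brown's second-moment bound (this is exactly the paper's \Cref{thm1}(a) specialised to $p=2$), and feed the same arithmetic---a maximal-gap exponent $\theta$ and a second-moment exponent $\kappa$ with $\theta+\kappa<2$---into a matching criterion that depends only on the degree sequence.

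Where you genuinely diverge is in the matching criterion. You invoke the Berge--Tutte defect formula and control odd components by bounding the number $a$ of singletons via $a\leq\sum_{u\in S}\deg_G(u)\leq\sum_{i\leq m}d_{(i)}$. The paper instead proves a lemma (\Cref{thm2}) from Vizing's theorem: delete the vertices of degree $\geq\delta$, edge-colour the remainder with $\delta$ colours, and pigeonhole a large colour class. Both routes yield a sufficient condition on the degree sequence alone and hence apply to \emph{every} realisation; both digest the same arithmetic input. Your Berge--Tutte argument is a pleasant alternative and arguably more elementary, while the Vizing route packages the criterion more cleanly as the single inequality $\delta d\leq\sum_{d_\ell<\delta}d_\ell-\sum_{d_\ell\geq\delta}d_\ell$, which is what the paper actually uses in the explicit RH endgame.

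The one genuine gap is the claim that under RH ``the finitely many small cases are checked directly''. That sentence hides almost all of the paper's technical work. To bring the threshold down to a range where computer verification is feasible, the paper proves an explicit Selberg-type bound $\sum_{x\leq p_\ell\leq 2x,\ p_{\ell+1}-p_\ell\geq N}(p_{\ell+1}-p_\ell)<163\,x\log^2 x/N$ (\Cref{explicitselberg}), which in turn requires an explicit truncated von~Mangoldt formula with error $O^*(5\log x\log\log x)$ (\Cref{dudeksharpened}), careful control of $\zeta'/\zeta$ and zero-counting constants, and then still relies on tabulated prime gaps up to $2\cdot 10^{18}$. Your proposal supplies no explicit constants in either the Cram\'er bound or the RH second moment, so you have not shown that the ``finitely many'' cases lie in any computable range. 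Your closing paragraph correctly identifies this reconciliation as the crux, but then does not carry it out; as written, the proposal establishes the unconditional ``sufficiently large $n$'' half of \Cref{maintheorem} but not the conditional ``every $n\geq 2$'' half.
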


The main input from analytic number theory is an upper bound on the sum of large prime gaps. The main input from matching theory is Vizing's theorem on edge colorings. See Subsection~\ref{newresults} for more details. In the next subsection, we discuss the background and motivation for Theorem~\ref{maintheorem}.

\subsection{Broader context}

Networks are powerful, graph-based representations used in the study of complex systems. They appear in systems ranging from elementary particle interactions, through nucleosynthesis, chemistry, biology (gene interactions, protein interactions, metabolism, physiology),  social sciences (human interactions), infrastructures (transportation, power grid, etc.), ecology (food webs) and climate, to the organization of visible and dark matter in the universe. In this paper we report on a novel family of networks, however, in number theory.

\medskip

In this paper all graphs are \emph{simple}: there are no parallel edges and
loops. The most common characteristic of a graph is its \emph{degree
sequence}: we equip each of the $n$ vertices with a unique label from
$\{1,\dotsc,n\}$, and an integer vector $\D=(d_1,\dotsc,d_n)$ lists the
\emph{degrees} of the corresponding labeled vertices, that is, the number of edges
incident on a given vertex. In chemistry, and in old-fashioned graph theory,
this is called \emph{valency}.

\medskip

The inverse problem is the following: we are given a sequence $\D$ of nonnegative integers,
and we want to know whether there exists a graph with this ensemble as a
degree sequence. When the answer is affirmative, then we call the sequence $\D$
\emph{graphic}. Clearly, if the sequence is graphic, then the sum of its members
must be even. However, it is not self-evident whether a given sequence is graphic. The most well-known characterization of graphic degree sequences is the following theorem:
\begin{theorem}[Erd\H{o}s--Gallai~\cite{EG60}]\label{tm:EG}
Let $d_1\geq\dotsb\geq d_n\geq 0$ be integers. Then the sequence
$(d_1,\dotsc,d_n)$ is graphic if and only if $d_1+\dotsb+d_n$ is
even and for every $k\in\{1,\dotsc,n\}$ we have
\begin{equation}
\sum_{\ell=1}^k d_\ell \leq k(k-1) + \sum_{\ell=k+1}^n \min(k,d_\ell)\;. \label{EG}
\end{equation}
\end{theorem}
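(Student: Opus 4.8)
The plan is to prove the two implications separately: the ``only if'' direction by a short double count, and the ``if'' direction---the substantive one---by an extremal augmentation argument in which the combinatorial conditions \eqref{EG} forbid deficiencies and the parity condition mops up the last case.

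For \textbf{necessity}, suppose a simple graph $G$ on vertices $v_1,\dotsc,v_n$ realizes $(d_1,\dotsc,d_n)$, i.e.\ $\deg_G(v_i)=d_i$. Then $d_1+\dotsb+d_n=2|E(G)|$ is even. Fix $k\in\{1,\dotsc,n\}$, set $S=\{v_1,\dotsc,v_k\}$, and sort the endpoints of edges lying in $S$ according to whether their edge stays inside $S$ or leaves it:
\[
\sum_{\ell=1}^{k} d_\ell \;=\; 2\,|E(G[S])|\;+\;\sum_{j=k+1}^{n}\bigl|N_G(v_j)\cap S\bigr|.
\]
Since $G$ is simple, $|E(G[S])|\le\binom{k}{2}$; and for $j>k$ we have $|N_G(v_j)\cap S|\le k$ (because $|S|=k$) and $|N_G(v_j)\cap S|\le d_j$, hence $|N_G(v_j)\cap S|\le\min(k,d_j)$. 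Substituting gives \eqref{EG}.

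For \textbf{sufficiency}, first observe that \eqref{EG} at $k=1$ reads $d_1\le\sum_{\ell=2}^{n}\min(1,d_\ell)\le n-1$, so the largest prescribed degree is feasible in a simple graph. Call a simple graph $G$ on $\{v_1,\dotsc,v_n\}$ a \emph{subrealization} if $\deg_G(v_i)\le d_i$ for all $i$; these exist (the empty graph). Among all subrealizations maximizing the total degree $\sum_i\deg_G(v_i)$, I would choose one, $G$, for which the least index $r$ with $\deg_G(v_r)<d_r$ is as large as possible (if $G$ has no deficient vertex, it realizes the sequence and we are done). Maximality of the total degree forces every vertex $v_j\neq v_r$ that is non-adjacent to $v_r$ to be saturated, $\deg_G(v_j)=d_j$; in particular $v_1,\dotsc,v_{r-1}$ are saturated. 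Performing total-degree-preserving $2$-switches of the form ``delete $v_\ell v_m$, add $v_r v_\ell$'' (valid when $v_\ell\sim v_m$, $v_\ell\not\sim v_r$, and $v_r,v_\ell,v_m$ are distinct) and invoking the extremal choice of $G$, one brings $G$ into a canonical shape near $v_r$ and pins down which of the remaining vertices have degree $\ge k$ and which have degree $<k$ for the relevant threshold $k$.

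From that structure I expect a clean conclusion: summing $\deg_G$ over an initial segment $\{v_1,\dotsc,v_k\}$ for a well-chosen $k$ (essentially the deficient index $r$ itself, or a nearby value), the saturation to the left of $v_r$ together with one unit of deficiency at $v_r$ produces a count which, combined with \eqref{EG}, forces the total deficiency $\sum_i\bigl(d_i-\deg_G(v_i)\bigr)$ to be at most $1$; the parity hypothesis then rules out the value $1$, so the total deficiency is $0$ and $G$ realizes the sequence, proving the theorem. The hard part will be exactly this step: selecting the threshold $k$ and running the case analysis (tracking how the $2$-switches redistribute the high- and low-degree vertices on either side of $k$) so that the bound on the total deficiency drops out cleanly. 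An alternative route trades the extremal setup for the Havel--Hakimi reduction---replace $(d_1,\dotsc,d_n)$ by the nonincreasing rearrangement of $(d_2-1,\dotsc,d_{d_1+1}-1,d_{d_1+2},\dotsc,d_n)$ and induct on $n$---but then the crux merely migrates to verifying that \eqref{EG} is inherited by the reduced sequence, again a split into the cases $k<d_1+1$ and $k\ge d_1+1$ with care over how rearranging shifts the partial sums.
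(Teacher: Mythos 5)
Your necessity argument is complete and correct: the identity $\sum_{\ell\le k} d_\ell = 2|E(G[S])| + \sum_{j>k}|N_G(v_j)\cap S|$ together with $|E(G[S])|\le\binom{k}{2}$ and $|N_G(v_j)\cap S|\le\min(k,d_j)$ is exactly the standard double count. Note, for calibration, that the paper itself offers no proof of this theorem at all: it is quoted as a classical result with a citation to Erd\H{o}s--Gallai, so the only meaningful comparison is with the known proofs in the literature (e.g.\ the extremal subrealization proof of Tripathi--Venugopalan--West, which your sketch resembles, or the Havel--Hakimi induction you mention as an alternative).

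The genuine gap is in the sufficiency direction, and you have in effect flagged it yourself: the entire content of that direction is the step you defer (``selecting the threshold $k$ and running the case analysis \dots so that the bound on the total deficiency drops out cleanly''). As written, nothing is proved beyond the easy observations that a maximal subrealization exists, that non-neighbours of the deficient vertex $v_r$ are saturated, and that certain $2$-switches preserve the subrealization property. The hard part is precisely to show that the extremal choice of $G$ (maximal total degree, then maximal deficient index $r$) together with inequality \eqref{EG} at a suitable $k$ yields a contradiction whenever $v_r$ is deficient; this requires tracking how a switch ``delete $v_\ell v_m$, add $v_rv_\ell$'' can de-saturate $v_m$ and possibly lower the index $r$, which is exactly where the case analysis lives, and it is omitted. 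Moreover, your announced target --- that the structure forces total deficiency at most $1$, with parity excluding $1$ --- is not how the extremal argument closes (parity already makes the total deficiency even, so ``at most $1$'' is equivalent to ``zero''; the standard proofs instead derive a contradiction with maximality, invoking parity only in one residual configuration), and you give no derivation of that bound. The alternative Havel--Hakimi route suffers the same defect: you state, but do not verify, that \eqref{EG} is inherited by the reduced sequence, which is again the crux. So the proposal is a correct plan for the easy half plus a plausible but unexecuted outline for the substantive half; it does not yet constitute a proof.
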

It is well-understood that there are exponentially
many different \emph{realizations} for almost every graphic degree sequence. At the
same time, the number of all graphic degree sequences is infinitesimal compared
to the number of integer partitions of the sum of the degrees. More precisely, let $2m$ denote the sum of the degrees (as usual), so that the degree sequence is a \emph{partition} of $2m$. By a difficult result of Pittel~\cite{pittel}, as $m$ tends to infinity, the probability of a random partition of $2m$ being graphic is zero in the limit.

\medskip

\Cref{tm:EG} gives us a means to decide whether the degree sequence is graphic.
It is, however, an entirely different problem to actually construct a realization of a graphic degree
sequence. The simplest way to do that is via the Havel--Hakimi algorithm,
which, in turn, is based on the following observation:
\begin{theorem}[Havel~\cite{Hav} and Hakimi~\cite{Hak}]\label{tm:HH}
Let $d_1 > 0$ and $d_2 \geq \dotsb \geq d_n \geq 0$ be integers. Then $(d_1,\dotsc,d_n)$ is graphic if and only if
$(d_2-1,\dotsc,d_{d_1+1}-1,d_{d_1+2},\dotsc,d_n)$ is graphic.
\end{theorem}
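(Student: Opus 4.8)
The plan is to prove the two implications of \Cref{tm:HH} separately: the ``if'' direction is immediate, while the ``only if'' direction is the one with content and will be handled by a maximality-plus-edge-swap argument.

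For the ``if'' direction I would argue as follows. Suppose $H$ is a simple graph on labeled vertices $v_2,\dotsc,v_n$ with $\deg_H v_\ell=d_\ell-1$ for $2\le\ell\le d_1+1$ and $\deg_H v_\ell=d_\ell$ for $d_1+2\le\ell\le n$. Adjoin a new vertex $v_1$ and join it by an edge to each of $v_2,\dotsc,v_{d_1+1}$. The resulting graph is simple, $v_1$ acquires degree $d_1$, each of $v_2,\dotsc,v_{d_1+1}$ has its degree raised back to $d_\ell$, and all other degrees are unchanged; hence $(d_1,\dotsc,d_n)$ is graphic.

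For the ``only if'' direction, I would fix any simple realization of $(d_1,\dotsc,d_n)$ on labeled vertices $v_1,\dotsc,v_n$ and then, among all such realizations, pass to one --- call it $G$ --- that maximizes $\bigl|N_G(v_1)\cap\{v_2,\dotsc,v_{d_1+1}\}\bigr|$. The claim is that in $G$ the vertex $v_1$ is adjacent to precisely $v_2,\dotsc,v_{d_1+1}$; granting this, deleting $v_1$ yields a simple graph realizing the reduced sequence, and we are done. To prove the claim, suppose not. Since $\deg_G v_1=d_1=|\{v_2,\dotsc,v_{d_1+1}\}|$, there is an index $i\in\{2,\dotsc,d_1+1\}$ with $v_i\notin N_G(v_1)$, and a degree count then forces an index $j\in\{d_1+2,\dotsc,n\}$ with $v_j\in N_G(v_1)$. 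Because $2\le i<j\le n$, the ordering hypothesis $d_2\ge\dotsb\ge d_n$ gives $d_i\ge d_j$, hence $|N_G(v_i)|\ge|N_G(v_j)|$; since moreover $v_1\in N_G(v_j)\setminus N_G(v_i)$, comparing $N_G(v_i)\setminus\{v_j\}$ with $N_G(v_j)\setminus\{v_i\}$ produces a vertex $v_k$ with $v_k\in N_G(v_i)$ and $v_k\notin N_G(v_j)\cup\{v_1,v_i,v_j\}$. Now delete the edges $v_1v_j$ and $v_iv_k$ and insert the edges $v_1v_i$ and $v_jv_k$: this is again a simple graph, all degrees are preserved, but $\bigl|N(v_1)\cap\{v_2,\dotsc,v_{d_1+1}\}\bigr|$ has increased by one, contradicting the choice of $G$.

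The routine parts are the degree bookkeeping; the one step that must be executed carefully is the production of the swap vertex $v_k$, where one has to confirm that $v_1,v_i,v_j,v_k$ are four distinct vertices, that $v_1v_j$ and $v_iv_k$ are genuine edges, and that $v_1v_i$ and $v_jv_k$ are genuine non-edges, so that the exchange stays within the class of simple graphs. The only slightly delicate case there is when $v_i$ and $v_j$ happen to be adjacent, which is exactly why one compares the truncated neighborhoods $N_G(v_i)\setminus\{v_j\}$ and $N_G(v_j)\setminus\{v_i\}$ rather than $N_G(v_i)$ and $N_G(v_j)$ directly. Note that this argument never uses that $d_1$ is the largest entry --- only that $d_2,\dotsc,d_n$ are non-increasing --- which is precisely the form of \Cref{tm:HH} stated above.
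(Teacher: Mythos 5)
Your proposal is correct. The paper itself gives no proof of \Cref{tm:HH} --- it is quoted as a classical result with citations to Havel and Hakimi --- so there is no internal argument to compare against; what you have written is the standard proof: the trivial ``if'' direction by attaching a new vertex to $v_2,\dotsc,v_{d_1+1}$, and the ``only if'' direction by choosing a realization maximizing $\bigl|N_G(v_1)\cap\{v_2,\dotsc,v_{d_1+1}\}\bigr|$ and improving it by a $2$-switch. Your handling of the delicate points is right: comparing $N_G(v_i)\setminus\{v_j\}$ with $N_G(v_j)\setminus\{v_i\}$ (noting $v_1$ lies in the second set but not the first, and $d_i\ge d_j$) does produce a legitimate swap vertex $v_k\notin\{v_1,v_i,v_j\}$ even when $v_iv_j$ is an edge, the swap removes $v_1v_j$ with $j\ge d_1+2$ so the maximized quantity strictly increases, and the argument indeed uses only the monotonicity of $d_2\ge\dotsb\ge d_n$, matching the form in which the theorem is stated.
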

Assume we are given a long sequence of natural numbers $\D$ such that every \emph{initial segment} $\D^n$
formed by the first $n$ elements of $\D$ is graphic
(we restrict this notion to $n\geq 2$). This means, in particular, that $d_n<n$ for $n\geq 2$.
We would like to construct a realization $G_n$ for any $n\geq 2$. Clearly, it can be done by separate applications of the
Havel--Hakimi algorithm for every single $n\geq 2$. But this is rather uneconomical: in
principle, for each new segment we have to restart the algorithm from scratch. Instead, we want to
find a \emph{graph growth dynamics (GGD)} such that $G_{n+1}$ can be obtained from $G_n$ rapidly.

\medskip

There are only a few GGDs in the network science literature as most graph construction models are based on static algorithms.
Arguably, the most popular GGDs is the preferential
attachment algorithm of scale-free networks. However, in this and other GGDs, typically, some of the degrees of the vertices in $G_{n+1}$ are bigger than in the degree sequence of $G_n$, and thus, they are unsuitable for our purposes.

\medskip

Recently, a new network growth dynamics has been introduced: the
\emph{degree-preserving network growth (DPG)} model family
(see~\cite{Degree-math} or~\cite{DPG21}). The DPG-process can be
described as follows: let $G$ be a simple graph with degree sequence $\D$. In what follows, by a \emph{matching} we mean a set of independent edges in the graph, that is, a set of pairwise non-adjacent edges. In a
general step, a new vertex $u$ joins $G$ by removing a $\nu$-element
matching of $G$ followed by connecting $u$ to the vertices incident to the $\nu$ removed edges. The degree of the newly inserted vertex is $d=2\nu$. This step does not join two vertices of $G$ that are non-adjacent, furthermore, the
degrees of vertices in $G$ are not changed. The degree sequence of the newly
generated graph is $\D \circ d$, that is, $d$ is concatenated to the
end of $\D$. This graph operation is called a \emph{degree-preserving step (DP-step)}, and the \emph{DPG-process} repeats such steps iteratively.

\medskip

Returning to our long sequence $\D$ of natural numbers: if, for each step $n$, we
can find a matching of size $d_{n+1}/2$ in $G_n$, then the application of a
DP-step provides a realization $G_{n+1}$ of $\D^{n+1}$. In this case, we say that the pair $(G_n,d_{n+1})$ is \emph{DPG-graphic}. One can ask, why should a suitable matching be found in $G_n$? Actually, this is
not inconceivable, as the following theorem shows:
\begin{theorem}[Theorem~2.5 in \cite{Matching}]\label{tm:weak}
Given a graphic sequence $\D$ of length $n$ and an even integer $2\leq d \leq n$, the
sequence $\D \circ d$ is graphic if and only if $\D$ has a realization with a matching of size $d/2$.
\end{theorem}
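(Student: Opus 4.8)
One direction is essentially the DP-step. If $G$ realizes $\D$ and contains a matching $M=\{a_1b_1,\dots,a_\nu b_\nu\}$ with $\nu=d/2$, then deleting the edges of $M$ and adding a new vertex $u$ adjacent to $a_1,b_1,\dots,a_\nu,b_\nu$ yields a simple graph in which $u$ has degree $d$ and every other vertex exchanges at most one $M$-edge for an edge to $u$; hence it realizes $\D\circ d$. The substance is the converse, so the plan is as follows. Assume $\D\circ d$ is graphic and fix a realization $H$ together with a vertex $u$ of $H$ of degree $d$, chosen so that the number of edges inside $N:=N_H(u)$ is as small as possible; note $|N|=d=2\nu$. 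If the complement $\overline{H[N]}$ (a graph on the $2\nu$ vertices of $N$) has a perfect matching $M'$, we are done: the edges of $M'$ join non-adjacent pairs of $N$, so $(H-u)+M'$ is a simple graph in which every vertex of $N$ regains the single edge it lost when $u$ was deleted; thus $(H-u)+M'$ realizes $\D$ and contains the matching $M'$ of size $\nu$.

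It remains to show $\overline{H[N]}$ has a perfect matching. Suppose not; then by the Tutte--Berge theorem there is a set $S\subseteq N$ such that $\overline{H[N]}-S$ has more than $|S|$ odd components, hence at least $|S|+2$ of them by parity. Writing $T=N\setminus S$, the components of $\overline{H[N]}-S=\overline{H[T]}$ are ``co-components'': $H$ contains every edge joining two distinct ones. Take two such components, a vertex $v_1$ in one and $v_2$ in the other with $v_1v_2\in E(H)$, and consider a degree-preserving $2$-switch that deletes $v_1v_2$ together with an edge $xy$ having both ends outside $N\cup\{u\}$ and adds $v_1x,v_2y$ (legal when $v_1x,v_2y\notin E(H)$): it produces another realization of $\D\circ d$ with strictly fewer edges inside $N$, contradicting minimality. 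So no such switch is available, which forces the vertices of $T$ to attach extremely densely: each is adjacent to $u$, to all of $T$ lying in other co-components, and to (almost) all vertices outside $N\cup\{u\}$. Summing these adjacencies gives large lower bounds for the top degrees of $\D$, and substituting them into the Erd\H{o}s--Gallai inequalities~\eqref{EG} for $\D$ produces a contradiction.

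The hard part is this final contradiction. When $S=\emptyset$ the count already yields a vertex of degree $\geq n$ in $\D$, which is impossible; but for $S\neq\emptyset$ the crude estimate only loses a term of size $O(|S|)$ and does not, on its own, violate \eqref{EG}. Closing this gap is the crux: one wants either a secondary extremal criterion (e.g.\ among all minimizers of the edge count inside $N$, take one minimizing $|S|$, or $\sum_{v\in N}\deg_{H[N]}v$) or an extra family of degree-preserving switches that still decreases the edge count inside $N$ when every ``outside'' edge is blocked. Finally, the boundary case $d=n$ (where there are no vertices outside $N\cup\{u\}$, so the switches above are vacuous and the reduced sequence has to be treated directly, for instance through \Cref{tm:HH}), together with a handful of small values of $n$, must be checked by hand; this is routine but genuinely separate from the main argument.
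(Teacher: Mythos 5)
First, note that the paper does not prove \Cref{tm:weak} at all: it is imported verbatim as Theorem~2.5 of \cite{Matching}, so there is no in-paper argument to compare yours against, and your proposal has to stand on its own. The easy direction of your write-up is fine and is exactly the DP-step; the reduction of the hard direction to ``some realization $H$ of $\D\circ d$ has a degree-$d$ vertex $u$ whose non-neighborhood graph $\overline{H[N]}$ contains a perfect matching'' is also a legitimate and natural skeleton.

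The problem is that the skeleton is all there is: the crux is left open, and you say so yourself. Concretely, after invoking Tutte--Berge you only handle (part of) the case $S=\emptyset$; for $S\neq\emptyset$ you admit that the density forced on $T$ loses a term of order $|S|$ and does not contradict \eqref{EG}, and you merely list candidate fixes (a secondary extremal criterion, further switch families) without carrying any of them out. Even the $S=\emptyset$ case is not tight as written: blocking the switch $v_1v_2,xy \to v_1x,v_2y$ for every outside edge $xy$ does not immediately make $v_1$ or $v_2$ adjacent to ``(almost) all'' outside vertices --- outside vertices of degree $0$ in $H-N-u$, or the option of adding $v_1y,v_2x$ instead, have to be argued away before any degree count reaches $n$. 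Finally, the boundary case $d=n$, where no outside edges exist and your switches are vacuous, is deferred rather than done. So what you have is a plausible plan with the decisive step (the extremal contradiction for general $S$, plus $d=n$) missing; as a proof of \Cref{tm:weak} it is incomplete, which is presumably why the present paper simply cites \cite{Matching} rather than reproving the statement.
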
	
Since every initial segment $\D^n$ is graphic, therefore, for each one there
exists a ``special'' realization $G'_n$ with the requested matching. However, it is not automatic that $G'_n = G_n$.
A natural way to deal with this problem is to add the condition that every realization of $\D^n$ has a matching of size $d_{n+1}/2$.

\medskip

We stress that it is not easy to find an infinite, naturally occurring sequence $\D$ whose initial segments are all graphic, or at least graphic beyond a certain point. As a matter of fact, until now we have only known one such example: when all
elements in the sequence are equal. Such a GGD describes an ever growing regular graph sequence.

\medskip

In this paper, we describe for the first time a nontrivial, naturally arising infinite sequence whose initial segments are all graphic. Furthermore, we show that any realization of the initial segments is admissible for the DPG-algorithm. This sequence is the sequence of prime gaps with a prefix $1$:
\[\PD:=(p_1-p_0,p_2-p_1,\dotsc)=(1,1,2,2,4,2,\dotsc)\]
The prefix $1$ was included to guarantee that the sum of the initial segments is
even. The figure below is an illustration of the DPG-process on prime gap graphs. The independent edges used by the DP-steps are red zigzags.

\bigskip

\begin{tikzpicture}
\node at (.2,0) {$\scriptstyle d_1=1$};
\node at (.2,-.5) {$\scriptstyle d_2=1$};
\node at (1,.2) [shape=circle,draw, inner sep=0pt,minimum size=3mm,fill=black!15] {\tiny 1};
\node at (1,-.7) [shape=circle,draw, inner sep=0pt,minimum size=3mm,fill=black!15] {\tiny 2};
\draw [red,snake=zigzag,segment amplitude=1pt,very thin] (1,.05) -- (1,-.55);
\begin{scope}[shift={(2,0)}]
\node at (0,0) {$\scriptstyle d_3=2$};
\draw [->,thin] (-.2,-.3) -- (.2,-.3);
\node at (1,.2) [shape=circle,draw, inner sep=0pt,minimum size=3mm,fill=black!15] (a) {\tiny 1};
\node at (1,-.7) [shape=circle,draw, inner sep=0pt,minimum size=3mm,fill=black!15] (b) {\tiny 2};
\node at (1.5,-.25) [shape=circle,draw, inner sep=0pt,minimum size=3mm,fill=black!15] (c) {\tiny 3};
\draw [red,snake=zigzag,segment amplitude=1pt,very thin] (a) -- (c);
\draw [very thin] (b) -- (c);
\end{scope}

\begin{scope}[shift={(4.5,0)}]
\node at (.2,0) {$\scriptstyle d_4=2$};
\draw [->,thin] (0,-.3) -- (.4,-.3);
\node at (1,.2) [shape=circle,draw, inner sep=0pt,minimum size=3mm,fill=black!15] (a) {\tiny 1};
\node at (1,-.7) [shape=circle,draw, inner sep=0pt,minimum size=3mm,fill=black!15] (b) {\tiny 2};
\node at (1.7,-.25) [shape=circle,draw, inner sep=0pt,minimum size=3mm,fill=black!15] (c) {\tiny 3};
\node at (1.7,.4) [shape=circle,draw, inner sep=0pt,minimum size=3mm,fill=black!15] (d) {\tiny 4};
\draw [red,snake=zigzag,segment amplitude=1pt,very thin] (a) -- (d);
\draw [red,snake=zigzag,segment amplitude=1pt,very thin] (b) -- (c);
\draw [very thin] (c) -- (d);
\end{scope}

\begin{scope}[shift={(7,0)}]
\node at (.2,0) {$\scriptstyle d_5=4$};
\draw [->,thin] (0,-.3) -- (.4,-.3);
\node at (1,.5) [shape=circle,draw, inner sep=0pt,minimum size=3mm,fill=black!15] (a) {\tiny 1};
\node at (1,-.7) [shape=circle,draw, inner sep=0pt,minimum size=3mm,fill=black!15] (b) {\tiny 2};
\node at (1.7,-.5) [shape=circle,draw, inner sep=0pt,minimum size=3mm,fill=black!15] (c) {\tiny 3};
\node at (1.7,.5) [shape=circle,draw, inner sep=0pt,minimum size=3mm,fill=black!15] (d) {\tiny 4};
\node at (1.35,0) [shape=circle,draw, inner sep=0pt,minimum size=3mm,fill=black!15] (e) {\tiny 5};
\draw [very thin] (a) -- (e);
\draw [very thin] (e) -- (d);
\draw [very thin] (e) -- (b);
\draw [very thin] (e) -- (c);
\draw [red,snake=zigzag,segment amplitude=1pt, very thin] (c) -- (d);
\end{scope}

\begin{scope}[shift={(9.5,0)}]
\node at (.2,0) {$\scriptstyle d_6=2$};
\draw [->,thin] (0,-.3) -- (.4,-.3);
\node at (1,.5) [shape=circle,draw, inner sep=0pt,minimum size=3mm,fill=black!15] (a) {\tiny 1};
\node at (1,-.7) [shape=circle,draw, inner sep=0pt,minimum size=3mm,fill=black!15] (b) {\tiny 2};
\node at (1.7,-.5) [shape=circle,draw, inner sep=0pt,minimum size=3mm,fill=black!15] (c) {\tiny 3};
\node at (1.7,.5) [shape=circle,draw, inner sep=0pt,minimum size=3mm,fill=black!15] (d) {\tiny 4};
\node at (1.35,0) [shape=circle,draw, inner sep=0pt,minimum size=3mm,fill=black!15] (e) {\tiny 5};
\node at (2.3,0) [shape=circle,draw, inner sep=0pt,minimum size=3mm,fill=black!15] (f) {\tiny 6};
\draw [very thin] (a) -- (e);
\draw [red,snake=zigzag,segment amplitude=1pt, very thin] (e) -- (d);
\draw [very thin] (e) -- (b);
\draw [very thin] (e) -- (c);
\draw [very thin] (f) -- (d);
\draw [red,snake=zigzag,segment amplitude=1pt, very thin] (f) -- (c);
\end{scope}

\begin{scope}[shift={(12.5,0)}]
\node at (.2,0) {$\scriptstyle d_7=4$};
\draw [->,thin] (0,-.3) -- (.4,-.3);
\node at (1,.5) [shape=circle,draw, inner sep=0pt,minimum size=3mm,fill=black!15] (a) {\tiny 1};
\node at (1,-.7) [shape=circle,draw, inner sep=0pt,minimum size=3mm,fill=black!15] (b) {\tiny 2};
\node at (1.7,-.5) [shape=circle,draw, inner sep=0pt,minimum size=3mm,fill=black!15] (c) {\tiny 3};
\node at (2.2,.5) [shape=circle,draw, inner sep=0pt,minimum size=3mm,fill=black!15] (d) {\tiny 4};
\node at (1.35,0) [shape=circle,draw, inner sep=0pt,minimum size=3mm,fill=black!15] (e) {\tiny 5};
\node at (2.6,.-.4) [shape=circle,draw, inner sep=0pt,minimum size=3mm,fill=black!15] (f) {\tiny 6};
\node at (2,0) [shape=circle,draw, inner sep=0pt,minimum size=3mm,fill=black!15] (g) {\tiny 7};
\draw [very thin] (a) -- (e);
\draw [very thin] (e) -- (g);
\draw [very thin] (e) -- (b);
\draw [very thin] (e) -- (c);
\draw [red,snake=zigzag,segment amplitude=1pt, very thin] (f) -- (d);
\draw [very thin] (g) -- (c);
\draw [very thin] (g) -- (d);
\draw [very thin] (f) -- (g);
\end{scope}
\end{tikzpicture}
 \\

\begin{tikzpicture}
\begin{scope}
\node at (.2,0) {$\scriptstyle d_8=2$};
\draw [->,thin] (0,-.3) -- (.4,-.3);
\node at (1,.5) [shape=circle,draw, inner sep=0pt,minimum size=3mm,fill=black!15] (a) {\tiny 1};
\node at (1,-.7) [shape=circle,draw, inner sep=0pt,minimum size=3mm,fill=black!15] (b) {\tiny 2};
\node at (1.7,-.5) [shape=circle,draw, inner sep=0pt,minimum size=3mm,fill=black!15] (c) {\tiny 3};
\node at (2.2,.5) [shape=circle,draw, inner sep=0pt,minimum size=3mm,fill=black!15] (d) {\tiny 4};
\node at (1.35,0) [shape=circle,draw, inner sep=0pt,minimum size=3mm,fill=black!15] (e) {\tiny 5};
\node at (2.6,.-.4) [shape=circle,draw, inner sep=0pt,minimum size=3mm,fill=black!15] (f) {\tiny 6};
\node at (2,0) [shape=circle,draw, inner sep=0pt,minimum size=3mm,fill=black!15] (g) {\tiny 7};
\node at (3,0) [shape=circle,draw, inner sep=0pt,minimum size=3mm,fill=black!15] (h) {\tiny 8};
\draw [very thin] (a) -- (e);
\draw [very thin] (e) -- (g);
\draw [very thin] (e) -- (b);
\draw [very thin] (e) -- (c);
\draw [very thin] (f) -- (h);
\draw [red,snake=zigzag,segment amplitude=1pt, very thin] (d) -- (h);
\draw [red,snake=zigzag,segment amplitude=1pt, very thin] (g) -- (c);
\draw [very thin] (g) -- (d);
\draw [very thin] (f) -- (g);
\end{scope}

\begin{scope}[shift={(4,0)}]
\node at (.2,0) {$\scriptstyle d_9=4$};
\draw [->,thin] (0,-.3) -- (.4,-.3);
\node at (1,.5) [shape=circle,draw, inner sep=0pt,minimum size=3mm,fill=black!15] (a) {\tiny 1};
\node at (1,-.7) [shape=circle,draw, inner sep=0pt,minimum size=3mm,fill=black!15] (b) {\tiny 2};
\node at (1.7,-.8) [shape=circle,draw, inner sep=0pt,minimum size=3mm,fill=black!15] (c) {\tiny 3};
\node at (2.2,.5) [shape=circle,draw, inner sep=0pt,minimum size=3mm,fill=black!15] (d) {\tiny 4};
\node at (1.35,0) [shape=circle,draw, inner sep=0pt,minimum size=3mm,fill=black!15] (e) {\tiny 5};
\node at (3.3,.-.6) [shape=circle,draw, inner sep=0pt,minimum size=3mm,fill=black!15] (f) {\tiny 6};
\node at (2,0) [shape=circle,draw, inner sep=0pt,minimum size=3mm,fill=black!15] (g) {\tiny 7};
\node at (3,0) [shape=circle,draw, inner sep=0pt,minimum size=3mm,fill=black!15] (h) {\tiny 8};
\node at (2.5,-.8) [shape=circle,draw, inner sep=0pt,minimum size=3mm,fill=black!15] (i) {\tiny 9};
\draw [very thin] (a) -- (e);
\draw [very thin] (e) -- (g);
\draw [red,snake=zigzag,segment amplitude=1pt, very thin] (e) -- (b);
\draw [very thin] (e) -- (c);
\draw [very thin] (f) -- (h);
\draw [red,snake=zigzag,segment amplitude=1pt, very thin] (i) -- (h);
\draw [very thin] (i) -- (d);
\draw [very thin] (g) -- (i);
\draw [very thin] (i) -- (c);
\draw [red,snake=zigzag,segment amplitude=1pt, very thin] (g) -- (d);
\draw [very thin] (f) -- (g);
\end{scope}

\begin{scope}[shift={(9,0)}]
\node at (.2,0) {$\scriptstyle d_{10}=6$};
\draw [->,thin] (0,-.3) -- (.4,-.3);
\node at (1,.5) [shape=circle,draw, inner sep=0pt,minimum size=3mm,fill=black!15] (a) {\tiny 1};
\node at (1,-.7) [shape=circle,draw, inner sep=0pt,minimum size=3mm,fill=black!15] (b) {\tiny 2};
\node at (1.7,-.8) [shape=circle,draw, inner sep=0pt,minimum size=3mm,fill=black!15] (c) {\tiny 3};
\node at (2.5,.5) [shape=circle,draw, inner sep=0pt,minimum size=3mm,fill=black!15] (d) {\tiny 4};
\node at (1.35,0) [shape=circle,draw, inner sep=0pt,minimum size=3mm,fill=black!15] (e) {\tiny 5};
\node at (3.3,.-.8) [shape=circle,draw, inner sep=0pt,minimum size=3mm,fill=black!15] (f) {\tiny 6};
\node at (2,0) [shape=circle,draw, inner sep=0pt,minimum size=3mm,fill=black!15] (g) {\tiny 7};
\node at (3.9,-.2) [shape=circle,draw, inner sep=0pt,minimum size=3mm,fill=black!15] (h) {\tiny 8};
\node at (2.5,-.8) [shape=circle,draw, inner sep=0pt,minimum size=3mm,fill=black!15] (i) {\tiny 9};
\node at (3.3,.2) [shape=circle,draw, inner sep=0pt,minimum size=3mm,fill=black!15] (j) {\tiny 10};
\draw [very thin] (a) -- (e);
\draw [very thin] (e) -- (g);
\draw [very thin] (e) .. controls (1,1) and (2.5,1.2) .. (j);
\draw [very thin] (j) -- (b);
\draw [very thin] (e) -- (c);
\draw [very thin] (f) -- (h);
\draw [very thin] (j) -- (h);
\draw [red,snake=zigzag,segment amplitude=1pt, very thin] (i) -- (j);
\draw [very thin] (i) -- (d);
\draw [very thin] (g) -- (i);
\draw [very thin] (i) -- (c);
\draw [very thin] (j) -- (d);
\draw [very thin] (g) -- (j);
\draw [very thin] (f) -- (g);
\end{scope}
\end{tikzpicture}

\bigskip

The proof that the DPG-process creates an infinite sequence of prime gap graphs incorporates two main ingredients. The first one is a symmetric inequality, which implies the
Erd\H{o}s--Gallai inequalities, and thus provides a practical sufficient condition
for the graphicality of the underlying degree sequence. Moreover, another new inequality implies the DPG-graphicality of a
long sequence of natural numbers. The second ingredient combines classical $L^2$ and $L^\infty$ bounds for prime gaps.
\begin{theorem}\label{thm:prime-gap-graphicness}
If $n$ is sufficiently large, then the initial segment $\PD^n$ is graphic, and for any realization $G_n$ of $\PD^n$, the pair $(G_n, p_{n+1}-p_n)$ is DPG-graphic.
\end{theorem}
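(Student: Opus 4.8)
The plan is to prove the two conclusions essentially in the reverse of the stated order: the matching (DPG-graphicality) assertion is the substantial one, and together with \Cref{tm:weak} it immediately propagates graphicality from one initial segment to the next. Write $d_i=p_i-p_{i-1}$ for $1\le i\le n$, so the entries of $\PD^n$ are $d_1,\dots,d_n$, their sum is $p_n-1$ (even for $n\ge 2$), the number of edges of any realization is $\tfrac12(p_n-1)$, and the largest entry is $\Delta:=\max_{1\le i\le n}d_i$. Two classical facts about prime gaps enter. An $L^\infty$ bound: $p_{n+1}-p_n\ll p_n^{\theta}$ for a fixed $\theta<1$ (e.g.\ $\theta=7/12+\eps$, or $\theta=0.525$ by Baker--Harman--Pintz; on the Riemann hypothesis $p_{n+1}-p_n\ll p_n^{1/2}\log p_n$). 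And an $L^2$ bound: $\sum_{p_m\le x}(p_{m+1}-p_m)^2\ll x^{1+\delta}$ for a fixed $\delta>0$ small enough that $\theta+\delta<1$ (classical; under the Riemann hypothesis any $\delta>0$ works, since $\sum_{p_m\le x}(p_{m+1}-p_m)^2\ll x(\log x)^3$). Via Markov's inequality these combine into the bound on the sum of large prime gaps that drives everything: for every $T\ge1$,
\[
\Sigma(T):=\sum_{\substack{1\le i\le n\\ d_i>T}} d_i\;\le\;\frac1T\sum_{i=1}^{n}d_i^2\;\ll\;\frac{p_n^{1+\delta}}{T}.
\]

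For DPG-graphicality, fix \emph{any} realization $G_n$ of $\PD^n$; we must produce a matching of size $(p_{n+1}-p_n)/2$ in it. Since $\Delta$ may be as large as $p_n^{1/2+o(1)}$, Vizing's theorem applied to $G_n$ directly is far too weak. Instead, pick a threshold $T$, let $H$ be the set of vertices of degree exceeding $T$, and pass to $G_n-H$: its maximum degree is at most $T$, and since deleting $H$ removes at most $\sum_{v\in H}\deg_{G_n}(v)=\Sigma(T)$ edges, it has at least $\tfrac12(p_n-1)-\Sigma(T)$ edges. By Vizing's theorem $G_n-H$ has a proper edge colouring with at most $T+1$ colours, so its largest colour class is a matching of $G_n-H$, hence of $G_n$, of size at least $\bigl(\tfrac12(p_n-1)-\Sigma(T)\bigr)/(T+1)$. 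Thus it suffices to choose $T$ so that
\[
p_n-1-2\Sigma(T)\;\ge\;(p_{n+1}-p_n)(T+1).
\]
Taking $T=p_n^{s}$, the left side is $(1-o(1))p_n$ once $s>\delta$ (so $\Sigma(T)\ll p_n^{1+\delta-s}=o(p_n)$), while the right side is $\ll p_n^{\theta+s}=o(p_n)$ once $s<1-\theta$; as $\theta+\delta<1$, one may take $s=\tfrac12(1-\theta+\delta)$, and the inequality holds for all large $n$. (Under the Riemann hypothesis the simpler choice $T=(\log p_n)^{4}$ works.) This is the step where the ``new inequality implying DPG-graphicality'' is invoked in the form suited to the prime-gap sequence.

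For graphicality of $\PD^n$ itself, one can verify the Erd\H{o}s--Gallai inequalities \eqref{EG} for the nonincreasing rearrangement of $\PD^n$ by hand. Since all but two entries are $\ge2$, the right side of \eqref{EG} is at least $k(k-1)+2(n-k-2)+2\gtrsim 2n$ whenever $2\le k=o(n)$, while the left side --- the sum of the $k$ largest gaps --- is bounded by $k\Delta$ for small $k$, by $\Sigma(k)+k^2$ for intermediate $k$, and by $p_n-1\le k(k-1)$ once $k\gtrsim\sqrt{p_n}$; the $L^\infty$ bound for $\Delta$ and the bound for $\Sigma(\cdot)$ above close the inequality in every range. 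More conceptually, this is where the symmetric sufficient condition for graphicality is used, collapsing the whole family \eqref{EG} to a single inequality that the same prime-gap input settles. (One could also bootstrap: graphicality of one segment $\PD^{n_0}$ together with the matching statement just proved propagates, via \Cref{tm:weak} and the fact that $p_{n+1}-p_n$ is even and $<n$ for large $n$, to graphicality of all later segments --- which is precisely why the DPG process never gets stuck.)

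The genuine obstacle is the number-theoretic balancing in the second step. Deleting the high-degree vertices sacrifices $\Sigma(T)$ edges --- the total size of the large prime gaps up to $p_n$ --- which must stay $o(p_n)$, forcing $T\gg p_n^{\delta}$; simultaneously $T$ must be small enough that $(p_{n+1}-p_n)(T+1)=o(p_n)$, forcing $T\ll p_n^{1-\theta}$. A valid threshold exists only because $\theta+\delta<1$, an inequality that rests on results on primes in short intervals (for $\theta$) and on the classical second-moment bound for prime gaps (for $\delta$), and that holds with only modest room for the best known exponents. Under the Riemann hypothesis this tension evaporates; the graph-theoretic ingredients --- Vizing's theorem, vertex deletion, the Erd\H{o}s--Gallai bookkeeping --- are comparatively routine.
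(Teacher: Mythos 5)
Your proposal is correct and takes essentially the same route as the paper: the paper proves this theorem by verifying the two $p=2$ inequalities of \Cref{thm1} using Ingham's short-interval theorem and Heath-Brown's bound $\sum_{p_\ell\le x}(p_{\ell+1}-p_\ell)^2\ll x^{4/3+o(1)}$, and \Cref{thm1} itself rests on exactly your two devices --- Erd\H{o}s--Gallai for graphicality, and for DPG-graphicality the deletion of vertices of degree above a threshold followed by Vizing's theorem together with a Markov-type bound on the sum of large gaps and \Cref{tm:weak}. The only difference is presentational: you inline the threshold choice $T=p_n^{s}$ with $\delta<s<1-\theta$ and the range-by-range Erd\H{o}s--Gallai check, where the paper abstracts both into the symmetric $L^p$ criterion and then verifies it in two lines.
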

Ultimately, the proof relies on the rarity of zeros possibly violating the Riemann hypothesis. In principle, it allows one to deduce an effective constant beyond which \Cref{thm:prime-gap-graphicness} holds true, but this constant far exceeds the capabilities of computers. However, assuming the Riemann hypothesis, we can reduce the constant significantly and prove the result for all $n\geq 2$. Here the numerics are quite delicate, and for efficiency we depart from the symmetric treatment alluded to above. Instead, we go back to first principles and examine the contribution of large prime gaps more directly. We still need to rely on computational results, but they can either be obtained with very simple computer programs, or found in the literature (e.g.\ prime gap records until $2\cdot 10^{18}$).
\begin{theorem}\label{thm:prime-gap-graphicness-under-RH}
Assume the Riemann hypothesis. Then, for any $n\geq 2$, the initial segment $\PD^n$ is graphic, and for any realization $G_n$ of $\PD^n$, the pair $(G_n, p_{n+1}-p_n)$ is DPG-graphic.
\end{theorem}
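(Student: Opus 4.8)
The plan is to isolate the only real obstacle — that a handful of unusually large prime gaps could, a priori, force \emph{every} realization of $\PD^n$ to have only short matchings — and to show that under the Riemann hypothesis these gaps carry too little total weight to do so. I would first reduce the theorem, for all $n$ beyond a modest explicit bound, to a single realization-independent inequality, and dispatch the finitely many small cases directly. Graphicality of $\PD^n$ then follows by induction: $\PD^2=(1,1)$ is realized by one edge, and whenever $\PD^n$ is graphic the matching claim below yields a realization $G_n$ with a matching of size $(p_{n+1}-p_n)/2$, so a degree-preserving step together with \Cref{tm:weak} produces a realization of $\PD^{n+1}$ — here one only needs $p_{n+1}-p_n$ to be an even integer in $[2,n]$, which is routine. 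Hence the core is the assertion that \emph{for every $n\geq 2$ and every realization $G_n$ of $\PD^n$ there is a matching of size $(p_{n+1}-p_n)/2$} — equivalently, that $(G_n,p_{n+1}-p_n)$ is DPG-graphic — and I would derive it from the inequality
\[
p_n-1-2\Sigma_n(T)\ \geq\ (p_{n+1}-p_n)\,(T+1),\qquad
\Sigma_n(T):=\sum_{\substack{1\leq k\leq n\\ p_k-p_{k-1}>T}}(p_k-p_{k-1}),
\]
required to hold for a suitable integer threshold $T=T_n$.

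The step from this inequality to the matching is where Vizing's theorem comes in. Fix a realization $G_n$ and a threshold $T$, let $H$ be the set of vertices of degree larger than $T$ (this set depends only on the degree sequence), and consider $G_n-H$. Its maximum degree is at most $T$; and since the degrees sum to $\sum_{k\leq n}(p_k-p_{k-1})=p_n-p_0=p_n-1$ while deleting $H$ removes at most $\sum_{v\in H}\deg(v)=\Sigma_n(T)$ edges, it has at least $\tfrac12(p_n-1)-\Sigma_n(T)$ edges. By Vizing's theorem $G_n-H$ has a proper edge colouring with at most $T+1$ colours, so its largest colour class is a matching of size at least $\bigl(\tfrac12(p_n-1)-\Sigma_n(T)\bigr)/(T+1)$, which by the displayed inequality is at least $(p_{n+1}-p_n)/2$. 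Note this uses only the total weight $\Sigma_n(T)$ of the large gaps, never the global maximum degree of $\PD^n$; this is the ``more direct'' accounting of the contribution of large prime gaps.

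It remains to verify the displayed inequality, the small $n$ having been treated directly, and I would do this in two regimes. For $10^{8}\leq p_n\leq 2\cdot 10^{18}$ one invokes the tabulated maximal prime gaps: every gap in this range is at most a few thousand, hence far below $T_n:=\lfloor K\log^3 p_n\rfloor$ for a fixed constant $K$, so $\Sigma_n(T_n)=0$ and the inequality collapses to $p_n-1\geq(p_{n+1}-p_n)(T_n+1)$, which one checks over the finitely many dyadic subranges using the maximal gap recorded in each. For $p_n>2\cdot10^{18}$ one turns to analysis: under the Riemann hypothesis, Schoenfeld's explicit bound $|\psi(x)-x|\leq\frac{1}{8\pi}\sqrt{x}\,\log^2 x$ gives $p_{n+1}-p_n\leq C_1\sqrt{p_n}\log p_n$, while an explicit form of Selberg's mean-square estimate $\sum_{p_{k+1}\leq x}(p_{k+1}-p_k)^2\leq C_2\,x\log^3 x$ gives $\Sigma_n(T_n)\leq C_2\,p_n\log^3 p_n/T_n\leq(2C_2/K)\,p_n$; taking $K$ with $4C_2<K$, the inequality reduces to $p_n\gtrsim 4C_1^2K^2\log^8 p_n$, which holds for $p_n>2\cdot10^{18}$. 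Feeding the matching claim back into the induction completes the proof.

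The hard part is this last regime, and the difficulty is numerical rather than conceptual. One needs fully explicit constants $C_1$ and $C_2$; obtaining $C_2$ means running Selberg's argument — equivalently, an RH-conditional bound for $\int_2^x(\psi(t)-t)^2\,dt$ — while tracking every constant, and one must confirm that $2\cdot10^{18}$ already lies past the effective threshold $\sim 4C_1^2K^2\log^8(\cdot)$. Since $\log^8(2\cdot10^{18})\approx 10^{13}$, this leaves only a couple of orders of magnitude of slack and forces $C_1K$ below a few hundred, so the constants have to be kept under tight control — hence ``the numerics are quite delicate''. The matching bound must also be used in its sharp form: replacing it by the naive Vizing bound with the true maximum degree of $\PD^n$ loses logarithmic factors and breaks the large-$n$ regime. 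Everything else — the direct check for small $n$, the appeal to gap records, the elementary arithmetic around \Cref{tm:weak} — needs only very simple computer programs or facts already in the literature.
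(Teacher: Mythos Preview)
Your plan is structurally identical to the paper's: the same induction via \Cref{tm:weak}, the same Vizing-based matching lemma (your displayed inequality is exactly the paper's \eqref{delta} with $N=T+1$), and the same split into a computational range handled by tabulated maximal gaps and an analytic range handled by an RH-conditional Selberg-type bound together with an explicit Cram\'er-type gap bound.

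The differences are in the parameters, and here the paper's choices are sharper. For $p_n>10^{18}$ the paper takes $N=\lceil\sqrt{p_n}/(3\log p_n)\rceil$ rather than your $T_n\asymp K\log^3 p_n$; this balances the two sides of \eqref{delta} at roughly $p_n/3$ each, so the inequality holds with comfortable slack once one inputs \Cref{cmstheorem} and the paper's own \Cref{explicitselberg} (the constant $163$). By contrast, your small threshold forces the product term $(p_{n+1}-p_n)(T_n+1)\approx C_1K\sqrt{p_n}\log^4 p_n$, and closing at $p_n\approx 2\cdot 10^{18}$ then needs your $L^2$ constant $C_2$ below roughly $60$--$70$; this is not obviously out of reach, but it is tighter than necessary and you would in any case end up proving something equivalent to \Cref{explicitselberg} to get it. For the computational range the paper uses a different trick: it takes $N$ to be one more than the maximal gap so far (so the sum in \eqref{delta} vanishes trivially) and verifies $N(p_{n+1}-p_n)<p_n$ via the elementary \Cref{legendrevariant} (a prime in $[x,x+\sqrt{x}]$ for $x\le 10^{18}$), which avoids the dyadic bookkeeping you propose. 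None of this is a gap in your argument, just a less efficient calibration; switching to the paper's threshold would make your numerics routine.
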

Note that Theorems~\ref{thm:prime-gap-graphicness} and \ref{thm:prime-gap-graphicness-under-RH} are a reformulation of \Cref{maintheorem} in the terminology of combinatorics and network theory. They rely on the core theorems presented in the next subsection, which are of independent interest.

\medskip

\subsection{New results}\label{newresults}

Our first result provides, via two symmetric inequalities, sufficient conditions for 
a given sequence to be graphic and that in every graph realization of the sequence,
there is a matching of a given size.

\begin{theorem}\label{thm1}
Let $\D=(d_1,\dotsc,d_n)$ be a sequence of positive integers such that ${\|\D\|}_1=\sum_{\ell=1}^n d_\ell$ is even.
Let $1<p\leq\infty$ be a parameter.\\
{\normalfont\textbf{Part (a).}} Assume that the following $L^p$-norm bound holds:
\begin{equation}\label{eq:p-norm-1}
{\|2+\D\|}_p\leq n^{\frac{1}{2}+\frac{1}{2p}}.
\end{equation}
Then there is a simple graph $G$ with degree sequence $\D$.\\
{\normalfont\textbf{Part (b).}} Let $G$ be any simple graph with degree sequence $\D$. Assume that $d\geq 2$ is an even integer satisfying
\begin{equation}\label{eq:p-norm-2}
4d^{1-\frac{1}{p}}{\|\D\|}_p\leq{\|\D\|}_1.
\end{equation}
Then the pair $(G,d)$ is DPG-graphic, and consequently $\D\circ d$ is graphic.
\end{theorem}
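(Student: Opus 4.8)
For Part~(a) the plan is to verify the Erd\H{o}s--Gallai inequalities \eqref{EG} and invoke \Cref{tm:EG}; we may assume $d_1\ge\dots\ge d_n$, since a permutation changes neither the $L^p$-norms nor the parity of $\|\D\|_1$. Because each $d_\ell\ge1$, for every $k\in\{1,\dots,n\}$ we have $\sum_{\ell=k+1}^n\min(k,d_\ell)\ge n-k$, so it suffices to prove the cleaner bound
\[
\sum_{\ell=1}^k(d_\ell+2)\le k^2+n\qquad(1\le k\le n),
\]
which rearranges to $\sum_{\ell=1}^kd_\ell\le k(k-1)+(n-k)$. For this, H\"older's inequality gives $\sum_{\ell=1}^k(d_\ell+2)\le k^{1-1/p}{\|2+\D\|}_p$, which by \eqref{eq:p-norm-1} is at most $k^{1-1/p}n^{\frac12+\frac1{2p}}$; then the weighted AM--GM inequality with weight $\alpha=\tfrac12-\tfrac1{2p}\in(0,\tfrac12]$ (so $2\alpha=1-\tfrac1p$ and $1-\alpha=\tfrac12+\tfrac1{2p}$) yields $k^{1-1/p}n^{\frac12+\frac1{2p}}=(k^2)^{\alpha}n^{1-\alpha}\le\alpha k^2+(1-\alpha)n\le k^2+n$. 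Since $\|\D\|_1$ is even, \Cref{tm:EG} then supplies a realization $G$ of $\D$. The endpoint $p=\infty$ is the same, with $\alpha=\tfrac12$ and H\"older read as $\sum_{\ell\le k}(d_\ell+2)\le k\,{\|2+\D\|}_\infty$.

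For Part~(b), unwinding the definitions, the goal is to exhibit in $G$ a matching of size $d/2$; the graphicality of $\D\circ d$ then follows from \Cref{tm:weak} (or directly by performing the DP-step). I would argue by thresholding followed by Vizing's theorem. Set $m={\|\D\|}_1/2=|E(G)|$, fix a real $t\ge1$, let $S=\{v:\deg_G(v)>t\}$ and $G'=G-S$. Then $\Delta(G')\le t$, and since $d_v>t$ forces $d_v<d_v^p/t^{p-1}$ (using $p>1$), we get $|E(G')|\ge m-\sum_{v\in S}d_v\ge m-{\|\D\|}_p^p/t^{p-1}$. By Vizing's theorem $G'$ admits a proper edge colouring with at most $\lfloor t\rfloor+1\le t+1$ colours, so its largest colour class is a matching of $G$ of size at least $|E(G')|/(t+1)$. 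It remains to choose $t$ with $\bigl(m-{\|\D\|}_p^p/t^{p-1}\bigr)/(t+1)\ge d/2$: the choice $t=\bigl(p\,{\|\D\|}_p^p/m\bigr)^{1/(p-1)}$ (the one that optimizes the resulting bound) makes the numerator equal to $m(p-1)/p$, and substituting \eqref{eq:p-norm-2} in the form ${\|\D\|}_p\le m/(2d^{1-1/p})$ bounds $t$ by a fixed multiple of $m/d$, to be compared with $\tfrac{2(p-1)}{p}\cdot\tfrac md$. The comparison succeeds because \eqref{eq:p-norm-2}, combined with the elementary inequality ${\|\D\|}_1\le n^{1-1/p}{\|\D\|}_p$, forces $n\ge 4^{p/(p-1)}d$ (hence $d\le n$), so $m\ge n/2$ is comfortably large relative to $d$. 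The endpoint $p=\infty$ degenerates to $S=\emptyset$: then $\nu(G)\ge m/(\Delta+1)$ with $\Delta={\|\D\|}_\infty$, and \eqref{eq:p-norm-2} in the form $4d\Delta\le2m$ gives $\nu(G)\ge d/2$ at once since $\Delta\ge1$.

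The routine part is Part~(a). The delicate point is the constant bookkeeping in Part~(b): the coefficient $4$ in \eqref{eq:p-norm-2} is dictated by the worst case $p=2$, where $\max_{0<\lambda<1}\lambda^{1/p}(1-\lambda)^{1-1/p}$ equals $\tfrac12$, together with the ``$+1$'' loss in Vizing's bound (harmless here since $m\gtrsim d$); so the choice of the threshold $t$ and the verification that $4d^{1-1/p}{\|\D\|}_p\le{\|\D\|}_1$ is precisely what makes the two competing estimates balance must be carried out with some care, even though every individual inequality used is elementary.
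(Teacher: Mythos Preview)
Your Part~(a) is essentially identical to the paper's proof: both reduce the Erd\H{o}s--Gallai inequalities to $\sum_{\ell\le k}(d_\ell+2)\le k^2+n$ via $d_\ell\ge 1$, apply H\"older, and then bound $k^{1-1/p}n^{1/2+1/(2p)}$ by $k^2+n$ (you via weighted AM--GM, the paper via $\max(k^2,n)$). Nothing to add there.

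For Part~(b) your strategy---delete high-degree vertices, apply Vizing, pigeonhole on colour classes---is exactly the paper's (\Cref{thm2}). The difference is in the bookkeeping. You pick a real threshold $t$ and optimise it to $t=(p\|\D\|_p^p/m)^{1/(p-1)}$, which leaves you needing
\[
\frac{p^{1/(p-1)}}{2^{p/(p-1)}}\cdot\frac{m}{d}\;+\;1\;\le\;\frac{2(p-1)}{p}\cdot\frac{m}{d}
\]
uniformly in $p\in(1,\infty)$. Your justification (``the comparison succeeds because \dots\ $m\ge n/2$ is comfortably large'') is not a proof: you would have to show that the gap $\frac{2(p-1)}{p}-p^{1/(p-1)}2^{-p/(p-1)}$ is positive for every $p>1$ and that its reciprocal never exceeds the lower bound $m/d\ge\tfrac12\cdot4^{p/(p-1)}$ you derived. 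Both are true, but neither is immediate, and your closing paragraph concedes as much.

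The paper sidesteps this entirely. It uses an \emph{integer} threshold $\delta$ with the convention $d_\ell\ge\delta$, so that the surviving subgraph has maximum degree $\le\delta-1$ and Vizing gives exactly $\delta$ colours (no ``$+1$'' to absorb). The two requirements $\delta^{1-p}\|\D\|_p^p\le\tfrac14\|\D\|_1$ and $\delta d\le\tfrac12\|\D\|_1$ then become a search for an integer in the interval $\bigl[(4\|\D\|_p^p/\|\D\|_1)^{1/(p-1)},\,\|\D\|_1/(2d)\bigr]$; since the left endpoint exceeds $1$, such an integer exists whenever the right endpoint is at least twice the left, and that condition is \emph{literally} \eqref{eq:p-norm-2}. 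No residual inequality in $p$, no appeal to $m/d$ being large. If you rework your argument with an integer threshold and split the target into those two separate constraints rather than optimising a single real $t$, the constant~$4$ drops out exactly and the gap closes.
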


Our second result makes explicit a theorem of Selberg~\cite[Th.~2]{Se}.

\begin{theorem}\label{explicitselberg} Assume the Riemann hypothesis. Then, for any $x\geq 2$ and $N>0$, we have
\begin{equation}\label{HBeffective}
\sum_{\substack{x\leq p_\ell\leq 2x\\p_{\ell+1}-p_\ell\geq N}} (p_{\ell+1}-p_\ell) < \frac{163x\log^2 x}{N}.
\end{equation}
\end{theorem}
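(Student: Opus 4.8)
\medskip

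The plan is to run a mean--square (Selberg--type) argument with explicit constants. Write $g_\ell=p_{\ell+1}-p_\ell$ and let $S$ denote the left--hand side of \eqref{HBeffective}. First I would dispose of the two easy ranges of $N$. Under the Riemann hypothesis one has $g_\ell\ll\sqrt{p_\ell}\,(\log p_\ell)^2$, so if $N$ exceeds an absolute multiple of $\sqrt{x}\log^2 x$ then $S=0$ and there is nothing to prove; hence we may assume $N\ll\sqrt{x}\log^2 x$. At the other extreme, $S$ is at most the telescoping sum $\sum_{x\le p_\ell\le 2x}g_\ell$, which equals (the first prime after $2x$) minus (the first prime $\ge x$), hence is $x+O(\sqrt x\log^2 x)<2x$ for large $x$; so if $N$ lies below a suitable absolute multiple of $\log^2 x$, the quantity $163x\log^2 x/N$ already exceeds $S$. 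Small values of $x$ are handled directly, using tabulated maximal prime gaps. What remains is the window $c\log^2 x\le N\le c'\sqrt x\log^2 x$ with $x$ large.

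\medskip

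For that window I would set $z=N/2$ and exploit prime--free intervals. The key point---and the reason one does \emph{not} need $N$ as large as $\sqrt x$---is that in any interval $(u,u+z]$ with $x\le u\le 3x$ the prime powers $p^k$ with $k\ge 2$ contribute only $O\!\bigl((1+z/\sqrt x)\log x\bigr)$ to $\psi(u+z)-\psi(u)$, which is at most $z/2$ once $z$ exceeds a suitable multiple of $\log^2 x$. Consequently, if $(u,u+z]$ contains no prime then $\bigl|\psi(u+z)-\psi(u)-z\bigr|\ge z-\tfrac z2=\tfrac z2$. For each $\ell$ with $x\le p_\ell\le 2x$ and $g_\ell\ge N$, every $u\in[p_\ell,\,p_{\ell+1}-z)$ yields such a prime--free interval; these $u$--ranges are pairwise disjoint and, since $\max g_\ell=o(x)$, contained in $[x,3x]$, and their total length is $\sum_\ell(g_\ell-z)\ge\tfrac12 S$. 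Therefore
\[
\int_x^{3x}\bigl(\psi(u+z)-\psi(u)-z\bigr)^2\,du\ \ge\ \frac{z^2}{4}\cdot\frac{S}{2}\ =\ \frac{z^2 S}{8}.
\]

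\medskip

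The matching upper bound is the explicit form of Selberg's mean--square estimate: under the Riemann hypothesis,
\[
\int_x^{3x}\bigl(\psi(u+z)-\psi(u)-z\bigr)^2\,du\ \le\ C_1\,xz\log^2 x
\]
with an effective constant $C_1$. I would prove this from the truncated explicit formula $\psi(v)=v-\sum_{|\gamma|\le T}v^\rho/\rho+O\!\bigl(v\log^2(vT)/T\bigr)$ (with $T$ a small power of $x$, say $T=x^3$, so the error is negligible), which gives $\psi(u+z)-\psi(u)-z=-\sum_{|\gamma|\le T}\bigl((u+z)^\rho-u^\rho\bigr)/\rho+O(\cdots)$ after the $\log(2\pi)$ and $\tfrac12\log(1-v^{-2})$ terms cancel in the difference; squaring and integrating over $[x,3x]$, the diagonal terms contribute $\ll xz\log(x/z)$ via $\bigl|\bigl((u+z)^\rho-u^\rho\bigr)/\rho\bigr|\le\min\!\bigl(z\,u^{-1/2},\,2(u+z)^{1/2}/|\gamma|\bigr)$ together with an explicit Riemann--von Mangoldt zero count, while the off--diagonal terms are controlled by the oscillatory--integral bound $\bigl|\int_x^{3x}(u+a)^{1+i(\gamma-\gamma')}\,du\bigr|\ll x^2/(1+|\gamma-\gamma'|)$ and a double sum over pairs of zeros. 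Combining the two displays yields $\tfrac{z^2 S}{8}\le C_1 xz\log^2 x$, i.e.
\[
S\ \le\ \frac{8C_1\,x\log^2 x}{z}\ =\ \frac{16C_1\,x\log^2 x}{N},
\]
and the numerics---including the overhead from working on $[x,3x]$ and from the choice $z=N/2$---are to be arranged so that the leading constant stays below $163$.

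\medskip

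I expect the main obstacle to be making the mean--square estimate effective with a small enough constant. This requires an explicit truncated explicit formula, an explicit Riemann--von Mangoldt formula with good numerical constants, and---most delicately---a treatment of the off--diagonal double sum over pairs of zeros, for which a crude termwise estimate loses a logarithm; preserving the $\log^2 x$ shape (and a usable constant) calls for grouping the zeros dyadically by height, or smoothing the truncation. A secondary, more clerical obstacle is certifying the small--$x$ range, where one combines the elementary telescoping bound with known records for maximal prime gaps (e.g.\ those up to $2\cdot10^{18}$) so that only finitely many, readily checked, cases survive.
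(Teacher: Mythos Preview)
Your plan is essentially the paper's: a Selberg mean--square argument, with easy ranges of $N$ and small $x$ disposed of first, then a lower bound on $\int|\psi(u+h)-\psi(u)-h|^2\,du$ from prime--free stretches and an upper bound from the explicit formula. Two technical choices in the paper differ from yours and are what make the constant $163$ attainable, so they are worth flagging.

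First, the paper works with \emph{multiplicative} intervals $[y,(1+\delta)y]$ and integrates over $[x,2x]$, writing $N=4\delta x$; your additive $z=N/2$ over $[x,3x]$ is equivalent in principle but the multiplicative set--up meshes with the Saffari--Vaughan trick the paper uses for the off--diagonal: one introduces an extra average $\int_1^2\int_{xv/2}^{2xv}\cdots\,dy\,dv$, after which the cross term for a pair $(\rho,\rho')$ factors cleanly as $x^{2+\rho-\ov{\rho'}}$ times two elementary rational expressions in $\rho-\ov{\rho'}$. The inner $\rho'$--sum is then evaluated exactly as $\tfrac{1}{\sqrt 6}\Re\,\xi'/\xi(\sqrt6+\rho)$ and bounded via the digamma estimate (\Cref{gammabound}), which is what keeps the double--sum at $O(\delta\log^2 x)$ with a sharp numeric constant. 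Your ``dyadic grouping or smoothing'' would recover the $\log^2 x$ shape but likely with a larger constant; the $\xi'/\xi$ device is the paper's key saving.

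Second, for the endpoint reductions the paper uses the Carneiro--Milinovich--Soundararajan bound $g_\ell<\tfrac{22}{25}\sqrt{p_\ell}\log p_\ell$ (one logarithm, not two); with your Cram\'er--type $\sqrt x\log^2 x$ the window for $N$ widens and the exceptional--$p_\ell$ and small--$x$ bookkeeping deteriorate enough that $163$ may slip away. Finally, the paper needs and proves a sharper truncated explicit formula (\Cref{dudeksharpened}) with error $O^*(5\log x\log\log x)$ rather than the textbook $O(x\log^2(xT)/T)$; with the latter, even at $T\asymp x$, the remainder $\asymp\log^2 x$ is comparable to $\delta y$ in the relevant range $\delta\asymp(\log^2 x)/x$, and the lower--bound step degrades.
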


\begin{remark} The example $x=2$ and $N=2$ shows that this result would become false if we replaced
the constant $163$ by $4$. On the other hand, Cram\'er's model predicts that it can be replaced by $o(1)$ for $x\to\infty$ (cf.\ \cite[\S 1.1]{FGKMT}).
\end{remark}

In order to achieve the good numeric constant $163$, we estimate carefully (among others) the error term in the truncated von Mangoldt formula for the Chebyshev psi function. This result, stated below, makes explicit a theorem of Goldston~\cite{Go}, and simultaneously extends and sharpens a theorem of Dudek~\cite[Th.~1.3]{Du} in the special case relevant for us. Here and later the notation $A=O^*(B)$ stands for $|A|\leq B$.

\begin{theorem}\label{dudeksharpened} For any $z>x>10^{18}$ we have
\[\psi(x) = x-\sum_{|\Im\rho|<z}\frac{x^{\rho}}{\rho} + O^*\bigl(5\log x\log\log x\bigr),\]
where the sum is over the nontrivial zeros of the Riemann zeta function (counted with multiplicity).
\end{theorem}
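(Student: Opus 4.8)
\emph{Proof sketch.} The plan is to derive this from the explicit (von Mangoldt) formula, realised in truncated Perron form. Write $\psi_0$ for $\psi$ with the jump values halved, so $\psi(x)-\psi_0(x)=O^*\!\bigl(\tfrac12\log x\bigr)$, and fix $c=1+\tfrac1{\log x}$. Starting from the truncated Perron integral of $-\tfrac{\zeta'}{\zeta}(s)\tfrac{x^s}{s}$ over $[c-iT,c+iT]$ and pushing the contour leftward (the left boundary receding to $-\infty$, where it contributes nothing), one collects the pole at $s=1$ (residue $x$), the pole at $s=0$ (residue $-\log(2\pi)$, since $\tfrac{\zeta'}{\zeta}(0)=\log(2\pi)$), the trivial zeros (residues summing to $-\tfrac12\log(1-x^{-2})$), and the nontrivial zeros with $|\Im\rho|<T$. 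Thus $\psi(x)-x+\sum_{|\Im\rho|<T}\tfrac{x^\rho}{\rho}$ is a sum of the Perron truncation error, the two horizontal integrals at height $\pm T$, the bounded quantities $\log(2\pi)+\tfrac12\log(1-x^{-2})$ and $\psi(x)-\psi_0(x)$; after which one passes from height $T$ to the prescribed $z$.

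All three nontrivial pieces are controlled by the hypothesis $z>x$. Choosing $T$ within distance $1$ of $z$, the sums $\sum_{|\Im\rho|<T}\tfrac{x^\rho}{\rho}$ and $\sum_{|\Im\rho|<z}\tfrac{x^\rho}{\rho}$ differ by at most $N(z+1)-N(z)\ll\log z$ zeros, each of modulus $<x^{\Re\rho}/|\rho|<x/z\le1$ (as $\Re\rho<1$ always); since $(\log t)/t$ decreases, $(x/z)\log z\le\log x$, so this transition costs $O(\log x)$. For the horizontal integrals, use $\tfrac{\zeta'}{\zeta}(s)=\sum_{|\Im\rho-\Im s|\le1}(s-\rho)^{-1}+O(\log|\Im s|)$ on $-1\le\Re s\le2$ and pick $T\in[z,z+1]$ away from clusters of zeros (an averaging argument over this unit interval, via $N(t+1)-N(t)\ll\log t$), so that $\tfrac{\zeta'}{\zeta}(\sigma+iT)\ll\log z\log\log z$ uniformly in $\sigma$; multiplying by $\int_{-\infty}^{c}x^\sigma\,d\sigma\ll x/\log x$ (the part to the left of $\Re s=-1$ being forced negligible by $x^\sigma$) and by $|s|^{-1}\le z^{-1}$, and using that $(\log t\log\log t)/t$ decreases, bounds the horizontal integrals by $O(\log\log x)$. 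The main term is the Perron error: because $x/z<1$ there is no integer strictly between $x\pm x/z$ except possibly $x$ itself, so it reduces to the near-diagonal sums $\tfrac{x}{z}\sum_{k\ge1}\tfrac{\Lambda(x\pm k)}{k}$ (plus the $n=x$ term, which merges with $\psi-\psi_0$). Partial summation against an explicit Brun--Titchmarsh bound $\pi(x+t)-\pi(x)\ll t/\log t$ for $t\ge2$, together with the easy count of higher prime powers, turns these into $O(\log x\log\log x)$, the $\log\log x$ arising from $\int\tfrac{dt}{t\log t}$. (Replacing Brun--Titchmarsh here by the trivial $\psi(x+t)-\psi(x)\ll t\log x$ yields only $O(\log^2 x)$, essentially the strength of Dudek's estimate; this is precisely where the sharpening happens.)

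The real work is converting every ``$O$'' into the constant $5$: one needs an explicit truncated Perron lemma, an explicit Brun--Titchmarsh inequality with explicit handling of prime powers, an explicit partial-fraction bound for $\tfrac{\zeta'}{\zeta}$ (hence explicit bounds on $N(t)$ and on $1-\Re\rho$ from a Stechkin/de la Vall\'ee Poussin-type zero-free region), explicit Stirling estimates for the $s=0$ residue and the trivial-zero sum, and a careful optimization of the free parameters (the abscissa $c$, the window for $T$, the leftmost vertical line, and how the $x/z<1$ saving is apportioned among the various pieces). This bookkeeping is where the bulk of the argument lies. The bound $x>10^{18}$ is then invoked at the end: once $\log x$ and $\log\log x$ are this large, the leftover $O(1)$ and the subordinate $O(\log x)$, $O(\log\log x)$ terms all fit under $5\log x\log\log x$, and every explicit prime- and zero-counting input is comfortably in force.
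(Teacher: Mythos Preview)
Your proposal is correct and follows essentially the same route as the paper: truncated Perron plus a contour shift, with $T$ chosen near $z$ so that $\zeta'/\zeta$ is controlled on the horizontals, the Perron near-diagonal error handled via the explicit Brun--Titchmarsh inequality (you correctly identify this as the point where the sharpening over Dudek occurs), and the hypothesis $z>x$ used throughout to convert $\log z$ losses into $\log x$. Two small remarks: the paper quotes Dudek's explicit lemma giving $|\zeta'/\zeta(\sigma+iT)|<\log^2 z+20\log z$ for some $T\in(z-2,z)$ rather than your averaging bound $\log z\log\log z$ (either suffices, since even $O(\log x)$ for the horizontals is absorbed), and no zero-free region input is needed anywhere---the nontrivial zeros only enter through $|x^\rho/\rho|\le x/|\rho|$, so you can drop the Stechkin/de~la~Vall\'ee~Poussin reference.
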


\section{Preliminary results}

\subsection{An application of Vizing's theorem}

\begin{theorem}[Vizing~\cite{vizing}]\label{thmvizing} A simple graph with maximal degree $\Delta$ admits a proper edge coloring with $\Delta+1$ colors.
\end{theorem}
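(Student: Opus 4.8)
The plan is the classical argument by induction on the number of edges $m=|E(G)|$, combining Vizing fans with Kempe chains. For $m=0$ there is nothing to prove, so assume $m\geq 1$, delete an edge $e=uv$, and use the inductive hypothesis to fix a proper edge coloring $c$ of $G-e$ with the palette $C=\{1,\dots,\Delta+1\}$; the goal is to modify $c$ so that $e$ can also be colored. Every vertex $w$ has degree at most $\Delta$, hence is incident to at most $\Delta$ colored edges, so some color of $C$ is \emph{free} at $w$. If a single color is free at both $u$ and $v$, put it on $e$ and we are done; so assume from now on that the free colors at $u$ and at $v$ are disjoint.

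Next I would build a maximal \emph{Vizing fan} at $u$: a sequence $v=v_0,v_1,\dots,v_s$ of distinct neighbors of $u$ such that $c(uv_i)$ is free at $v_{i-1}$ for every $i\in\{1,\dots,s\}$, where $uv_0=e$ is still uncolored. To \emph{shift} such a fan through a color $\gamma$ that is free at both $u$ and $v_s$ means: recolor $uv_i$ with the former color of $uv_{i+1}$ for $i=0,\dots,s-1$ and set $c(uv_s)=\gamma$; a routine check against the fan property and the freeness of $\gamma$ shows this yields a proper coloring of all of $G$. Fix a color $\alpha$ free at $u$. If $\alpha$ is free at $v_s$, shift the fan through $\alpha$ and stop. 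Otherwise choose a color $\beta$ free at $v_s$ (so $\beta\neq\alpha$); if $\beta$ happens to be free at $u$, shift through $\beta$ and stop. In the remaining case $\beta$ is used at $u$, and by maximality of the fan it must be the color of an edge $uv_j$ with $1\leq j\leq s-1$, whence the fan property gives that $\beta$ is also free at $v_{j-1}$.

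This last case is handled by a Kempe swap in the colors $\alpha$ and $\beta$. The edges colored $\alpha$ or $\beta$ form a disjoint union of paths and even cycles, and each of the three vertices $u$, $v_{j-1}$, $v_s$ — since $u$ misses $\alpha$ while $v_{j-1}$ and $v_s$ miss $\beta$ — is an endpoint (or an isolated vertex) of a path component. If the $\alpha\beta$-path through $v_s$ contains neither $u$ nor $v_{j-1}$, I would interchange $\alpha$ and $\beta$ along it: afterwards $\alpha$ is free at both $u$ and $v_s$, the sequence $v_0,\dots,v_s$ is still a valid fan (its only edge at $u$ of color $\alpha$ or $\beta$ is $uv_j$, which lies in $u$'s untouched component), and shifting the fan through $\alpha$ finishes. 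Otherwise I would interchange $\alpha$ and $\beta$ along the $\alpha\beta$-path through $v_{j-1}$ — a short argument shows this path then cannot contain $u$ — which makes $\alpha$ free at both $u$ and $v_{j-1}$ while leaving the shorter fan $v_0,\dots,v_{j-1}$ intact, and shifting that sub-fan through $\alpha$ finishes. I expect the main obstacle to be precisely the bookkeeping in this last step: one must verify that the chosen swap neither alters the colors on the relevant (full or truncated) fan nor reintroduces $\beta$ at $v_{j-1}$, and it is exactly these requirements that dictate which of the two swaps to perform, according to how $u$, $v_{j-1}$, $v_s$ distribute among the path components — the content of Vizing's adjacency argument. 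Since this is a finite case analysis, each branch ending with an explicit proper $(\Delta+1)$-edge-coloring of $G$, the induction is complete.
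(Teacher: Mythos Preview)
Your argument is the standard Vizing-fan/Kempe-chain proof and is essentially correct; the bookkeeping you flag as ``the main obstacle'' is indeed routine once one notes that the fan edges $uv_1,\dots,uv_{j-1}$ carry neither $\alpha$ nor $\beta$ (since $\alpha$ is free at $u$ and $\beta=c(uv_j)$ appears exactly once at $u$), so the chosen swap cannot disturb the truncated fan, and the endpoint analysis you sketch does force the $v_{j-1}$-path to avoid $u$ in the remaining case.

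However, there is nothing to compare against: the paper does not prove \Cref{thmvizing} at all. It is quoted as a classical result with a citation to Vizing's original 1964 paper and used as a black box in the proof of \Cref{thm2}. So your write-up supplies strictly more than the paper does here; if anything, it would be appropriate to replace the full proof with a one-line reference (e.g.\ to Vizing~\cite{vizing} or any standard graph-theory text) to match the paper's treatment.
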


\begin{lemma}\label{thm2} Let $G$ be a simple graph on $n$ vertices with degrees $d_1,\dotsc,d_n$. Let $\delta\geq 1$ be an integer, and let $d\geq 2$ be an even integer satisfying
\begin{equation}\label{eq:DPG-good}
\delta d\leq\sum_{d_\ell<\delta}d_\ell-\sum_{d_\ell\geq\delta}d_\ell.
\end{equation}
Then $G$ has a matching of size $d/2$.
\end{lemma}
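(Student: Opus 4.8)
The plan is to delete the large-degree vertices of $G$ and apply Vizing's theorem to what survives. Split the index set according to the threshold $\delta$: let $S=\{\ell:d_\ell<\delta\}$ and $L=\{\ell:d_\ell\geq\delta\}$, and let $H=G[S]$ be the subgraph of $G$ induced on the vertices indexed by $S$. Since each vertex indexed by $S$ already has degree at most $\delta-1$ in $G$, the maximum degree $\Delta(H)$ of $H$ satisfies $\Delta(H)\leq\delta-1$.

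First I would bound $|E(H)|$ from below. Every edge of $G$ that is missing from $H$ is incident to at least one vertex indexed by $L$, and the number of such edges is at most $\sum_{\ell\in L}d_\ell$ (this sum double-counts the edges inside $L$ and counts each $L$--$S$ edge once). Hence
\[
|E(H)|\geq|E(G)|-\sum_{\ell\in L}d_\ell=\frac12\Bigl(\sum_{d_\ell<\delta}d_\ell-\sum_{d_\ell\geq\delta}d_\ell\Bigr)\geq\frac{\delta d}{2},
\]
where the middle equality uses $|E(G)|=\tfrac12\sum_\ell d_\ell$ and the last step is precisely the hypothesis \eqref{eq:DPG-good}.

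Now apply \Cref{thmvizing} to $H$: since $\Delta(H)\leq\delta-1$, the edges of $H$ admit a proper coloring with at most $\delta$ colors, so $E(H)$ is partitioned into at most $\delta$ matchings. By averaging, one of these matchings has at least $|E(H)|/\delta\geq d/2$ edges; since $d$ is even, $d/2$ is an integer, and any $d/2$ of its edges form a matching of size $d/2$ in $H$, hence in $G$.

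There is no serious obstacle here; the one non-routine idea is to discard the vertices indexed by $L$ before invoking Vizing, since a single vertex of very large degree would otherwise force many color classes and wreck the averaging bound. The inequality \eqref{eq:DPG-good} is exactly what guarantees that enough edges remain in $H$ after this deletion — and it also rules out the degenerate case $S=\emptyset$, in which the left-hand side of \eqref{eq:DPG-good} would be negative and there would be nothing to prove.
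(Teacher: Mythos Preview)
Your proof is correct and follows exactly the paper's approach: delete the high-degree vertices, bound the surviving edge count via the hypothesis \eqref{eq:DPG-good}, apply \Cref{thmvizing}, and pigeonhole among the $\delta$ color classes. (One cosmetic slip in your closing commentary: when $S=\emptyset$ it is the \emph{right}-hand side of \eqref{eq:DPG-good} that becomes nonpositive, not the left.)
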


\begin{proof} Let us delete all vertices of degree at least $\delta$ (and the incident edges) from $G$. The remaining subgraph $H$ has maximal degree less than $\delta$, and number of edges at least
\begin{equation}\label{numberofedges}
\frac{1}{2}\sum_{\ell=1}^n d_\ell-\sum_{d_\ell\geq\delta}d_\ell =
\frac{1}{2}\left(\sum_{d_\ell<\delta}d_\ell-\sum_{d_\ell\geq\delta}d_\ell\right)
\geq\frac{\delta d}{2}
\end{equation}
by \eqref{eq:DPG-good}. It follows from \Cref{thmvizing} that the edge set of $H$ can be partitioned into $\delta$ matchings, and
then \eqref{numberofedges} shows that the largest matching in this decomposition must be of size at least $d/2$. Since $H$ is a subgraph of $G$, the proof is complete.
\end{proof}

\subsection{Preliminaries about $\Gamma(z)$ and $\zeta(s)$}

\begin{lemma}\label{gammabound}
Assume that $\Re z>0$. Then
\[\Re\frac{\Gamma'(z)}{\Gamma(z)}+\frac{\Gamma'(\Re z)}{\Gamma(\Re z)}<
\log|z|+\log\Re z-\Re\frac{1}{2z}-\frac{1}{2\Re z}.\]
\end{lemma}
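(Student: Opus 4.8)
The plan is to turn the claim, via Binet's integral formula for the digamma function, into a single positivity statement for an integral, and then to prove that by combining the monotonicity of $(e^{2\pi u}-1)^{-1}$ with one elementary estimate.

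Write $z=\sigma+i\tau$ with $\sigma=\Re z>0$, and let $\psi=\Gamma'/\Gamma$. Binet's second formula gives, for every $\Re z>0$,
\[
\psi(z)=\log z-\frac{1}{2z}-2\int_0^\infty\frac{u\,du}{(u^2+z^2)(e^{2\pi u}-1)}\,.
\]
Applying this to $z$ and to the positive real number $\sigma$, adding, and taking real parts (recall $\Re\log z=\log|z|$), the asserted inequality becomes equivalent to
\[
I:=\int_0^\infty\frac{u}{e^{2\pi u}-1}\left(\Re\frac{1}{u^2+z^2}+\frac{1}{u^2+\sigma^2}\right)du>0\,.
\]
So everything reduces to proving $I>0$. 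Setting $P:=u^2+\sigma^2>0$ and $d(P):=(P-\tau^2)^2+4\sigma^2\tau^2=|u^2+z^2|^2>0$, one has $\Re\frac{1}{u^2+z^2}=\frac{P-\tau^2}{d(P)}$, so the bracket equals $\frac1P+\frac{P-\tau^2}{d(P)}=\frac{N(P)}{P\,d(P)}$ with $N(P):=d(P)+P(P-\tau^2)=2P^2-3\tau^2P+\tau^2(\tau^2+4\sigma^2)$. Hence the sign of the integrand is governed by the quadratic $N$, whose discriminant in $P$ is $\tau^2(\tau^2-32\sigma^2)$.

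If $\tau^2\le 32\sigma^2$ this discriminant is $\le 0$, so $N\ge 0$ throughout, the integrand is $\ge 0$ and not identically zero, and $I>0$. The only case with real work is $\tau^2>32\sigma^2$, where $N$ — hence the integrand — changes sign. Here one checks that $N<0$ exactly on an interval $(P_-,P_+)$ with $P_-,P_+>\tfrac12\tau^2$ (indeed $P_-=\tfrac14\bigl(3\tau^2-|\tau|\sqrt{\tau^2-32\sigma^2}\bigr)>\tfrac12\tau^2$ is equivalent to $32\sigma^2>0$); in particular $P_\pm>\tfrac12\tau^2>\sigma^2$, so $u_\pm:=\sqrt{P_\pm-\sigma^2}$ satisfy $0<u_-<u_+$, and the integrand $w(u)F(u)$ — with $w(u):=(e^{2\pi u}-1)^{-1}$ positive and strictly decreasing and $F(u):=u\,N(P)/(P\,d(P))$ — is $\ge 0$ on $(0,u_-)$, $\le 0$ on $(u_-,u_+)$, and $>0$ on $(u_+,\infty)$. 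Using $w(u)\ge w(u_-)$ on $(0,u_-)$, $w(u)\le w(u_-)$ on $(u_-,u_+)$, and $\int_{u_+}^\infty wF>0$, and then substituting $P=u^2+\sigma^2$, one gets
\[
I>w(u_-)\int_0^{u_+}F(u)\,du=\frac{w(u_-)}{2}\int_{\sigma^2}^{P_+}\frac{N(P)}{P\,d(P)}\,dP\,.
\]

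The remaining integral is elementary: since $\frac{N(P)}{P\,d(P)}=\frac1P+\frac{P-\tau^2}{d(P)}$ and $(\sigma^2-\tau^2)^2+4\sigma^2\tau^2=(\sigma^2+\tau^2)^2$,
\[
\int_{\sigma^2}^{P_+}\frac{N(P)}{P\,d(P)}\,dP=\log\frac{P_+\sqrt{(P_+-\tau^2)^2+4\sigma^2\tau^2}}{\sigma^2(\sigma^2+\tau^2)}\,,
\]
and this is positive because $P_+>\tfrac12\tau^2$ and $\tau^2>32\sigma^2$ give
\[
P_+\sqrt{(P_+-\tau^2)^2+4\sigma^2\tau^2}\ \ge\ 2\sigma|\tau|\,P_+\ >\ \sigma|\tau|\,\tau^2\ >\ 2\sigma^2\tau^2\ \ge\ \sigma^2(\sigma^2+\tau^2)\,.
\]
Thus $I>0$ in all cases, which is the lemma. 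The one delicate point is the sign change of the integrand when $\tau^2>32\sigma^2$: one cannot dispose of the negative part by the exponential decay of $w$, since the bad interval $(u_-,u_+)$ collapses toward the origin as $\sigma/\tau\to 0$; monotonicity of $w$ is exactly what converts it into the nonnegative, closed-form unweighted integral over $[0,u_+]$. The rest — Binet's formula and the rational-function bookkeeping — is routine.
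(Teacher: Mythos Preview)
Your argument is correct, but the route you chose is considerably harder than the one in the paper. The paper uses a different integral representation of the digamma function, namely
\[
\psi(z)=\log z-\frac{1}{2z}-\int_0^\infty\Bigl(\tfrac12-\tfrac1t+\tfrac1{e^t-1}\Bigr)e^{-tz}\,dt,\qquad \Re z>0,
\]
and the lemma then reduces to
\[
\int_0^\infty\Bigl(\tfrac12-\tfrac1t+\tfrac1{e^t-1}\Bigr)\bigl(\Re e^{-tz}+e^{-t\Re z}\bigr)\,dt>0.
\]
Here both factors are nonnegative: the first is a standard fact, and the second equals $e^{-t\sigma}\bigl(1+\cos(t\tau)\bigr)\ge 0$, vanishing only on a discrete set. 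So the proof is one line.

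By contrast, Binet's \emph{second} formula, which you used, produces a kernel $\Re\frac{1}{u^2+z^2}+\frac{1}{u^2+\sigma^2}$ that genuinely changes sign once $\tau^2>32\sigma^2$, forcing the monotonicity-of-$w$ trick and the explicit evaluation of the unweighted integral. That extra work is handled correctly --- the quadratic analysis of $N(P)$, the bound $P_->\tfrac12\tau^2$, the closed-form antiderivative, and the final chain of inequalities all check out --- but it buys nothing over the paper's approach. The moral is that for inequalities of this type the choice of integral representation matters: the representation with kernel $e^{-tz}$ factors cleanly under $z\mapsto\Re z$, whereas the one with kernel $(u^2+z^2)^{-1}$ does not.
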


\begin{proof} With the help of the well-known integral representation (cf.\ \cite[\S 12.31]{WW})
\begin{equation}\label{gammaintegral}
\frac{\Gamma'(z)}{\Gamma(z)}=\log z-\frac{1}{2z}
-\int_0^\infty\left(\frac{1}{2}-\frac{1}{t}+\frac{1}{e^t-1}\right)e^{-tz}\,dt,\qquad\Re z >0,
\end{equation}
the statement becomes
\[\int_0^\infty\left(\frac{1}{2}-\frac{1}{t}+\frac{1}{e^t-1}\right)\left(\Re e^{-tz}+e^{-t\Re z}\right)\,dt>0.\]
However, this one is clear, because the integrand is non-negative with a discrete set of zeros (there are no zeros when $z$ is real).
\end{proof}

\begin{lemma}[Delange~\cite{De}]\label{zetalemma1} For any $\sigma>1$ and $t\in\RR$ we have
\[\left|\frac{\zeta'(\sigma+it)}{\zeta(\sigma+it)}\right|<\frac{1}{\sigma-1}-\frac{1}{2\sigma^2}.\]
\end{lemma}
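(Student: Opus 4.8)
The plan is to reduce the bound to the case $t=0$ and then prove the resulting real-variable inequality by estimating $\zeta(\sigma)$ from below and $-\zeta'(\sigma)$ from above.

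For $\Re s=\sigma>1$ the Dirichlet series $-\zeta'(s)/\zeta(s)=\sum_{n\geq 2}\Lambda(n)n^{-s}$ converges absolutely, so by the triangle inequality
\[
\left|\frac{\zeta'(\sigma+it)}{\zeta(\sigma+it)}\right|\leq\sum_{n\geq 2}\frac{\Lambda(n)}{n^{\sigma}}=-\frac{\zeta'(\sigma)}{\zeta(\sigma)},\qquad t\in\RR,
\]
and since $\zeta(\sigma)>0>\zeta'(\sigma)$ on $(1,\infty)$, it suffices to prove the sharper real statement $-\zeta'(\sigma)/\zeta(\sigma)<\frac{1}{\sigma-1}-\frac{1}{2\sigma^{2}}$ for $\sigma>1$ (the strict inequality for $t\neq 0$ then comes for free).

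Next I would record two elementary comparisons of series with integrals. Writing $\zeta(\sigma)-\frac{1}{\sigma-1}=\sum_{n\geq 1}\bigl(n^{-\sigma}-\int_{n}^{n+1}x^{-\sigma}\,dx\bigr)$ and using convexity of $x\mapsto x^{-\sigma}$ to bound each integral by the trapezoidal value $\tfrac12\bigl(n^{-\sigma}+(n+1)^{-\sigma}\bigr)$, the $n$-th summand is at least $\tfrac12\bigl(n^{-\sigma}-(n+1)^{-\sigma}\bigr)$, and telescoping gives
\[
\zeta(\sigma)\geq\frac{1}{\sigma-1}+\frac12=\frac{\sigma+1}{2(\sigma-1)},\qquad\sigma>1.
\]
For the other bound I would show
\[
-\zeta'(\sigma)=\sum_{n\geq 2}\frac{\log n}{n^{\sigma}}\leq\int_{1}^{\infty}\frac{\log x}{x^{\sigma}}\,dx=\frac{1}{(\sigma-1)^{2}},\qquad\sigma>1.
\]
Here $g(x)=\log x/x^{\sigma}$ vanishes at $x=1$, has a single maximum at $x=e^{1/\sigma}<3$, and is convex for $x>e^{(2\sigma+1)/(\sigma(\sigma+1))}$, an exponent that is $\leq\tfrac32$ for $\sigma\geq 1$; hence for $n\geq 5$ the interval $[n-\tfrac12,n+\tfrac12]$ lies in the convex range, so $g(n)\leq\int_{n-1/2}^{n+1/2}g$, whence $\sum_{n\geq 5}g(n)\leq\int_{9/2}^{\infty}g$, and the bound reduces to the single inequality $g(2)+g(3)+g(4)\leq\int_{1}^{9/2}g(x)\,dx$, a routine one-variable estimate.

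Combining the two bounds gives $-\zeta'(\sigma)/\zeta(\sigma)\leq\bigl(1/(\sigma-1)^{2}\bigr)\big/\bigl((\sigma+1)/(2(\sigma-1))\bigr)=\frac{2}{\sigma^{2}-1}$ for $\sigma>1$, so the lemma follows from the elementary identity
\[
\left(\frac{1}{\sigma-1}-\frac{1}{2\sigma^{2}}\right)-\frac{2}{\sigma^{2}-1}=\frac{1}{\sigma+1}-\frac{1}{2\sigma^{2}}=\frac{(2\sigma+1)(\sigma-1)}{2\sigma^{2}(\sigma+1)}>0.
\]
The hard part is the upper bound $-\zeta'(\sigma)\leq(\sigma-1)^{-2}$: the target inequality degenerates to an equality as $\sigma\to 1^{+}$, so there is no slack near $\sigma=1$ and the series–integral comparison for $g$ must be carried out carefully — one cannot, for instance, simply bound the tail by $\int_{2}^{\infty}g$ and hope that $g(2)\leq\int_{1}^{2}g$, since the latter already fails for $\sigma$ around $1.5$. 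Everything else is routine calculus and arithmetic.
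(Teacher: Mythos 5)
Your proposal is correct, and it is necessarily a different route from the paper's, because the paper gives no proof of this lemma at all: it is quoted verbatim from Delange's note \cite{De}, which is where the inequality $\bigl|\zeta'/\zeta(\sigma+it)\bigr|<\frac{1}{\sigma-1}-\frac{1}{2\sigma^2}$ is established. What you supply is a self-contained elementary derivation: the reduction $\bigl|\zeta'/\zeta(\sigma+it)\bigr|\le-\zeta'(\sigma)/\zeta(\sigma)$ is the standard first step (also Delange's), and your two series--integral comparisons, $\zeta(\sigma)\ge\frac{1}{\sigma-1}+\frac12$ (trapezoid bound, telescoping) and $-\zeta'(\sigma)\le\frac{1}{(\sigma-1)^2}$ (midpoint bound on the convex range of $g(x)=x^{-\sigma}\log x$), combine to $-\zeta'(\sigma)/\zeta(\sigma)\le\frac{2}{\sigma^2-1}$, which is indeed strictly below the stated bound since $\frac{1}{\sigma-1}-\frac{1}{2\sigma^2}-\frac{2}{\sigma^2-1}=\frac{(2\sigma+1)(\sigma-1)}{2\sigma^2(\sigma+1)}>0$; note that near $\sigma=1$ this chain is tight (it ultimately rests on $\gamma>\tfrac12$), so the constant $\tfrac12$ in your lower bound for $\zeta(\sigma)$ is exactly what is needed, and your convexity threshold computation $g''(x)\ge 0$ for $\log x\ge\frac{2\sigma+1}{\sigma(\sigma+1)}\le\frac32$ is right, so the midpoint step for $n\ge 5$ is sound. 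The one step you leave informal is the residual inequality $g(2)+g(3)+g(4)\le\int_1^{9/2}g(x)\,dx$ for all $\sigma>1$; it is true (the margin is about $0.07$ at $\sigma=1$ and the integral dominates the exponentially decaying sum for large $\sigma$), but "routine" should still be backed by a uniform-in-$\sigma$ argument (e.g.\ monotonicity of the difference in $\sigma$, or a split into a compact range handled by derivative bounds and a tail range where $\int_1^{9/2}g\ge\int_1^2 x^{-\sigma}(x-1)\,dx$ easily beats $3\cdot 2^{-\sigma}\log 4$), not just spot checks --- that is the only gap to fill, and it is a genuinely minor one. What your approach buys is independence from the literature reference; what the paper's choice buys is brevity, since Delange's note already contains a short proof of exactly this statement.
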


\begin{lemma}[Dudek~\cite{Du}]\label{zetalemma2}
Let $\sigma\leq -1$ and $t\in\RR$. Assume that either $\sigma\in 1+2\ZZ$ or $|t|\geq 1$. Then
\[\left|\frac{\zeta'(\sigma+it)}{\zeta(\sigma+it)}\right|<9+\log|\sigma+it|.\]
\end{lemma}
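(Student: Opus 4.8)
My plan is to reduce the estimate to the half-plane $\Re s>1$ by means of the asymmetric functional equation $\zeta(s)=2^{s}\pi^{s-1}\sin(\tfrac{\pi s}{2})\,\Gamma(1-s)\,\zeta(1-s)$. Taking logarithmic derivatives and abbreviating $w:=1-s$, one gets
\[\frac{\zeta'(s)}{\zeta(s)}=\log(2\pi)+\frac{\pi}{2}\cot\frac{\pi s}{2}-\frac{\Gamma'(w)}{\Gamma(w)}-\frac{\zeta'(w)}{\zeta(w)},\]
so it is enough to bound the four terms on the right when $\sigma\le -1$; note that then $\Re w=1-\sigma\ge 2$, which puts $w$ both in the right half-plane and in the region of absolute convergence of $\zeta$. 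The term $\log(2\pi)$ is a harmless constant, and \Cref{zetalemma1} applied with argument $w$ (legitimate since $\Re w\ge 2>1$) gives $|\zeta'(w)/\zeta(w)|<\tfrac{1}{-\sigma}-\tfrac{1}{2(1-\sigma)^{2}}<1$.

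For the gamma term I would substitute $w$ into the integral representation \eqref{gammaintegral}. Because $|e^{-tw}|=e^{-t(1-\sigma)}\le e^{-2t}$ and the integrand $\tfrac12-\tfrac1t+\tfrac1{e^{t}-1}$ is non-negative (as observed in the proof of \Cref{gammabound}), the integral is at most $\int_{0}^{\infty}\bigl(\tfrac12-\tfrac1t+\tfrac1{e^{t}-1}\bigr)e^{-2t}\,dt=\log 2-\tfrac14-\tfrac{\Gamma'(2)}{\Gamma(2)}<\tfrac1{12}$. Using in addition $|\tfrac{1}{2w}|\le\tfrac14$, the bound $|\log w|\le\log|w|+|\arg w|<\log|w|+\tfrac\pi2$ valid since $\Re w>0$, and $|w|^{2}=(1+|\sigma|)^{2}+t^{2}\le(1+|s|)^{2}$ so that $\log|w|\le\log|s|+1$, I obtain $|\Gamma'(w)/\Gamma(w)|<\log|s|+1+\tfrac\pi2+\tfrac14+\tfrac1{12}<\log|s|+2.91$.

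The only genuinely delicate contribution is the cotangent, and this is exactly where the hypothesis is used. From $|\cot(x+iy)|^{2}=(\sin^{2}2x+\sinh^{2}2y)/(\cosh 2y-\cos 2x)^{2}$ with $x=\tfrac{\pi\sigma}{2}$ and $y=\tfrac{\pi t}{2}$, two cases arise. If $\sigma$ is an odd integer, then $2x\in\pi+2\pi\ZZ$, hence $\sin 2x=0$, $\cos 2x=-1$, and the identity collapses to $|\cot(\tfrac{\pi s}{2})|=|\tanh(\tfrac{\pi t}{2})|<1$. If instead $|t|\ge 1$, then $|2y|=\pi|t|\ge\pi$, the numerator is $\le 1+\sinh^{2}2y=\cosh^{2}2y$, the denominator is $\ge(\cosh 2y-1)^{2}$, and since $u\mapsto u/(u-1)$ decreases we get $|\cot(\tfrac{\pi s}{2})|\le\cosh(2y)/(\cosh(2y)-1)\le\cosh\pi/(\cosh\pi-1)<1.1$. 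Thus $\tfrac\pi2|\cot(\tfrac{\pi s}{2})|<1.73$ in either case, and summing the four bounds gives $|\zeta'(s)/\zeta(s)|<\log(2\pi)+1.73+1+(\log|s|+2.91)<9+\log|s|$, as claimed. The main obstacle is precisely this cotangent estimate near the poles of $\cot(\tfrac{\pi s}{2})$ at the negative even integers (the trivial zeros of $\zeta$), which the two-part hypothesis is tailored to steer clear of; everything else is routine, if slightly fussy, tracking of explicit constants, and the final margin below $9$ is comfortable.
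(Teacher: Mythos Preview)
Your proof is correct and follows the standard approach—the asymmetric functional equation, together with bounds on $\Gamma'/\Gamma$ and $\zeta'/\zeta$ in the right half-plane and a cotangent estimate tailored to the two-part hypothesis—that underlies Dudek's Lemma~2.3, to which the paper simply refers. Your write-up is in fact more self-contained than the paper's, which only cites \cite{Du} (noting a minor oversight there about the parity of $U$) without reproducing the argument; the numerical margin you obtain ($7.48<9$) is comfortable and all intermediate bounds check out.
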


\begin{proof} This is a variant of \cite[Lem.~2.3]{Du}, and can be proved in the same way. We note a small oversight in \cite[p.~183]{Du}: instead of assuming that $U\geq 2$ is an even integer, one should assume that $U\geq 1$ is an odd integer, just as in \cite[\S 17]{Da}.
\end{proof}

\begin{lemma}[Dudek~\cite{Du}]\label{zetalemma3}
Assume that $z>100$. Then there exists $T\in(z-2,z)$ such that
\begin{equation}\label{eq:log-der-hline-high}
\left|\frac{\zeta'(\sigma+iT)}{\zeta(\sigma+iT)}\right|<\log^2 z + 20\log z,\qquad \sigma>-1.
\end{equation}
\end{lemma}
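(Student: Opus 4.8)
The plan is to treat $\sigma>2$ and $-1\le\sigma\le 2$ separately. For $\sigma>2$, \Cref{zetalemma1} gives, for any $T$, that $\left|\zeta'(\sigma+iT)/\zeta(\sigma+iT)\right|<\tfrac{1}{\sigma-1}-\tfrac{1}{2\sigma^2}<1<\log^2 z+20\log z$ because $z>100$, so the entire difficulty lies in the closed strip $-1\le\sigma\le 2$. There I would use the classical partial fraction expansion of $\zeta'/\zeta$ arising from the Hadamard product for $\xi$ (see \cite{Da}); with a suitable constant $B$ it reads
\[\frac{\zeta'}{\zeta}(s)=B-\frac{1}{s-1}+\tfrac12\log\pi-\tfrac12\frac{\Gamma'}{\Gamma}\Bigl(\tfrac{s}{2}+1\Bigr)+\sum_\rho\left(\frac{1}{s-\rho}+\frac{1}{\rho}\right),\]
the sum being over the nontrivial zeros $\rho=\beta+i\gamma$.

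First I would subtract from this the same identity at $2+iT$, which kills $B$ and the terms $1/\rho$, and evaluate the remainder at $s=\sigma+iT$ with $-1\le\sigma\le2$. The value $\zeta'(2+iT)/\zeta(2+iT)$ is $O^*(7/8)$ by \Cref{zetalemma1}; the two $1/(s-1)$-type terms are $O^*\!\bigl(2/(z-2)\bigr)$ since $|T|>z-2$; and the difference of the two $\Gamma'/\Gamma$-values is $O^*$ of a small explicit constant, obtained from \eqref{gammaintegral} by writing each as a logarithm plus a term that is $O^*\!\bigl(1/(z-2)\bigr)$ and using that the two arguments share their imaginary part and have bounded real parts. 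For the zero sum, a zero with $|\gamma-T|>1$ contributes $O^*\!\bigl(3/(\gamma-T)^2\bigr)$, because $|\sigma+iT-\rho|,\,|2+iT-\rho|\ge|\gamma-T|$ and $|2-\sigma|\le3$; summing these after grouping the zeros into unit windows around $T$ and inserting an explicit form of the Riemann--von Mangoldt estimate for $N(t+1)-N(t)$ yields $O^*(c_1\log z)$. A zero with $|\gamma-T|\le1$ has $|2+iT-\rho|\ge 2-\beta\ge1$, so those terms contribute at most $N(T+1)-N(T-1)\le c_2\log z$ on top of the ``bad'' part $\sum_{|\gamma-T|\le1}|\sigma+iT-\rho|^{-1}\le\sum_{|\gamma-T|\le1}|T-\gamma|^{-1}$. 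Collecting everything, for every $T>z-2$ that avoids the ordinates of zeros and every $-1\le\sigma\le2$ one gets
\[\left|\frac{\zeta'}{\zeta}(\sigma+iT)\right|\le\sum_{|\gamma-T|\le1}\frac{1}{|T-\gamma|}+c_3\log z\]
with an explicit $c_3$ assembled from the constants above.

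Next comes the pigeonhole choice of $T\in(z-2,z)$. Every zero capable of appearing in the last sum for some such $T$ has $\gamma\in(z-3,z+1)$, and there are at most $N(z+1)-N(z-3)\le c_4\log z$ of those by the explicit Riemann--von Mangoldt formula. Deleting from $(z-2,z)$ an interval of length $2c_5/\log z$ about each such $\gamma$ removes total length at most $2c_4c_5$, which is strictly less than the length $2$ of the interval once $c_5<1/c_4$; any surviving $T$ then satisfies $|T-\gamma|\ge c_5/\log z$, hence also $|\sigma+iT-\rho|\ge c_5/\log z$, for all zeros $\rho$. For this $T$ the bad sum is at most $\bigl(N(T+1)-N(T-1)\bigr)(\log z)/c_5\le(c_2/c_5)\log^2 z$, so whenever the explicit constants permit the choice $c_5\ge c_2$ --- equivalently $c_2c_4<1$, which holds because the true density of zeros is $\tfrac{1}{2\pi}\log$ --- the bad sum is at most $\log^2 z$, and adding $c_3\log z\le 20\log z$ finishes the strip $-1\le\sigma\le2$ for this $T$; combined with the trivial bound for $\sigma>2$ (valid for the same $T$), this proves the lemma.

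The main obstacle is purely quantitative. Forcing the leading coefficient to equal $1$ (not $1+\eps$) imposes the constraint $c_2c_4<1$ relating the two zero-counting constants, so one must supply sufficiently sharp explicit bounds for $N(T)$ near height $z$ (of Backlund type, not crude ones), while simultaneously keeping the $\Gamma'/\Gamma$ contribution, the $\zeta'/\zeta(2+iT)$ contribution, the far-zero tail, and the slack from the pigeonhole step jointly below $20\log z$ for every $z>100$. Each ingredient is routine by itself; it is this bookkeeping that makes the result worth quoting from \cite{Du} rather than re-deriving here in detail.
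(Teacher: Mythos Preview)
The paper's own proof is a single sentence: it invokes \cite[Lem.~2.8]{Du} and stops. Your proposal is therefore not competing against an argument in the paper but reconstructing the argument behind that citation, and structurally it is the right one: split at $\sigma=2$, use the Hadamard partial-fraction expansion in the strip, subtract the value at $2+iT$, separate near and far zeros, and pigeonhole the choice of $T$ to control the near-zero sum. This is precisely the template of \cite[\S 15--16]{Da} that Dudek makes explicit.

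One quantitative point is worth flagging. You reduce the requirement that the leading coefficient of $\log^2 z$ equal $1$ to the inequality $c_2c_4<1$, and justify it ``because the true density of zeros is $\tfrac{1}{2\pi}\log$''. Asymptotically that is fine, but with the explicit error constant $0.56$ in \Cref{deltazerocount} one finds, near $z=100$, that $c_2\approx 0.88$ (window $\Delta=2$) and $c_4\approx 1.20$ (window $\Delta=4$), so $c_2c_4\approx 1.06>1$. To land on the stated constants your scheme therefore needs a small tweak: either shrink the ``near'' window (e.g.\ to $|\gamma-T|\le\tfrac12$, which drops $c_2c_4$ comfortably below $1$ at the cost of enlarging the far-zero contribution inside $c_3$), or feed in a sharper explicit Riemann--von Mangoldt bound than \Cref{deltazerocount}. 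This is exactly the ``bookkeeping'' you already acknowledge in your final paragraph, and it is the reason the paper simply cites \cite{Du} rather than redoing the numerics.
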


\begin{proof} The statement follows from \cite[Lem.~2.8]{Du}.
\end{proof}

\begin{definition}
For $T>0$, we denote by $N(T)$ the number of zeros of $\zeta(s)$ with imaginary part in $(0,T)$, counted with multiplicity.
\end{definition}

\begin{lemma}\label{deltazerocount} For any $T\geq\Delta+2\pi>\Delta>0$ we have
\[N(T)-N(T-\Delta)<\left(\frac{\Delta}{2\pi}+0.56\right)\log T.\]
\end{lemma}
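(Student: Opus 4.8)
The plan is to realise $N(T)-N(T-\Delta)$ as a winding number of $\zeta$ around a rectangle, estimate its four edges, and exploit that the main term $\frac{\Delta}{2\pi}\log T$ comes from the $\Gamma$-factor of the functional equation (via \Cref{gammabound}), the genuine difficulty being the two horizontal edges — equivalently, the oscillation of $\arg\zeta$ along horizontal segments. We may assume that neither $T$ nor $T-\Delta$ is the ordinate of a zero, the general case following by a limiting argument. Since $T-\Delta\ge 2\pi>1$, the positively oriented rectangle $\mathcal R$ with corners $-1+i(T-\Delta)$, $2+i(T-\Delta)$, $2+iT$, $-1+iT$ contains neither the pole nor a trivial zero of $\zeta$, and its nontrivial zeros are precisely those with ordinate in $(T-\Delta,T)$; hence $2\pi\bigl(N(T)-N(T-\Delta)\bigr)$ equals the total change of $\arg\zeta(s)$ as $s$ traverses $\mathcal R$ once. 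On the right edge $\sigma=2$ the values $\zeta(2+it)$ lie in the disc $\{\,|w-1|<\zeta(2)-1<1\,\}$, so that edge contributes $\int_{T-\Delta}^{T}\Re\frac{\zeta'}{\zeta}(2+it)\,dt$; expanding $\frac{\zeta'}{\zeta}(2+it)=-\sum_{n\ge 2}\Lambda(n)n^{-2-it}$ and integrating termwise makes this telescope to a quantity of absolute value at most $2\log\zeta(2)<1$, in particular $O(1)$ uniformly in $\Delta$ (one could instead invoke \Cref{zetalemma1}).

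On the left edge $\sigma=-1$ we apply the functional equation $\zeta(s)=\chi(s)\zeta(1-s)$ with $\chi(s)=\pi^{s-\frac12}\Gamma\!\bigl(\tfrac{1-s}{2}\bigr)\big/\Gamma\!\bigl(\tfrac s2\bigr)$. The factor $\zeta(1-s)$ contributes $\arg\zeta$ along $\sigma=2$ once more, again $O(1)$; for the $\chi$-factor one computes
\[\frac{d}{dt}\arg\chi(-1+it)=\log\pi-\tfrac12\Re\frac{\Gamma'}{\Gamma}\!\Bigl(1+\tfrac{it}{2}\Bigr)-\tfrac12\Re\frac{\Gamma'}{\Gamma}\!\Bigl(-\tfrac12+\tfrac{it}{2}\Bigr),\]
and after moving the second digamma value to the half-plane $\Re z>0$ by $\frac{\Gamma'}{\Gamma}(z)=\frac{\Gamma'}{\Gamma}(1-z)-\pi\cot\pi z$ (the cotangent correction being purely imaginary for such $z$), \Cref{gammabound} bounds both digamma values from above and yields $\frac{d}{dt}\arg\chi(-1+it)>1.70-\log t$ for $t\ge 2\pi$. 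Traversing the left edge downward, its $\chi$-contribution is thus $<\int_{T-\Delta}^{T}\bigl(\log t-1.70\bigr)\,dt+O(1)<\Delta\log T-1.70\,\Delta+O(1)$. Dividing by $2\pi$, this produces the main term $\frac{\Delta}{2\pi}\log T$ together with a genuine saving of size $\approx 0.27\,\Delta$.

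The two horizontal edges are the crux. The change of $\arg\zeta(\sigma+it)$ as $\sigma$ runs over $[-1,2]$, at the heights $t=T$ and $t=T-\Delta$, is governed by the behaviour of $S(t)=\tfrac1\pi\arg\zeta(\tfrac12+it)$ near those heights; crudely, it is at most $\pi$ plus $\pi$ times the number $\nu_t$ of zeros in $(-1,2)$ of the entire function $w\mapsto\tfrac12\bigl(\zeta(w+it)+\zeta(w-it)\bigr)$, which for real $w$ equals $\Re\zeta(w+it)$ and which at $w=2$ equals $\Re\zeta(2+it)\ge 2-\zeta(2)>0$. One bounds such quantities by Jensen's formula (or Littlewood's lemma), for which one needs control of $|\zeta|$, respectively $\int\log|\zeta|$, in a bounded box around height $t$; this is supplied by the bounds on $|\zeta'/\zeta|$ of \Cref{zetalemma1,zetalemma2} (integrated, together with the functional equation) and a Phragmén--Lindelöf step across $-1\le\sigma\le2$. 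A crude application gives only a bound whose coefficient of $\log t$ is several units, which is far too large; the hard part is to refine the argument — in the manner of Backlund, with an optimised family of discs, or equivalently to establish an explicit estimate $|S(t)|\le 0.28\log t+\cdots$ — so that the two horizontal edges together contribute less than $2\pi\cdot 0.56\log T$ once the $\approx 0.27\,\Delta$ saving is credited against them. This explicit-constant optimisation is the main obstacle.

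Finally one disposes of small heights directly: for $2\pi<T<14.13$ one has $N(T)=0$ and the claim is vacuous, and up to an explicit bound $T_0$ beyond which the analytic estimate takes over, $N(T)$ is small and the low zeros are tabulated, so the inequality is verified by hand. For $T\ge T_0$, summing the four edge estimates gives $2\pi\bigl(N(T)-N(T-\Delta)\bigr)<\Delta\log T+2\pi\cdot 0.56\log T+O(1)-1.70\,\Delta$, and for $T$ this large the $O(1)$ and all remaining lower-order terms are absorbed (by the $\Delta$-saving, or by the slack in $0.56\log T$); dividing by $2\pi$ gives $N(T)-N(T-\Delta)<\bigl(\tfrac{\Delta}{2\pi}+0.56\bigr)\log T$, with strict inequality throughout.
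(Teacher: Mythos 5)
There is a genuine gap, and you name it yourself: the entire quantitative content of \Cref{deltazerocount} is the constant $0.56$, and your treatment of the two horizontal edges — the only place where that constant can come from — is left as ``the main obstacle''. The rectangle/argument-principle framework, the $O(1)$ estimate on $\sigma=2$, and the $\chi$-factor computation on $\sigma=-1$ producing the main term $\frac{\Delta}{2\pi}\log T$ are all standard and plausible, but as you concede, a crude Jensen/Backlund step bounds the horizontal contributions only with a coefficient of several units in front of $\log T$, which does not prove the stated inequality. Carrying out the optimised Backlund-type argument (or, equivalently, proving an explicit bound of the shape $|S(t)|\leq c\log t+\cdots$ with $c$ small enough, including the Phragm\'en--Lindel\"of and disc-radius bookkeeping, plus the verification range for small $T$) is a substantial piece of explicit analytic number theory in its own right; sketching its existence does not discharge it. Note also that your argument estimates the argument variation separately at heights $T$ and $T-\Delta$, so both error terms must together stay below $2\pi\cdot 0.56\log T$, which makes the required per-height constant even tighter; the advertised $\approx 0.27\,\Delta$ saving from the left edge cannot absorb this, since for small $\Delta$ it is negligible while the horizontal-edge errors are not.

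The paper sidesteps all of this by quoting an existing explicit zero-counting result: by \cite[Cor.~1]{BPT} one has
\[
N(T)-N(T-\Delta)=\frac{1}{2\pi}\int_{T-\Delta}^{T}\log\frac{t}{2\pi}\,dt+O^*(0.56\log T),
\]
and the lemma follows immediately by bounding the integrand by $\log T$. So the repair for your write-up is either to invoke such an explicit Riemann--von Mangoldt-type estimate (as the paper does), or to actually execute the explicit-constant optimisation you defer — in which case you would essentially be reproving the cited result rather than proving the lemma from it.
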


\begin{proof} By \cite[Cor.~1]{BPT}, we have
\[N(T)-N(T-\Delta)=\frac{1}{2\pi}\int_{T-\Delta}^T \log\frac{t}{2\pi}\, dt +O^*(0.56\log T).\]
The integrand is less than $\log T$, and the result follows.
\end{proof}

\subsection{Preliminaries about prime gaps}

\begin{theorem}[Ingham {\cite[Th.~4]{I1}}]\label{inghamtheorem}
Let $\eps>0$. For any $x\geq x_0(\eps)$, there is a prime number in $[x,x+x^{5/8+\eps}]$.
\end{theorem}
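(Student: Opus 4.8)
The plan is to derive this from the distribution of the Chebyshev function $\psi(x)=\sum_{p^k\le x}\log p$ in short intervals. Since the contribution of the proper prime powers to $\psi(x+h)-\psi(x)$ is only $O(\sqrt{x}\,\log x)$, it suffices to prove that for every $\eps>0$ there is $x_0(\eps)$ so that, with $h:=x^{5/8+\eps}$,
\[\psi(x+h)-\psi(x)=h+o(h),\qquad x\ge x_0(\eps);\]
indeed this makes $\psi(x+h)-\psi(x)>0$, which forces a genuine prime in $(x,x+h]$ once $x$ is large. To control the left-hand side I would invoke the truncated Riemann--von Mangoldt explicit formula: for $2\le T\le x$,
\[\psi(x+h)-\psi(x)=h-\sum_{|\Im\rho|\le T}\frac{(x+h)^\rho-x^\rho}{\rho}+O\!\left(\frac{x\log^2 x}{T}\right),\]
the sum running over the nontrivial zeros $\rho=\beta+i\gamma$ of $\zeta$. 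Taking $T$ to be an appropriate power of $x$ (around $x^{3/8}$) makes the error term $o(h)$, and, using $(x+h)^\rho-x^\rho=\rho\int_x^{x+h}u^{\rho-1}\,du$, everything reduces to the bound
\[\Sigma:=\sum_{|\gamma|\le T}\left|\frac{(x+h)^\rho-x^\rho}{\rho}\right|\ \ll\ \sum_{|\gamma|\le T} x^{\beta-1}\min\!\left(h,\frac{x}{|\gamma|}\right)\ =\ o(h).\]

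The estimation of $\Sigma$ rests on two classical inputs. The first is the de la Vall\'ee Poussin zero-free region $\beta<1-c/\log(|\gamma|+2)$ together with the local zero count $N(T+1)-N(T)\ll\log T$ (a consequence of \Cref{deltazerocount}); these show that the zeros with $\beta$ close to $1$ contribute at most $\ll x\exp(-c'\sqrt{\log x})=o(h)$. The second, and decisive, input is Ingham's classical zero-density estimate
\[N(\sigma,T):=\#\{\rho=\beta+i\gamma\ :\ \beta\ge\sigma,\ |\gamma|\le T\}\ \ll\ T^{\frac{3(1-\sigma)}{2-\sigma}}\log^5 T,\qquad \tfrac12\le\sigma\le1,\]
which one feeds into $\Sigma$ after a dyadic decomposition of the range of $|\gamma|$ and partial summation in $\sigma$, separating the zeros with $|\gamma|\le x/h$ (where the weight is $h$) from those with $|\gamma|>x/h$ (where it is $x/|\gamma|$). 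The competing exponents balance exactly at $h=x^{5/8+\eps}$, giving $\Sigma\ll_\eps x^{5/8+\eps/2}=o(h)$. The value $5/8$ is precisely what the density exponent $\tfrac{3(1-\sigma)}{2-\sigma}$ yields, and that exponent in turn encodes the fourth moment $\int_0^T|\zeta(\tfrac12+it)|^4\,dt\ll T\log^4 T$ together with the Weyl--van der Corput subconvexity bound $\zeta(\tfrac12+it)\ll_\eps(|t|+2)^{1/6+\eps}$.

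The main obstacle is precisely Ingham's density estimate: its proof requires detecting zeros via a mollified Dirichlet polynomial in the style of Montgomery, a mean value theorem for Dirichlet polynomials, and the moment and subconvexity bounds for $\zeta$ near the critical line mentioned above --- substantial, but entirely standard material. All the remaining pieces (the truncated explicit formula, the zero-free-region contribution, the dyadic bookkeeping for $\Sigma$, and the passage from $\psi$ back to the count of primes) are routine. Of course, as the statement is literally \cite[Th.~4]{I1}, in the present paper one simply cites it; the sketch above is the classical route to it.
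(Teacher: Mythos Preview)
The paper does not prove this theorem at all: it is simply quoted from Ingham~\cite[Th.~4]{I1} as a classical input, with no argument given (the following remark only lists later improvements of the exponent). You recognize this yourself in your last sentence, and your sketch is indeed a faithful outline of the standard explicit-formula-plus-density-estimate route to Ingham's result, so there is nothing further to compare.
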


\begin{remark} The exponent $5/8+\eps$ was improved multiple times, the current record $21/40$ being due to Baker--Harman--Pintz~\cite{BHP}. We have emphasized the classical result of Ingham~\cite{I1} as it suffices for our purposes.
\end{remark}

\begin{theorem}[Carneiro--Milinovich--Soundararajan {\cite[Th.~1.5]{CMS}}]\label{cmstheorem}
Assume the Riemann hypothesis. Then, for any $x\geq 4$, there is a prime number in $[x,x+\frac{22}{25}\sqrt{x}\log x]$.
\end{theorem}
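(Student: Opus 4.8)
The plan is to reproduce the argument of Carneiro, Milinovich and Soundararajan~\cite{CMS}, which couples the Guinand--Weil explicit formula with a Fourier optimization of Beurling--Selberg type; I indicate only the architecture, as the numerics are delicate. First I would pass from primes to the Chebyshev function. Put $L=\tfrac{22}{25}\sqrt x\log x$. Since the prime powers $p^k$ with $k\geq 2$ lying in $[x,x+L]$ number $O(\log x)$ (essentially the prime squares) and together contribute $O(\log^2 x)=o(L)$, it suffices to prove $\psi(x+L)-\psi(x)>0$ for all large $x$, say $x\geq x_0$; the residual range $4\leq x\leq x_0$ is then handled by a finite computation, combining tables of prime gap records with the truncated von Mangoldt formula of \Cref{dudeksharpened}, to certify that no window $[y,y+\tfrac{22}{25}\sqrt y\log y]$ with $y\leq x_0$ is prime-free.

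Next I would invoke the explicit formula
\begin{equation*}
\psi(x+L)-\psi(x)=L-\sum_{\rho}\frac{(x+L)^{\rho}-x^{\rho}}{\rho}+O(x^{-1}),
\end{equation*}
where, under the Riemann hypothesis, every nontrivial zero is $\rho=\tfrac12+i\gamma$ with $\gamma\in\RR$. A termwise bound $\bigl|\bigl((x+L)^{\rho}-x^{\rho}\bigr)/\rho\bigr|\leq\min\bigl(L/\sqrt x,\,\sqrt x/|\gamma|\bigr)$ is not enough, since $\sum_\gamma 1/|\gamma|$ diverges; the cancellation carried by the phases $x^{i\gamma}$ must be exploited. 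Equivalently, one must not truncate the zero sum sharply at a finite height: doing so (the classical route of Cram\'er, Goldston and Dudek~\cite{Go,Du}) yields a larger constant, and the whole purpose of the smoothing below is to bring it down to $22/25$. So one replaces $\mathbf 1_{[x,x+L]}$ by a well-chosen minorant $\ell\leq\mathbf 1_{[x,x+L]}$ whose Fourier transform has compact support, so that only the zeros with $|\gamma|$ below an explicit cutoff $\asymp\sqrt x/\log x$ survive in the zero sum; this is now a finite sum, but over $\asymp\sqrt x$ zeros and of size comparable to the main term $L$, so it must be estimated carefully. Using a positivity property of $\ell$ spelled out below, it is majorized by an integral against the zero density, for which the Riemann--von Mangoldt count of \Cref{deltazerocount} is tailor-made; the archimedean contribution from $\Gamma'/\Gamma$ is controlled by \Cref{gammabound}.

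The heart of the matter, and the step I expect to be the main obstacle, is the construction of $\ell$, which is governed by an extremal problem of Beurling--Selberg type. For a prescribed bandwidth $\delta$ one seeks a minorant $\ell$ of an interval indicator with $\widehat\ell$ supported in $[-\delta,\delta]$ and, decisively, $\widehat\ell\geq 0$ (the positivity that lets one pass, via the triangle inequality, from the zero sum to $\widehat\ell(0)$ times the zero-density integral), together with the dual majorant; the quantity to be minimized is the ratio of that density integral to $\widehat\ell(0)$, and its extremal value is exactly what fixes the numerical constant, with $\delta$ being traded against the admissible interval length. Carneiro, Milinovich and Soundararajan solve this optimization sharply, via a Gaussian-subordination construction, and, inserting the extremal functions into the explicit formula and collecting all lower-order terms, they obtain $\psi(x+L)-\psi(x)>0$ for $L=\tfrac{22}{25}\sqrt x\log x$ and every sufficiently large $x$. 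Making all constants effective then yields a concrete $x_0$, and combining with the finite verification for $4\leq x\leq x_0$ completes the proof for all $x\geq 4$.
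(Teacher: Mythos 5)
The first thing to say is that the paper does not prove \Cref{cmstheorem} at all: it is imported verbatim as Theorem~1.5 of Carneiro--Milinovich--Soundararajan \cite{CMS} and used as a black box, so there is no internal proof to compare yours against. Your proposal is, in effect, a description of the architecture of the proof in \cite{CMS} (explicit formula plus a band-limited Beurling--Selberg-type minorant obtained by Gaussian subordination), and as a description it is broadly accurate. But as a proof it has a genuine gap: the entire content of the theorem --- the construction and optimality of the extremal functions, the bookkeeping of the archimedean and zero-sum terms with explicit constants, the emergence of the numerical value $22/25$, and the effective threshold $x_0$ --- is exactly the part you defer (``I indicate only the architecture, as the numerics are delicate''). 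Nothing in the present paper's toolkit supplies these steps: \Cref{deltazerocount} and \Cref{gammabound} are far too soft to recover a constant of the quality $22/25$, and the sharp-truncation route you correctly dismiss is the only one developed here. So the proposal amounts to a citation with commentary rather than an argument; that is consistent with how the paper itself handles the statement, but it does not constitute a proof.

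One concrete misstep in the part you do spell out: you propose to settle the residual range $4\leq x\leq x_0$ by combining prime gap tables with \Cref{dudeksharpened}. That theorem is only stated for $x>10^{18}$ (and with the zero sum left unevaluated it gives no information about short intervals anyway), so it cannot play this role. The finite range is handled purely by computation: the maximal prime gap below $4\cdot 10^{18}$ is $1476$ \cite{OHP}, which is minuscule compared with $\frac{22}{25}\sqrt{x}\log x$ already for moderate $x$, and the very small values of $x$ are checked directly; no analytic input is needed there. Also note that reducing to $\psi(x+L)-\psi(x)>0$ is not quite enough --- you need the increment to exceed the $O(\log^2 x)$ contribution of higher prime powers --- though at sketch level this is easily repaired since the method produces a lower bound of order $L$.
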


In a restricted range, we have a stronger unconditional result thanks to explicit calculations.

\begin{lemma}\label{legendrevariant}
For any $x\in[117,10^{18}]$, there is a prime number in $[x,x+\sqrt{x}]$.
\end{lemma}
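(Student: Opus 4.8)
The plan is to reduce the statement to a finite verification built on the classical table of \emph{maximal prime gaps} (the first occurrence of each record gap size), which has been computed and checked far beyond $10^{18}$ — in particular it is consistent with the prime gap records up to $2\cdot10^{18}$ referred to above.

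\textbf{Reduction to consecutive primes.} Fix $x\in[117,10^{18}]$; we may assume $x$ is composite, and let $p<x<q$ be consecutive primes. Then $[x,x+\sqrt x]$ contains a prime if and only if $q\le x+\sqrt x$, i.e.\ $q-x\le\sqrt x$. Since $q-x$ is decreasing and $\sqrt x$ is increasing in $x$, it suffices to treat the smallest admissible value $x_0:=\max(117,p+1)$. If $p\ge 127$ this is $x_0=p+1$, so it is enough to prove
\[
q-p\ \le\ \sqrt{p+1}+1\qquad\text{for every pair of consecutive primes with }127\le p\le 10^{18}.
\]
The only consecutive pair with $p\le 113$ whose open interval meets $[117,10^{18}]$ is $(p,q)=(113,127)$, and there $q-x_0=127-117=10\le\sqrt{117}$, so that case is done by hand.

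\textbf{The inequality for large and small $p$.} Set $p_\ast:=1441^2=2\,076\,481$. For $p\ge p_\ast$ I would invoke the standard computational fact that the largest gap between consecutive primes below $1.4\cdot10^{18}$ is $1442$ (the first gap of size $1476$ occurs only at $1\,425\,172\,824\,437\,699\,411>10^{18}$, and a larger value cannot occur earlier without itself being a record); hence $q-p\le 1442<\sqrt{p_\ast+1}+1\le\sqrt{p+1}+1$. For $127\le p<p_\ast$ I would use the list of record gaps $(P_1,g_1),(P_2,g_2),\dots$ with $g_1<g_2<\cdots$: any prime $p$ lies in a unique interval $[P_j,P_{j+1})$, where $q-p\le g_j$, so the bound holds throughout $[P_j,P_{j+1})$ once $(g_j-1)^2\le P_j$. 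One checks this for the finitely many records with $P_j<p_\ast$; it holds in every case except $(P_j,g_j)=(113,14)$, which governs $113\le p<523$. For that record $(g_j-1)^2=169\le p$ already when $p\ge 173$, so only the handful of primes $p\in\{127,131,\dots,167\}$ remain, and for each the gap is at most $14$ while $\sqrt{p+1}+1>12$, so they are verified individually.

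\textbf{Main obstacle.} There is no analytic content here; the proof is elementary case-checking. The only genuinely external ingredients are the completeness of the maximal-prime-gap table below $10^{18}$ and the value of the first occurrence of the gap $1476$, both well documented in the literature on prime gap records; the rest is a short list of inequalities $(g_j-1)^2\le P_j$ over the record gaps and about ten checks for small primes, which I would display explicitly so the argument is self-contained modulo those standard computations.
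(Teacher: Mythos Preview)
There is a genuine gap in your reduction step. The lemma is stated for \emph{real} $x\in[117,10^{18}]$, and the paper's application actually needs non-integer values (to deduce $p_{n+1}-p_n\le\sqrt{p_n}$ one takes $x$ just above $p_n$). When you write ``we may assume $x$ is composite'' and set $x_0=\max(117,p+1)$, you have tacitly restricted to integer $x$: for real $x\in(p,p+1)$ the infimum of the admissible range is $p$, not $p+1$, and the limiting requirement becomes $q-p\le\sqrt{p}$, strictly stronger than your $q-p\le\sqrt{p+1}+1$. A pair with $\sqrt{p}<q-p\le\sqrt{p+1}+1$ would pass your test yet make the lemma fail for $x$ slightly larger than $p$. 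The repair is immediate---replace $\sqrt{p+1}+1$ by $\sqrt{p}$, the check $(g_j-1)^2\le P_j$ by $g_j^2\le P_j$, and $p_\ast=1441^2$ by $1442^2$---and all the record-gap verifications still go through (the only failing record remains $(113,14)$, and the primes $127\le p\le 167$ all have gap at most $10<\sqrt{127}$).

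Modulo this fix, your two-range split is a valid route but differs from the paper's argument. The paper proceeds by contradiction: a failure at $x$ forces $p_\ell<x<(p_{\ell+1}-p_\ell)^2$, and then the record table from \cite{OHP} is applied \emph{iteratively} to shrink the upper bound on $p_\ell$ through $1442^2$, $148^2$, $52^2$, $34^2$, $22^2$, $14^2$, landing in the range $31\le\ell\le 44$ where every gap is at most $10$, so $x<100$ contradicts $x\ge 117$. This bootstrap replaces your global cutoff $p_\ast$ and list of record checks by a short descending chain of six lookups; your version is more explicit about which inequalities are being verified, while the paper's is terser and avoids singling out any threshold.
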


\begin{proof} Assume that the conclusion fails for some $x\in[117,10^{18}]$.
    Then there is a unique prime number $p_\ell$ such that
    $p_\ell<x<x+\sqrt{x}<p_{\ell+1}$. In particular, $\ell\geq 31$ and
    $p_\ell<x<{(p_{\ell+1}-p_\ell)}^2$. Hence the computations of Oliveira e
    Silva, Herzog, and Pardi~\cite[Table~8]{OHP} show that the initial upper
    bound ${10}^{18}$ for $p_\ell$ successively improves to: $1442^2$, $148^2$,
    $52^2$, $34^2$, $22^2$, $14^2$. This means that $31\leq \ell\leq 44$, but
    then $x<{(p_{\ell+1}-p_\ell)}^2\leq 10^2$ is a contradiction.
\end{proof}

\begin{remark} The conclusion of \Cref{legendrevariant} is likely true for all $x\geq 117$. However, this statement is not known to follow from the Riemann hypothesis, and it is stronger than Oppermann's conjecture (which itself implies Legendre's conjecture, Andrica's conjecture, and Brocard's conjecture).
\end{remark}

\begin{theorem}[Heath-Brown {\cite{HB1}}]\label{hbtheorem}
For any $x\geq 2$ we have
\[\sum_{p_\ell\leq x}(p_{\ell+1}-p_\ell)^2\ll x^{4/3}(\log x)^{10000}.\]
\end{theorem}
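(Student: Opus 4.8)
This is Heath--Brown's theorem~\cite{HB1}, and my plan is to reconstruct its proof along the classical route: reduce the second moment of prime gaps to a count of long prime-free intervals, then bound that count by an unconditional mean-square estimate for $\psi$ in short intervals, the latter coming from the truncated von Mangoldt formula and a zero-density estimate. Throughout put $h_\ell=p_{\ell+1}-p_\ell$ and
\[E(x,H):=\#\{\ell:\ p_\ell\le x,\ h_\ell>H\}.\]
Since the primes up to $x$ are $p_1,\dots,p_{\pi(x)}$ and $p_{\pi(x)+1}\ll x$, one has the trivial bound $E(x,H)\le H^{-1}\sum_{p_\ell\le x}h_\ell\ll x/H$, and $E(x,H)=0$ once $H\ge x$ (indeed once $H\gg x^{5/8+\eps}$ by \Cref{inghamtheorem}).

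\emph{Step 1 (reduction to long gaps).} First I split the sum at $H_0=x^{1/3}$. The gaps below $H_0$ contribute at most $H_0\sum_{p_\ell\le x}h_\ell\ll x^{4/3}$. For the gaps above $H_0$ the layer-cake identity gives
\[\sum_{\substack{p_\ell\le x\\ h_\ell>H_0}}h_\ell^{\,2}=H_0^{\,2}\,E(x,H_0)+\int_{H_0}^{\infty}2H\,E(x,H)\,dH\ll x^{4/3}+\int_{x^{1/3}}^{x}H\,E(x,H)\,dH,\]
using the trivial bound for the first term and $E(x,H)=0$ for $H\ge x$ for the last. So everything reduces to showing $\int_{x^{1/3}}^{x}H\,E(x,H)\,dH\ll x^{4/3}(\log x)^{O(1)}$, and since $E(x,H)=0$ for $H\gg x^{5/8+\eps}$ the effective range is $x^{1/3}\le H\ll x^{5/8+\eps}$, of logarithmic length.

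\emph{Step 2 (long gaps versus oscillations of $\psi$).} If $p_\ell\le x$ and $h_\ell>H$, then for every $t\in[p_\ell,p_\ell+H/2]$ the interval $(t,t+H/2]$ lies inside the gap $(p_\ell,p_{\ell+1})$ and so contains no prime; these windows (each of length $H/2$) are pairwise disjoint as $\ell$ varies. A prime-free interval $(t,t+v]$ with $t\ll x$ and $x^{1/3}\le v$ lies in a prime gap of length $>v$, hence (since a prime gap never exceeds its left endpoint) its left endpoint exceeds $v$, so $t>v$ and the proper prime powers in $(t,t+v]$ contribute only $O(\sqrt v\,\log x)\le v/2$ to $\psi$; thus $|\psi(t+v)-\psi(t)-v|\ge v/2$ on each window. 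Chebyshev's inequality now yields
\[\tfrac{H}{2}\,E(x,H)\le\operatorname{meas}\{t\le 3x:\ (t,t+\tfrac H2]\text{ prime-free}\}\ll H^{-2}\int_0^{3x}\bigl(\psi(t+\tfrac H2)-\psi(t)-\tfrac H2\bigr)^2\,dt,\]
so that $E(x,H)\ll H^{-3}\int_0^{3x}\bigl(\psi(t+\tfrac H2)-\psi(t)-\tfrac H2\bigr)^2\,dt$ throughout the range of Step~1.

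\emph{Step 3 (the mean square of $\psi$ in short intervals --- the hard part), and conclusion.} It remains to establish the unconditional bound
\[\int_0^{3x}\bigl(\psi(t+v)-\psi(t)-v\bigr)^2\,dt\ll x^{4/3}\,v\,(\log x)^{C}\qquad(x^{1/3}\le v\le x)\]
for an absolute constant $C$. Here I would insert the truncated formula $\psi(y)=y-\sum_{|\Im\rho|\le T}y^{\rho}/\rho+O(yT^{-1}\log^2(yT))$, so that $\psi(t+v)-\psi(t)-v=-\sum_{|\Im\rho|\le T}\rho^{-1}\{(t+v)^{\rho}-t^{\rho}\}+O(xT^{-1}\log^2x)$, then square and integrate in $t$; writing $(t+v)^{\rho}-t^{\rho}=\rho\int_t^{t+v}u^{\rho-1}\,du$, the zero-sum is handled via a mean-value estimate for $\int_0^{3x}\bigl|\sum_{|\Im\rho|\le T}u^{\rho-1}\bigr|^2\,du$, which after the diagonal/off-diagonal split is controlled by partial summation against a zero-density estimate $N(\sigma,T)\ll T^{A(1-\sigma)}(\log T)^{B}$ of Ingham--Huxley type, with $T$ chosen as the optimal power of $x$. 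This optimization --- balancing the density exponent against the explicit-formula error --- is exactly what produces the exponent $4/3$ and some fixed power of $\log x$ (far below $10000$); it is the only genuinely analytic ingredient and the main obstacle. Granting it, Step~2 gives $E(x,H)\ll x^{4/3}H^{-2}(\log x)^{C}$ on $x^{1/3}\le H\ll x^{5/8+\eps}$, whence $\int_{x^{1/3}}^{x}H\,E(x,H)\,dH\ll x^{4/3}(\log x)^{C}\int_{x^{1/3}}^{x}H^{-1}\,dH\ll x^{4/3}(\log x)^{C+1}$, and combining with Step~1 yields $\sum_{p_\ell\le x}(p_{\ell+1}-p_\ell)^2\ll x^{4/3}(\log x)^{C+1}$ for all large $x$, the bounded-$x$ case being trivial.
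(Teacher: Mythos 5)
Your Steps 1 and 2 are correct and standard: the layer-cake reduction, the disjoint half-gap windows, and the removal of proper prime powers are exactly the kind of argument this paper itself runs (under RH) in its Section on the sum of large prime gaps. But note that the paper does not prove \Cref{hbtheorem} at all --- it quotes it from \cite{HB1} --- so the entire content of your proposal is Step 3, and that step has a genuine gap: the mean-square bound $\int_0^{3x}\bigl(\psi(t+v)-\psi(t)-v\bigr)^2dt\ll x^{4/3}v(\log x)^C$ for $x^{1/3}\le v\le x$ neither follows from the route you sketch nor is it known unconditionally. After inserting the truncated explicit formula and squaring, the square-and-integrate computation yields (diagonal terms dominating) a bound of the shape
\[
\int_x^{2x}\bigl(\psi(t+v)-\psi(t)-v\bigr)^2dt\ \ll\ v^2x\,(\log x)^{O(1)}\max_{1/2\le\sigma\le 1}x^{2\sigma-2}N(\sigma,T),\qquad T\asymp x/v,
\]
and no density estimate $N(\sigma,T)\ll T^{A(1-\sigma)}(\log T)^{B}$ of Ingham--Huxley type can extract a power saving from this, because $x^{2\sigma-2}T^{A(1-\sigma)}\to 1$ as $\sigma\to 1$; near $\sigma=1$ one only has the Vinogradov--Korobov zero-free region, which saves a factor $\exp\bigl(-c(\log x)^{1/3}\bigr)$ over the trivial $v^2x$, not the power $x^{\theta-1/3}$ that your target requires at $v=x^{\theta}$, $1/3<\theta\le 5/8$. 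Concretely, a single hypothetical zero $\rho_0=\beta_0+i\gamma_0$ at large height with $\beta_0>\tfrac{7}{6}-\tfrac{\theta}{2}$ (about $0.854$ at $\theta=5/8$), which no known result excludes, already makes the diagonal term $v^2x\,x^{2\beta_0-2}$ in your upper bound exceed $x^{4/3}v$; so no choice of ``$T$ an optimal power of $x$'' rescues the argument. Your scheme is exactly Selberg's RH-conditional argument (where all zeros have $\beta=1/2$ and the $L^2$ bound $\ll vx\log^2x$ does hold), but unconditionally the $L^2$ estimate over the whole interval is precisely what one cannot prove.

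Heath-Brown's theorem is obtained by proving something weaker than your Step 3 in a genuinely different way: one only needs to bound the measure of the sparse exceptional set of $t\in[x,2x]$ for which $(t,t+v]$ is prime-free (equivalently, the first moment of gaps exceeding each dyadic scale $v\in[x^{1/3},x^{5/8}]$), and the power saving is extracted by exploiting that sparsity itself --- selecting well-spaced points of the exceptional set, detecting the zeros (or Dirichlet polynomials) that must be large at those points, and applying large-values/Hal\'asz--Montgomery--Huxley technology in combination with Huxley's zero-density theorem. Your plain Chebyshev step discards exactly this leverage, which is why the sketch reduces to an estimate that is out of reach; the same structural point underlies the later improvements \cite{HB2,Pe,Ma,St}. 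As it stands, the proposal proves the reduction but not the theorem.
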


\begin{remark} The exponent $4/3$ was improved to $23/18+\eps$ by Heath-Brown~\cite{HB2}, and to $5/4+\eps$ independently by Peck~\cite{Pe} and Maynard~\cite{Ma}. The current record $123/100+\eps$ is due to Stadlmann~\cite{St}. We have emphasized the original breakthrough of Heath-Brown~\cite{HB1} as it suffices for our purposes.
\end{remark}

\section{Proof of the main theorem}

In this section, we first prove \Cref{thm:prime-gap-graphicness} assuming \Cref{thm1}, and then we prove \Cref{thm:prime-gap-graphicness-under-RH} assuming \Cref{explicitselberg}. In other words, we deduce \Cref{maintheorem} from Theorems~\ref{thm1} and \ref{explicitselberg}.

\subsection{Proof of \Cref{thm:prime-gap-graphicness}}

Let $n$ be sufficiently large. We shall verify the conditions of \Cref{thm1} for
\[p:=2,\qquad d_\ell:=p_\ell-p_{\ell-1},\qquad d:=p_{n+1}-p_n.\]
Clearly,
${\|\D\|}_1=p_n-1$ is even. Condition \eqref{eq:p-norm-1} reads
\begin{equation}\label{eq:p-norm-3}
\sum_{\ell=1}^n (2+p_\ell-p_{\ell-1})^2\leq n^{3/2},
\end{equation}
while condition \eqref{eq:p-norm-2} reads
\begin{equation}\label{eq:p-norm-4}
16(p_{n+1}-p_n)\sum_{\ell=1}^n (p_\ell-p_{\ell-1})^2\leq(p_n-1)^2.
\end{equation}
Now \eqref{eq:p-norm-3} and \eqref{eq:p-norm-4} follow from
Theorems~\ref{inghamtheorem} and \ref{hbtheorem}, hence we are done:
\begin{gather*}
\sum_{\ell=1}^n (2+p_\ell-p_{\ell-1})^2\leq 9\sum_{\ell=1}^n (p_\ell-p_{\ell-1})^2\leq p_n^{4/3+o(1)}=n^{4/3+o(1)},\\
16(p_{n+1}-p_n)\sum_{\ell=1}^n (p_\ell-p_{\ell-1})^2\leq p_n^{5/8+o(1)}p_n^{4/3+o(1)}=p_n^{47/24+o(1)}.
\end{gather*}

\subsection{Proof of \Cref{thm:prime-gap-graphicness-under-RH}}

Assume the Riemann hypothesis, and let $G$ be a prime gap graph on $n$ vertices. It suffices to show that $G$ has $(p_{n+1}-p_n)/2$ independent edges (cf.\ \Cref{conj2}), because then a straightforward induction argument based on \Cref{tm:weak} shows that
every initial segment of $\PD$ is graphic (cf.\ \Cref{conj1}). The statement is clear for $2\leq n\leq 4$, hence we shall restrict to $n\geq 5$.

\medskip

By \Cref{thm2}, it suffices to exhibit an integer $N\geq 1$ satisfying
\begin{equation}\label{delta}
N(p_{n+1}-p_n)+2\!\!\!\!\!\!\sum_{\substack{1\leq\ell\leq n\\p_\ell-p_{\ell-1}\geq N}}(p_\ell-p_{\ell-1})< p_n.
\end{equation}
For $p_n<10^{18}$ we take
\[N:=\max_{1\leq\ell\leq n}(1+p_\ell-p_{\ell-1}),\]
so that \eqref{delta} simplifies to $N(p_{n+1}-p_n)<p_n$. In fact the proof of \Cref{thm2} reveals that the last condition can be relaxed to
\begin{equation}\label{delta2}
\frac{p_{n+1}-p_n}{2}\leq\left\lceil\frac{p_n-1}{2 N}\right\rceil,
\end{equation}
which works better for very small $n\geq 5$. For $5\leq n\leq 44$ the condition \eqref{delta2} can be checked by a simple computer program (or by hand). For $n\geq 45$ and $p_n<10^{18}$ we verify \eqref{delta2} as follows. Let $k$ be the unique positive integer satisfying $(k-1)^2<p_n<k^2$. Note that $k\geq 15$, because $p_n\geq p_{45}=197$. From \Cref{legendrevariant} it follows that $p_{n+1}-p_n\leq k-1$ and
\[N=\max\left(15,\max_{32\leq\ell\leq n}(1+p_\ell-p_{\ell-1})\right)\leq k,\]
hence also that
\[\frac{p_n-1}{2N}>\frac{k^2-2k}{2N}\geq\frac{k^2-2k}{2k}=\frac{k}{2}-1.\]
Therefore, \eqref{delta2} is clear by
\[\frac{p_{n+1}-p_n}{2}\leq\frac{k-1}{2}\leq\left\lceil\frac{p_n-1}{2N}\right\rceil.\]

For $p_n>10^{18}$ we take
\[N:=\left\lceil\frac{\sqrt{p_n}}{3\log p_n}\right\rceil.\]
Then, by Theorems~\ref{cmstheorem} and \ref{explicitselberg}, we have
\[N(p_{n+1}-p_n)<\frac{p_n}{3}\qquad\text{and}\qquad
\sum_{\substack{1\leq\ell\leq n\\p_\ell-p_{\ell-1}\geq N}}(p_\ell-p_{\ell-1})<489\sqrt{p_n}\log^3 p_n<\frac{p_n}{3}.\]
From here the bound \eqref{delta} is immediate, hence we are done.

\section{A symmetric criterion for graphicality}\label{sec:gen}

In this section, we prove \Cref{thm1}.

\medskip

\textbf{Part (a).} By symmetry, we can assume that $d_1\geq\dotsb\geq d_n$. By \Cref{tm:EG}, it suffices to check that for any $1\leq k\leq n$,
\[
\sum_{\ell=1}^k d_\ell \leq k(k-1) + \sum_{\ell=k+1}^n \min(k,d_\ell).
\]
Since $d_\ell\geq 1$ for any $1\leq\ell\leq n$, it suffices to prove that
\[
\sum_{\ell=1}^k d_\ell \leq k(k-1) + (n-k),
\]
which is equivalent to
\[
\sum_{\ell=1}^k (2+d_\ell) \leq k^2+n.
\]
This last condition follows from \eqref{eq:p-norm-1} and H\"older's inequality, hence we are done:
\[{\|2+\D^k\|}_1 \leq k^{1-\frac{1}{p}}{\|2+\D^k\|}_p\leq k^{1-\frac{1}{p}}{\|2+\D\|}_p
\leq k^{1-\frac{1}{p}} n^{\frac{1}{2}+\frac{1}{2p}}\leq\max(k^2,n).\]
In the last step, we used that both $k^2$ and $n$ are upper bounded by $\max(k^2,n)$.

\medskip

\textbf{Part (b).} By \Cref{tm:weak} and \Cref{thm2}, it suffices to verify that \eqref{eq:DPG-good} holds for some integer $\delta\geq 1$. If $p=\infty$, then \eqref{eq:p-norm-2} says that $4d{\|\D\|}_\infty\leq{\|\D\|}_1$, hence \eqref{eq:DPG-good} holds for $\delta:=1+{\|\D\|}_\infty$. So let us focus on the case $1<p<\infty$. For any integer $\delta\geq 1$, we have
\[{\|\D\|}_p^p\geq\sum_{d_\ell\geq\delta}d_\ell^p\geq\delta^{p-1}\sum_{d_\ell\geq\delta}d_\ell,\]
hence also
\[\sum_{d_\ell<\delta}d_\ell-\sum_{d_\ell\geq\delta}d_\ell\geq{\|\D\|}_1-2\delta^{1-p}{\|\D\|}_p^p.\]
So for the validity of \eqref{eq:DPG-good}, it suffices that
\[\delta^{1-p}{\|\D\|}_p^p\leq\frac{1}{4}{\|\D\|}_1\qquad\text{and}\qquad
\delta d\leq\frac{1}{2}{\|\D\|}_1.\]
In other words, it suffices to find an integer $\delta$ satisfying
\[\left(\frac{4{\|\D\|}_p^p}{{\|\D\|}_1}\right)^\frac{1}{p-1}\leq\delta\leq\frac{1}{2d}{\|\D\|}_1.\]
The left-hand side exceeds $1$, hence $\delta$ exists as long as
\[2\left(\frac{4{\|\D\|}_p^p}{{\|\D\|}_1}\right)^\frac{1}{p-1}\leq\frac{1}{2d}{\|\D\|}_1.\]
This is equivalent to condition \eqref{eq:p-norm-2}, hence the proof of \Cref{thm1} is complete.

\section{The sum of large prime gaps}

In this section, we prove \Cref{explicitselberg} assuming \Cref{dudeksharpened}. Throughout, we assume the Riemann hypothesis.

\medskip

First we eliminate some simple cases. Let $N^*$ denote the largest prime gap $p_{\ell+1}-p_\ell$ occurring in \eqref{HBeffective}. Then we can clearly assume that
\begin{equation}\label{assumption1}
N\leq N^*\qquad\text{and}\qquad 2x+N^*>\frac{163x\log^2 x}{N},
\end{equation}
hence also that $N^*(2x+N^*)>163x\log^2 x$. From \Cref{cmstheorem} it follows that $N^*<3\sqrt{x}\log x$, so our previous inequality yields $N^*>77\log^2 x$. By \cite[Table~8]{OHP}, this forces $x>10^{18}$. Indeed,
for $x\in[2,10^3]$ we have $N^*\leq 34$, while for $x\in[10^3,10^{18}]$ we have $N^*\leq 1476$. On the other hand, for $x>10^{18}$ we get from \Cref{cmstheorem} that $N^*<\frac{4}{3}\sqrt{x}\log x<0.001 x$, hence by \eqref{assumption1} also that
\begin{equation}\label{assumption2}
81\log^2 x<N<\frac{4}{3}\sqrt{x}\log x.
\end{equation}

From now on we assume both $x>10^{18}$ and \eqref{assumption2}. Following Heath-Brown~\cite{HB1}, we write
$N=4\delta x$ with
\[\frac{81\log^2 x}{4x}<\delta<\frac{\log x}{3\sqrt{x}},\]
and we set out to estimate the square mean of
\[E(y,\delta) := \psi(y+\delta y)-\psi(y)-\delta y,\qquad x\leq y\leq 2x.\]
It follows from \Cref{dudeksharpened} and the crude bound $y+\delta y<3x$ that
\[
|E(y,\delta)|<\left|\sum_{|\Im\rho|<3x} y^{\rho}C(\rho)\right| + \log^2 x,
\]
where
\[
C(\rho):=\frac{1-(1+\delta)^{\rho}}{\rho}.
\]
As a result,
\begin{equation}\label{eq:heath-brown-sum-over-zeros-CS}
\int_x^{2x} |E(y,\delta)|^2\, dy < 2\int_x^{2x}\left|\sum_{|\Im\rho|<3x} y^{\rho}C(\rho)\right|^2\,dy+2x\log^4 x.
\end{equation}
\begin{lemma}\label{Cbound} We have
\begin{equation}\label{eq:c-rho-bound-b}
|C(\rho)|<\min\left(\delta,\frac{\sqrt{4+4\delta}}{|\rho|}\right).
\end{equation}
\end{lemma}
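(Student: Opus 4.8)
The plan is to bound the numerator $1-(1+\delta)^\rho$ in two complementary ways, one good for small $|\rho|$ and one good for large $|\rho|$, using the Riemann hypothesis in the form $\Re\rho=\tfrac12$ together with the fact that $\delta>0$, and then take the minimum.

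For the first estimate $|C(\rho)|<\delta$, I would rewrite $C(\rho)$ as an integral. Since $\frac{d}{dt}t^\rho=\rho\,t^{\rho-1}$, integrating over $[1,1+\delta]$ gives $(1+\delta)^\rho-1=\rho\int_1^{1+\delta}t^{\rho-1}\,dt$, so $C(\rho)=-\int_1^{1+\delta}t^{\rho-1}\,dt$. Taking absolute values and using $|t^{\rho-1}|=t^{\Re\rho-1}$ for $t>0$, this yields $|C(\rho)|\leq\int_1^{1+\delta}t^{\Re\rho-1}\,dt$. As $\zeta$ has no zero on the line $\Re s=1$, we have $\Re\rho<1$, so the integrand is strictly below $1$ on $(1,1+\delta]$ and the integral is strictly below $\delta$; under the Riemann hypothesis one even gets the sharper value $|C(\rho)|\leq 2(\sqrt{1+\delta}-1)<\delta$.

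For the second estimate $|C(\rho)|<\sqrt{4+4\delta}/|\rho|$, I would instead apply the triangle inequality directly to the closed form: $|1-(1+\delta)^\rho|\leq 1+|(1+\delta)^\rho|=1+(1+\delta)^{\Re\rho}$. Here the Riemann hypothesis enters through $\Re\rho=\tfrac12$, which makes the right-hand side $1+\sqrt{1+\delta}$, and then $1+\sqrt{1+\delta}<2\sqrt{1+\delta}=\sqrt{4+4\delta}$ because $\sqrt{1+\delta}>1$ when $\delta>0$. Dividing by $|\rho|$ completes this case, and the lemma follows at once.

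The argument is elementary and I do not anticipate a genuine obstacle; the only points deserving care are that both inequalities must come out strict — which is automatic from $\Re\rho<1$ in the first case and from $\delta>0$ in the second — and that the Riemann hypothesis is genuinely needed for the second bound, since for a hypothetical zero with $\Re\rho$ close to $1$ the quantity $1+(1+\delta)^{\Re\rho}$ would exceed $\sqrt{4+4\delta}$.
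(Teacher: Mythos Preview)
Your proposal is correct and matches the paper's proof essentially line for line: the paper also obtains $|C(\rho)|<\delta$ from the integral representation $C(\rho)=\int_{1+\delta}^1 x^{\rho-1}\,dx$ with the triangle inequality, and obtains the second bound from $|C(\rho)|\leq (1+\sqrt{1+\delta})/|\rho|<\sqrt{4+4\delta}/|\rho|$. Your added remarks on strictness and on where the Riemann hypothesis enters are accurate and slightly more explicit than the paper's terse version.
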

\begin{proof}
The bound $|C(\rho)|<\delta$ is a consequence of the integral representation
\[C(\rho)=\int_{1+\delta}^1 x^{\rho-1}\,dx\]
and the triangle inequality for complex-valued Riemann integrals. In addition, the triangle inequality for complex numbers yields by the definition of $C(\rho)$ that
\[|C(\rho)|\leq\frac{1+\sqrt{1+\delta}}{|\rho|}<\frac{\sqrt{4+4\delta}}{|\rho|}.\qedhere\]
\end{proof}

We estimate the integral on the right-hand side of \eqref{eq:heath-brown-sum-over-zeros-CS} as in the proof of \cite[Lem.~5]{SV}:
\begin{align*}
\int_x^{2x}\left|\sum_{|\Im\rho|<3x} y^{\rho}C(\rho)\right|^2\,dy
&\leq\int_1^2\int_{xv/2}^{2xv}\left|\sum_{|\Im\rho|<3x} y^{\rho}C(\rho)\right|^2\,dy\,dv\\
&=\sum_{|\Im\rho|,|\Im\rho'|<3x}x^{2+\rho-\rho'}C(\rho)\ov{C(\rho')}\cdot
\frac{2^{2+\rho-\rho'}-2^{-2-\rho+\rho'}}{2+\rho-\rho'}\cdot
\frac{2^{3+\rho-\rho'}-1}{3+\rho-\rho'}\\
&\leq x^2\sum_{|\Im\rho|,|\Im\rho'|<3x}|C(\rho)C(\rho')|
\left|\frac{2^2+2^{-2}}{2+\rho-\rho'}\right|\left|\frac{2^3+1}{3+\rho-\rho'}\right|\\
&\leq x^2\sum_{|\Im\rho|,|\Im\rho'|<3x}|C(\rho)|^2
\left|\frac{2^2+2^{-2}}{2+\rho-\rho'}\right|\left|\frac{2^3+1}{3+\rho-\rho'}\right|.
\end{align*}
Here the contribution of $\Im\rho<0$ is the same as the contribution of $\Im\rho>0$. Therefore, applying \Cref{Cbound} along with the elementary inequality
\[|2+it|\cdot|3+it|\geq 6+t^2,\qquad t\in\RR,\]
we arrive at
\begin{equation}\label{pairsum}
\int_x^{2x}\left|\sum_{|\Im\rho|<3x} y^{\rho}C(\rho)\right|^2\,dy
< \frac{153}{2} x^2\sum_{0<\Im\rho<3x}\min\left(\delta^2,\frac{4+4\delta}{|\rho|^2}\right)
\sum_{|\Im\rho'|<3x}\frac{1}{6+|\rho-\rho'|^2}.
\end{equation}

We need to estimate the inner sum in \eqref{pairsum}. The idea is to drop the condition $|\Im\rho'|<3x$, and consider the full convergent series
\[\sum_{\rho'}\frac{1}{6+|\rho-\rho'|^2}
=\frac{1}{\sqrt{6}}\sum_{\rho'}\Re\frac{1}{\sqrt{6}+\rho-\rho'}
=\frac{1}{\sqrt{6}}\Re\frac{\xi'(\sqrt{6}+\rho)}{\xi(\sqrt{6}+\rho)}.\]
The first equality follows from the Riemann hypothesis, while the second equality follows from \cite[Cor.~10.14]{MV}.
Let us write $s:=\sqrt{6}+\rho$ for simplicity. Then, as in the proof of \cite[Cor.~10.14]{MV}, we have
\[\frac{\xi'(s)}{\xi(s)}=-\frac{1}{2}\log\pi+\frac{1}{s-1}+\frac{\zeta'(s)}{\zeta(s)}+\frac{1}{2}\frac{\Gamma'(s/2+1)}{\Gamma(s/2+1)}.\]
We take the real part of both sides, and apply \Cref{gammabound}:
\begin{align*}
\Re\frac{\xi'(s)}{\xi(s)}
<&-\frac{1}{2}\log\pi+\Re\frac{1}{s-1}-\frac{\zeta'(\Re s)}{\zeta(\Re s)}
+\frac{1}{2}\Re\frac{\Gamma'(s/2+1)}{\Gamma(s/2+1)}\\
<&-\frac{1}{2}\log\pi+\Re\frac{1}{s-1}-\Re\frac{1}{2s+4}-\frac{\zeta'(\Re s)}{\zeta(\Re s)}-\frac{1}{2\Re s+4}\\
&-\frac{1}{2}\frac{\Gamma'(\Re s/2+1)}{\Gamma(\Re s/2+1)}
+\frac{1}{2}\log(\Re s/2+1)+\frac{1}{2}\log|s/2+1|.
\end{align*}
Using that $\Re s=\sqrt{6}+1/2$ and $\Im s>14$, it is straightforward to check that $\Re\frac{1}{s-1}<\Re\frac{1}{2s+4}$, hence in fact
\[\Re\frac{\xi'(s)}{\xi(s)}
<0.181-\frac{1}{2}\log\pi+\frac{1}{2}\log|s/2+1|
<\frac{1}{4}+\frac{1}{2}\log\frac{\Im\rho}{2\pi}.\]
To sum up, we have proved that
\[\sum_{\rho'}\frac{1}{6+|\rho-\rho'|^2}<\frac{1}{2\sqrt{6}}\left(\frac{1}{2}+\log\frac{\Im\rho}{2\pi}\right).\]

Going back to \eqref{pairsum}, we conclude that
\[\int_x^{2x}\left|\sum_{|\Im\rho|<3x} y^{\rho}C(\rho)\right|^2\,dy
<15.616 x^2\sum_{0<\Im\rho<3x}\min\left(\delta^2,\frac{4}{|\rho|^2}\right)
\left(\frac{1}{2}+\log\frac{\Im\rho}{2\pi}\right).\]
In the sum on the right-hand side, we drop the condition $\Im\rho<3x$ and replace $|\rho|$ by $\Im\rho$. By \cite[Cor.~1, Lem.~5--6]{BPT}, the resulting bigger sum can be estimated as follows:
\begin{align*}
&<\delta^2N(2/\delta)\left(\frac{1}{2}+\log\frac{1}{\delta\pi}\right)+
\sum_{\Im\rho\geq 2/\delta}\left(\frac{2}{(\Im\rho)^2}+\frac{4}{(\Im\rho)^2}\log\frac{\Im\rho}{2\pi}\right)\\
&<\frac{\delta}{\pi}\left(\log\frac{1}{\delta\pi}\right)\left(\frac{1}{2}+\log\frac{1}{\delta\pi}\right)+
\frac{\delta}{2\pi}\log\frac{2}{\delta}+
\frac{\delta}{\pi}\left(\log^2\frac{2}{\delta}-\log\frac{2}{\delta}\right)\\
&<\frac{\delta}{\pi}\left(\log^2\frac{1}{\delta\pi}+\log^2\frac{2}{\delta}\right)
<\frac{2\delta}{\pi}\log^2 x.
\end{align*}
In the end, we get
\[\int_x^{2x}\left|\sum_{|\Im\rho|<3x} y^{\rho}C(\rho)\right|^2\,dy
<9.942\delta x^2\log^2 x.\]
Plugging this bound into \eqref{eq:heath-brown-sum-over-zeros-CS}, we conclude that
\begin{equation}
\int_x^{2x}\label{eq:heath-brown-lemma-explicit}
|E(y,\delta)|^2\, dy < 19.884\delta x^2\log^2 x + 2x\log^4 x < 19.983\delta x^2\log^2 x.
\end{equation}

Assume now that the prime $p_\ell\in [x,2x]$ satisfies $p_{\ell+1}-p_\ell\geq N$. There is at most one $p_\ell$ such that
$(p_\ell+p_{\ell+1})/2>2x$, so assume also that $(p_\ell+p_{\ell+1})/2\leq 2x$. Then, for any
\[y\in(p_\ell,(p_\ell+p_{\ell+1})/2)\subset(x,2x),\]
the interval
\[[y,y+\delta y]\subset[y,y+N/2]\subset(p_\ell,p_{\ell+1})\]
is free of primes, hence counting the possible higher prime powers in this interval, we get
\[\psi(y+\delta y)-\psi(y)\leq(1+\delta\sqrt{y}/2)\log_2(y+\delta y)<(2+\delta\sqrt{y})\log x<0.003\delta y.\]
That is, $|E(y,\delta)|>0.997\delta y$ holds on $(p_\ell,(p_\ell+p_{\ell+1})/2)$. Squaring and integrating, we get
\[\int_{p_\ell}^{(p_\ell+p_{\ell+1})/2}|E(y,\delta)|^2\,dy > 0.497\delta^2 x^2(p_{\ell+1}-p_\ell).\]
Summing over all such primes $p_\ell$, and using \eqref{eq:heath-brown-lemma-explicit} as well as \Cref{cmstheorem} for the possible single exceptional $p_\ell$, we obtain \eqref{HBeffective}:
\[\sum_{\substack{x\leq p_\ell\leq 2x\\p_{\ell+1}-p_\ell\geq N}} (p_{\ell+1}-p_\ell)
< \frac{4}{3}\sqrt{x}\log x+\frac{1}{0.497\delta^2 x^2}\int_x^{2x} |E(y,\delta)|^2\,dy < \frac{163x\log^2 x}{N}.\]

\section{The error term in the truncated von Mangoldt formula}

In this section, we prove \Cref{dudeksharpened}. We follow Davenport~\cite[\S 17]{Da} and Goldston~\cite{Go} with appropriate modifications.

\medskip

We assume first that $x\not\in\ZZ$. We choose $T\in(z-2,z)$ according to \Cref{zetalemma3}, and we also fix $c:=1+1/\log x$. We record the following approximation to the characteristic function of $(1,\infty)$:
\[\mathbf{1}_{y>1}=\frac{1}{2\pi i}\int_{c-iT}^{c+iT}\frac{y^s}{s}\,ds+
O^*\left(y^c\min\left(0.501,\frac{1}{\pi T|\log y|}\right)\right),\qquad y\in(0,1)\cup(1,\infty).\]
This formula follows by making explicit the calculation on \cite[pp.~105--106]{Da}. The constant $0.501$ follows by observing that the line $\Re s=c$ divides the circle $|s|=|c+iT|$ into two almost equal arcs, each of length less than $1.001\pi|c+iT|$. The constant $1/\pi$ arises as twice the size of the leading $1/(2\pi i)$. Applying the formula for $y=x/n$, multiplying by $\Lambda(n)$, and summing over $n\geq 1$, we get
\begin{equation}\label{psi1}
\psi(x)=
\frac{1}{2\pi i}\int_{c-iT}^{c+iT} \left(-\frac{\zeta'(s)}{\zeta(s)}\right) \frac{x^s}{s}\, ds
+O^*\left(\sum_{n=1}^\infty\left(\frac{x}{n}\right)^c\Lambda(n)
\min\left(0.501,\frac{1}{\pi T\left|\log\frac{x}{n}\right|}\right)\right).
\end{equation}

We shall abbreviate the integrand in \eqref{psi1} by $f(s)$, and estimate the error term by cutting the $n$-sum into four parts. Throughout, we keep in mind that $x/T<x/(x-2)$. As a preparation, we record the
elementary inequalities
\begin{equation}\label{psi2}
\left(\frac{x}{n}\right)^c\Lambda(n)\leq\left(\frac{x}{n}\right)^c\log n
=\frac{x}{n}\cdot\frac{e\log n}{e^{\log n/\log x}}\leq\frac{x}{n}\log x,
\end{equation}
\begin{equation}\label{psi3}
\left|\log\frac{x}{y}\right|=\int_{\min(x,y)}^{\max(x,y)}\frac{du}{u}
\geq\frac{|x-y|}{\max(x,y)},\qquad y>0.
\end{equation}
We also observe that the function
\[y\mapsto\frac{x-y}{y\log\frac{x}{y}},\qquad y>0,\]
is positive and decreasing (the function has a removable discontinuity at $y=x$). Indeed, writing $v:=\log\frac{x}{y}$, the claim is that $v\mapsto(e^v-1)/v$ is positive and increasing on $\RR$, which in turn follows from the fact that the exponential function is increasing and convex.

First we consider the $n$'s satisfying $1\leq|x-n|\leq\log x$. By \eqref{psi3} and the subsequent observation, in this range we have
\[0<\frac{x-n}{n\log\frac{x}{n}}\leq\frac{\log x}{(x-\log x)\log\frac{x}{x-\log x}}\leq\frac{x}{x-\log x}<1.001,\]
hence by \eqref{psi2} also
\begin{equation}\label{psi4}
\left(\frac{x}{n}\right)^c\frac{\Lambda(n)}{\left|\log\frac{x}{n}\right|}
\leq\frac{x\log x}{n\left|\log\frac{x}{n}\right|} < 1.001\frac{x\log x}{|x-n|}.
\end{equation}
So the corresponding $n$-subsum within \eqref{psi1} is at most
\begin{equation}\label{contr1}
1.001\frac{x\log x}{\pi T}\sum_{\substack{1\leq|x-n|\leq\log x\\\Lambda(n)\neq 0}}\frac{1}{|x-n|}
<0.638\log x\sum_{1\leq k\leq\log x}\frac{1}{k}<A(x),
\end{equation}
where
\[A(x):=0.638\log x\cdot(\log\log x+3/5).\]
Second, we consider the $n$'s satisfying $\log x<|x-n|\leq x/5$. In this range we have the following variant of \eqref{psi4} proved in the same way:
\[\left(\frac{x}{n}\right)^c\frac{\Lambda(n)}{\left|\log\frac{x}{n}\right|}
\leq\frac{x\log x}{n\left|\log\frac{x}{n}\right|}\leq\frac{1}{4\log\frac{5}{4}}\cdot\frac{x\log x}{|x-n|}.\]
So the corresponding $n$-subsum within \eqref{psi1} is at most
\begin{equation}\label{contr2}
\frac{1}{4\log\frac{5}{4}}\cdot\frac{x\log x}{\pi T}\sum_{\substack{\log x<|x-n|\leq x\\\Lambda(n)\neq 0}}\frac{1}{|x-n|},
\end{equation}
where we relaxed the summation condition for convenience. If $p(u)$ denotes the number of prime powers in $[x-u,x+u]$, then the last sum can be written as
\begin{equation}\label{contr3}
\sum_{\substack{\log x<|x-n|\leq x\\\Lambda(n)\neq 0}}\frac{1}{|x-n|}=\int_{\log x}^{x}\frac{dp(u)}{u}
=\left[\frac{p(u)}{u}\right]_{\log x}^{x}+\int_{\log x}^{x}\frac{p(u)}{u^2}\,du.
\end{equation}
Since $0\leq p(u)\leq 2u+1$, the first term on the right-hand side is less than $2.001$. By the Brun--Titchmarsh inequality in the form given by Montgomery and Vaughan~\cite[Th.~2]{MV2}, we also see that
\[p(u)<\frac{4u}{\log u}+\log_2(x)\left(1+\frac{2u}{\sqrt{2x}}\right),\qquad u\in[2,x].\]
Indeed, on the right-hand side, the first term upper bounds the number of primes in $[x-u,x+u]$, while the second term upper bounds the number of higher prime powers in $[x-u,x+u]$. In particular,
\begin{equation}\label{contr4}
\int_{\log x}^{x}\frac{p(u)}{u^2}\,du<
\int_{\log x}^{x}\left(\frac{4.001}{u\log u}+\frac{\log_2 x}{u^2}\right)\,du
<4.001\log\log x-3.818.
\end{equation}
We infer from \eqref{contr2}--\eqref{contr4} that the $n$'s satisfying $\log x<|x-n|\leq x/5$ contribute to \eqref{psi1} less than
\[B(x):=1.427\log x\cdot(\log\log x-2/5).\]
Now we turn to the $n$'s satisfying $|x-n|>x/5$. In this range we have $|\log (x/n)|>\log(6/5)$, hence by \Cref{zetalemma1} the corresponding $n$-subsum is at most
\[\sum_{|x-n|>x/5}\Lambda(n)\left(\frac{x}{n}\right)^c \frac{1}{\pi T\log(6/5)}
< \frac{ex}{\pi T\log(6/5)}\left|\frac{\zeta'(c)}{\zeta(c)}\right| < 4.746\log x.\]
Finally, there are two $n$'s satisfying $|x-n|<1$, and by \eqref{psi2} their contribution to \eqref{psi1} is less than
\[0.501\left(\frac{x}{x-1}+\frac{x}{x}\right)\log x < 1.003 \log x.\]
Collecting everything, we arrive at (with room to spare)
\begin{equation}\label{psi}
\psi(x) = \frac{1}{2\pi i}\int_{c-iT}^{c+iT} f +O^*(A(x)+B(x)+6\log x).
\end{equation}
We note here that $A(x)+B(x)$ is less than $2.1\log x\log\log x$.

On the other hand, the residue theorem combined with \eqref{eq:log-der-hline-high} and \Cref{zetalemma2}
shows that
\[\frac{1}{2\pi i}\left(\int_{c-iT}^{c+iT}f+\int_{c+iT}^{-\infty+iT}f+\int_{-\infty-iT}^{c-iT}f\right)
=x-\sum_{|\Im\rho|<T}\frac{x^\rho}{\rho}-\log(2\pi)-\frac{1}{2}\log(1-x^{-2}),\]
where each integral is over a directed line segment or half-line. We estimate the second integral with the help of \eqref{eq:log-der-hline-high} and \Cref{zetalemma2}:
\[\left|\int_{c+iT}^{-\infty+iT}f\right|
<\frac{\log^2 z + 20\log z}{z-2}\int_{-\infty}^{c}x^\sigma\,d\sigma<(\log x+20)\frac{ex}{x-2}.\]
The third integral obeys the same bound, hence we infer that
\[\frac{1}{2\pi i}\int_{c-iT}^{c+iT}f=x-\sum_{|\Im\rho|<T}\frac{x^\rho}{\rho}+O^*(\log x+20).\]
By \Cref{deltazerocount}, we can extend the $\rho$-sum to $|\Im\rho|<z$ at the cost of an error of $O^*(2\log x)$. Therefore, going back to \eqref{psi}, we conclude for $x\not\in\ZZ$ that
\[\psi(x)=x-\sum_{|\Im\rho|<z}\frac{x^\rho}{\rho}+O^*(A(x)+B(x)+9\log x+20).\]

Finally, if $x$ is an integer, then we make use of the following simple observation. For a fixed $z>10^{18}$, the $\rho$-sum on the right-hand side is continuous in $x\in(10^{18},z)$, while the left-hand side equals $\psi(x-)+\Lambda(x)$. Therefore, the previous formula is valid at $x$ with an extra error term of $O^*(\log x)$. We finish the proof of \Cref{dudeksharpened} by noting that
\[A(x)+B(x)+10\log x+20<5\log x\log\log x,\qquad x>10^{18}.\]

\section*{Acknowledgements} We thank the referee for valuable comments that helped us improve the exposition.

\section*{Conflict of interest and data availability statements}
On behalf of all authors, the corresponding author states that there is no conflict of interest.

Data sharing not applicable to this article as no datasets were generated or analysed during the current study.


\begin{thebibliography}{99}

\bibitem{BHP} R. C. Baker, G. Harman, J. Pintz, The difference between consecutive primes. II, {\sl Proc. London Math. Soc. (3)} \textbf{83} (2001), 532--562. \DOI{10.1112/plms/83.3.532}

\bibitem{BPT} R. P. Brent, D. J. Platt, T. S. Trudgian, The mean square of the error term in the prime number theorem, {\sl J. Number Theory} \textbf{238} (2022), 740--762. \DOI{10.1016/j.jnt.2021.09.016}

\bibitem{CMS} E. Carneiro, M. B. Milinovich, K. Soundararajan, Fourier optimization and prime gaps, {\sl Comment. Math. Helv.} \textbf{94} (2019), 533--568. \DOI{10.4171/CMH/467}

\bibitem{Da} H. Davenport, Multiplicative number theory, 3rd ed., Revised and with a preface by H. L. Montgomery, Graduate Texts in Mathematics, Vol. 74, Springer-Verlag, New York, 2000. \ISBN{978-0387950976}

\bibitem{De} H. Delange, Une remarque sur la d\'eriv\'ee logarithmique de la fonction z\^eta de Riemann [A remark on the logarithmic derivative of the Riemann zeta function], {\sl Colloq. Math.} \textbf{53} (1987), 333--335. (in French) \DOI{10.4064/cm-53-2-333-335}

\bibitem{Du} A. W. Dudek, An explicit result for primes between cubes, {\sl  Funct. Approx. Comment. Math.} \textbf{55} (2016), 177--197. \DOI{10.7169/facm/2016.55.2.3}

\bibitem{EG60} P. Erd\H{o}s, T. Gallai, Gr\'afok el\H{o}\'\i rt fok\'u pontokkal (Graphs with vertices of prescribed degrees), {\sl Mat. Lapok} \textbf{11} (1960), 264--274. (in Hungarian)

\bibitem{Degree-math} P. L. Erd\H{o}s, S. Kharel, T. R. Mezei, Z. Toroczkai, Degree preserving graph dynamics -- a versatile process to construct random networks, submitted, \arXiv{2111.11994} (2021), 21 pages

\bibitem{FGKMT} K. Ford, B. Green, S. Konyagin, J. Maynard, T. Tao, Long gaps between primes, {\sl J. Amer. Math. Soc.}, \textbf{31} (2017), 65--105. \DOI{10.1090/jams/876}

\bibitem{Go}{D. A. Goldston}, On a result of Littlewood concerning prime numbers II, {\sl Acta Arith.} \textbf{43} (1983), 49--51. \link{https://eudml.org/doc/205889}

\bibitem{Hak} S. L. Hakimi, On realizability of a set of integers as degrees of the vertices of a linear graph. I, {\sl J. Soc. Indust. Appl. Math.} \textbf{10} (1962), 496--506. \link{https://www.jstor.org/stable/2098746}

\bibitem{Hav} V. Havel, Pozn\'amka o existenci kone\v cn\'ych graf\r u (A remark on the existence of finite graphs), {\sl \v Casopis P\v est. Mat.} \textbf{80} (1955), 477--480. (in Czech)

\bibitem{HB1} D. R. Heath-Brown, The differences between consecutive primes, {\sl J. London Math. Soc. (2)} \textbf{18} (1978), 7--13. \DOI{10.1112/jlms/s2-18.1.7}

\bibitem{HB2} D. R. Heath-Brown, The differences between consecutive primes. III, {\sl J. London Math. Soc. (2)} \textbf{20} (1979), 177--178. \DOI{10.1112/jlms/s2-20.2.177}

\bibitem{I1} A. E. Ingham, On the difference between consecutive primes, {\sl Quart. J. Math. Oxford Ser.} \textbf{8} (1937), 255--266. \DOI{10.1093/qmath/os-8.1.255}

\bibitem{DPG21} S. Kharel, T. R. Mezei, S. Chung, P. L. Erd\H{o}s, Z. Toroczkai, Degree-preserving network growth, {\sl Nature Physics} \textbf{18} (2022), 100--106. \DOI{10.1038/s41567-021-01417-7}

\bibitem{Ma} J. Maynard, On the difference between consecutive primes, \arXiv{1201.1787} (2012), 41 pages

\bibitem{Matching} T. R. Mezei, S. Kharel, P. L. Erd\H{o}s, Z. Toroczkai, On graphic degree sequences and matching numbers, submitted, \arXiv{2204.07423} (2022), 12 pages

\bibitem{MV2} H. L. Montgomery, R. C. Vaughan, The large sieve, {\sl Mathematika} \textbf{20} (1973), 119--134. \DOI{10.1112/S0025579300004708}

\bibitem{MV} H. L. Montgomery, R. C. Vaughan, Multiplicative number theory. I. Classical theory, Cambridge Studies in Advanced Mathematics, Vol. 97, Cambridge University Press, Cambridge, 2007. \DOI{10.1017/CBO9780511618314}

\bibitem{OHP} T. Oliveira e Silva, S. Herzog, S. Pardi, Empirical verification of the even Goldbach conjecture and computation of prime gaps up to $4\cdot 10^{18}$, {\sl Math. Comp.} \textbf{83} (2014), 2033--2060. \DOI{10.1090/S0025-5718-2013-02787-1}

\bibitem{Pe} A. S. Peck, On the differences between consecutive primes, PhD thesis, University of Oxford, 1996.

\bibitem{pittel} B. Pittel, Confirming two conjectures about the integer partitions, {\sl  J. Combin. Theory Ser. A} \textbf{88} (1999), 123--135. \DOI{10.1006/jcta.1999.2986}

\bibitem{SV} B. Saffari, R. C. Vaughan, On the fractional parts of $x/n$ and related sequences. II., {\sl Ann. Inst. Fourier (Grenoble)} \textbf{27} (1977), 1--30. \DOI{10.5802/aif.649}

\bibitem{Se} A. Selberg, On the normal density of primes in small intervals, and the difference between consecutive primes, {\sl Arch. Math. Naturvid.} \textbf{47} (1943), 87--105.

\bibitem{St} J. Stadlmann, On the mean square gap between primes, \arXiv{2212.10867} (2022), 71 pages

\bibitem{vizing} V. G. Vizing,  On an estimate of the chromatic class of a $p$-graph. {\sl Diskret. Analiz.} {\bf 3} (1964),  25--30.

\bibitem{WW} E. T. Whittaker, G. N. Watson, A course of modern analysis, 5th ed., Cambridge University Press, Cambridge, 2021. \DOI{10.1017/9781009004091}

\end{thebibliography}
\end{document}